\documentclass[12pt, a4paper]{amsart}
\usepackage{amsmath,amsthm,amssymb,amscd}
\usepackage[shortalphabetic]{amsrefs}
\usepackage[all]{xy}
\usepackage{empheq}
\usepackage{color}
\usepackage{hyperref}
\usepackage{graphicx}
\usepackage{epstopdf}
\usepackage{epsfig}

\newtheorem{thm}{Theorem}[section]

\newtheorem{cor}[thm]{Corollary}
\newtheorem{lem}[thm]{Lemma}
\newtheorem{pro}[thm]{Proposition}
\theoremstyle{definition}
\newtheorem{eg}[thm]{Example}
\newtheorem{de}[thm]{Definition}

\newtheorem*{re}{Remark}

\newcommand{\excise}[1]{}

\DeclareMathOperator{\length}{length}
\DeclareMathOperator{\diam}{diam}
\DeclareMathOperator{\post}{post}
\DeclareMathOperator{\inter}{int}

\def\R{\mathbb{R}}
\def\C{\mathbb{C}}

\def\Z{\mathbb{Z}}
\def\S{\mathbb{S}}

\def\N{\mathbb{N}}

\def\SL{{\rm SL}}

\def\T{\mathcal{T}}
\def\L{\mathcal{L}}
\def\D{\mathcal{D}}

\def\:{\colon}

\def\ra{\rightarrow}

\title{Latt\`es maps and combinatorial expansion}
\author{Qian Yin}
\thanks{The author was partially supported by NSF grants  DMS 0757732, DMS 0353549, DMS 0456940, DMS 0652915, DMS 1058772, and DMS 1058283.}
\address{Qian Yin \\The University of Chicago\\ 5734 S. University Avenue\\ Chicago, Illinois 60637\\USA}
\email{qyin@math.uchicago.edu}

\begin{document}
\maketitle

\begin{abstract}
A Latt\`es map $f\:\widehat\C\ra \widehat\C$ is a rational map that is obtained from a finite quotient of a conformal  torus endomorphism. We characterize Latt\`es maps by their combinatorial expansion behavior.
\end{abstract}

\tableofcontents

\section{Background}
\noindent
A \emph{rational map} $f\:\widehat\C\ra \widehat\C$ is a special type of analytic map on the Riemann sphere $\widehat\C=\C\cup \{\infty\}$. It can be written as a quotient of two relatively prime complex polynomials $p(z)$ and $q(z)$, with $q(z)\not=0$,
\begin{eqnarray}\label{rationalmap}
f(z)=\frac{p(z)}{q(z)}=
\frac{a_0z^{m}+\ldots+a_{m}}{b_0z^l+\ldots+b_l},
\end{eqnarray}
where $a_i,b_j \in \C$ for $i=0,\ldots,m$ and $j=0,\ldots, l$.
The fundamental problem in dynamics is to understand the behavior of the iterates of $f$,
\[f^n(z):=\underbrace{f\circ f\circ \cdots \circ f}_{\textstyle{n} \mbox{ factors}}(z).\]

The study of the dynamics of rational maps originated in 1917 by Pierre Fatou and Gaston Julia, who developed the foundations of complex dynamics. In particular, they applied Montel's theory of normal families to develop the fundamental theory of iteration (see \cite{FatSur} and \cite{JulMemoire}). Their work was more or less forgotten for over half a century. Then Benoit Mandelbrot rekindled interest in the field in the 1970s by generating beautiful and intriguing graphic images that naturally appear under iteration of rational maps through his computer experiments (see \cite{ManObjets} and \cite{ManFractal}). In recent years, the study of dynamics of rational maps has attracted considerable interest, not only because complex dynamics itself is an intriguing and rich subject, but also because of its links to other branches of mathematics, such as quasi-conformal mappings, Kleinian groups, potential theory and algebraic geometry. For instance, the study of the dynamical systems arising from polynomials and those that arise from Kleinian groups that depend on holomorphic motions are connected
by the dictionary introduced by Sullivan (see \cite{SulQuasiconformal1}), which led to his seminal work on the non-existence of wandering domains for rational maps.

Given a rational map $f\:\widehat\C\ra\widehat\C$, the \emph{degree} $\deg(f)$ of $f$ is the maximal degree of the polynomials $p(z)$ and $q(z)$ as in equation \eqref{rationalmap}. The degree of $f$ can also be defined topologically as the cardinality of the preimage over a generic (non-critical) value.

A rational map $f$ with $\deg(f)>1$ can have both expanding and contracting features.  The tension between these two features makes the dynamics of rational maps involved and interesting. \emph{We will assume that the rational map $f$ has $\deg(f)>1$ from now on.} A point $z\in \widehat\C$ is \emph{periodic} if $f^n(z)=z$ for some $n\geq 1$. In this case, it is called
\[\begin{array}{lc}
  \mbox{attracting} & \mbox{if } |(f^n)'(z)|<1; \\
  \mbox{indifferent} & \mbox{if } |(f^n)'(z)|=1;  \\
  \mbox{repelling} & \mbox{if } |(f^n)'(z)|>1.
\end{array}\]
For example, if we let $f(z)=z^2$, then $z=0$ is an attracting periodic point of $f$, and $f$ is contracting near $0$; $z=1$ is a repelling periodic point, and $f$ is expanding near $1$.

The \emph{Julia set} $J(f)$ of $f$ is the closure of the set of repelling periodic points. It is also the smallest closed set containing at least three points which is completely invariant under $f^{-1}$. For the example $f(z)=z^2$, the Julia set of $f$ is the unit circle. The complement $F(f)=\widehat\C\setminus J(f)$ of the Julia set, called the \emph{Fatou set}, is the largest open set such that the iterates of $f$ restricted to it form a normal family. The Julia set and Fatou set are both invariant under $f$ and $f^{-1}$.

The \emph{postcritical set} $\post(f)$ of $f$ is the forward orbits of the critical points
\begin{eqnarray*}
\post(f)=\bigcup_{n\geq 1}\{f^n(c)\: c\in {\rm crit}(f)\}.
\end{eqnarray*}
The postcritical set plays a crucial role in understanding the expanding and contracting features of a rational map.
If the postcritical set $\post(f)$ is finite, we say that the map $f$ is \emph{postcritically finite}.

\excise{
In the postcritically finite case,
\begin{eqnarray*}
\post(f)=\bigcup_{n\geq 1}\{f^n(c)\: c\in {\rm crit}(f)\}.
\end{eqnarray*}
}

In 1918, Samuel Latt\`es described a special class of rational maps which have a simultaneous linearization for all of their periodic points (see \cite{LatSur}). This class of maps is named after Latt\`es, even though similar examples had been studied by Ernst Schr\"oder  much earlier (see \cite{SchUeber}). A \emph{Latt\`es map} $f\:\widehat\C\ra \widehat\C$ is a rational map that is obtained from a finite quotient of a conformal torus endomorphism, i.e., the map $f$ satisfies the following commutative diagram:
\begin{equation}\label{lat}
    \begin{CD}
\T @>\bar{A}>> \T\\
@V\Theta VV @VV\Theta V\\
\widehat\C @>f>> \widehat\C
\end{CD}
\end{equation}
where $\bar A$ is a map of a torus $\T$ that is a quotient of an affine map of the complex plane, and $\Theta$ is a finite-to-one holomorphic map. Latt\`es maps were the first examples of rational maps whose Julia set is the whole sphere $\widehat \C$, and the postcritical set of a Latt\`es map is finite. More importantly, Latt\`es maps play a central role as exceptional examples in complex dynamics. We will discuss this further in the following section.

Observing that much information about the dynamics of a rational map can be deduced from the postcritical set,
Thurston introduced a topological analog of a postcritically finite rational map, now known as a \emph{Thurston map}.
A \emph{Thurston map} $f\:\S^2\ra \S^2$ is a branched covering map with finite postcritical set $\post(f)$. Thurston characterized Latt\`es maps among Thurston maps up to \emph{Thurston equivalence} (see Definition \ref{thurstoneq}) in terms of associated orbifolds and the derivatives of associated torus automorphisms  (see Section 9 in \cite{DHThurston}).

\excise{
Thurston gave a characterization of Latt\`es maps among Thurston maps in terms of the angles of cone points
on associated orbifolds. More precisely, for any Thurston map $f$, there exists a unique smallest function $\nu_f$ among functions $\nu\: \S^2\ra \N\cup \{\infty\}$ such that
\begin{equation}\label{divisor}
    \nu(p)\deg_f(p)\big|\nu(f(p))
\end{equation}
for all $p\in \S^2$. Thurston associated an \emph{orbifold} $O_f=(\S^2,\nu_f)$ to a Thurston map $f$
using the function $\nu_f$ (see \cite{DHThurston}). For each $p\in \S^2$ with $v_f(p)\not=1$, the point $p$ is a \emph{cone point} with \emph{cone angle} $2\pi/v_f(p)$.
Let
\[\chi(O_f)=2-\sum_{p\in \post(f)}\left(1-\frac{1}{\nu_f(p)}\right).\]
If $\chi(O_f)=0$, we say that the orbifold $O_f$ is \emph{parabolic}. The orbifold $O_f$ of a Latt\`es map is parabolic, and the cardinality of the postcritical set can only be $3$ 
or $4$. Latt\`es maps with $\#\post(f)=4$ are also referred to as \emph{flexible} Latt\`es maps.
}

The notion of an expanding Thurston map was introduced in \cite{BMExpanding} as a topological analog of a postcritically finite rational map whose Julia set is the whole sphere $\widehat{\C}$. Roughly speaking, a Thurston map is called \emph{expanding} if all the connected components of the preimage under $f^{-n}$ of any open Jordan region disjoint from $\post(f)$ become uniformly small as $n$ tends to infinity. We refer the reader to Definition~\ref{expandingmap} for a more precise statement. A related and more general notion of expanding Thurston maps was introduced in \cite{HPCoarse}. Latt\`es maps are among the simplest examples of expanding Thurston maps.

Latt\`es maps are distinguished among all rational maps in various ways. For instance,
Latt\`es maps are the only rational maps for which the measure of maximal entropy is absolutely continuous with respect to Lebesgue measure (see \cite{ZduParabolic}).

Many different characterizations of Latt\`es maps have been both given and conjectured
(e.g. \cite{MeyDimension}).
For example, a fundamental conjecture in complex dynamics states that the flexible Latt\`es maps are the only rational maps that admit an ``invariant line field'' on their Julia set. The significance of this conjecture is demonstrated by a theorem of Ma\~n\'e, Sad and Sullivan (see \cite{MSSdynamics}). It states that if the fundamental conjecture above is true, then hyperbolic maps are dense among rational maps.  We refer the reader to \cite{MilLattes} for a nice exposition on Latt\`es maps.

\section{Summary of Results}
\noindent
Let $f$ be an expanding Thurston map, and let $\mathcal C$ be a Jordan curve containing $\post(f)$.
The Jordan Curve Theorem implies that $\S^2\setminus\mathcal C$ has precisely two connected components, whose closures we call \emph{$0$-tiles}. We call the closure of each connected component of the preimage of $\S^2\setminus\mathcal C$ under $f^n$ an \emph{$n$-tile}. In Section 5 of \cite{BMExpanding}, it is proved that, for every $n\geq 0$, the collection of all $n$-tiles gives a cell decomposition of $\S^2$. The points in $\post(f)$ divide $\mathcal C$ into several subarcs.  Let $D_n=D_n(f,\mathcal{C})$ be the minimum number of $n$-tiles needed to join two of these subarcs that are non-adjacent (see Definition \ref{joinoppositesides} and \eqref{defdn}). For any Thurston map $f$ without periodic critical points, there exists $C>0$ such that
\begin{equation}\label{eq}
D_n\leq C(\deg f)^{n/2}
\end{equation}
for all $n>0$ (see Proposition \ref{other}). For Latt\`es maps, an inequality as in \eqref{eq} is also true in the opposite direction (see Proposition \ref{opinequality}). One of the main results of this paper asserts that, in fact, this inequality characterizes Latt\`es maps among expanding Thurston maps with no periodic critical points (see Theorem \ref{main}).
\begin{thm} \label{main0}
A map $f\:\S^2\ra \S^2$ is topologically conjugate to a Latt\`es map if and only if the following conditions hold:
\begin{itemize}
  \item $f$ is an expanding Thurston map;
  \item $f$ has no periodic critical points;
  \item there exists $c>0$ such that $D_n\geq c(\deg f)^{n/2}$ for all $n>0$.
\end{itemize}
\end{thm}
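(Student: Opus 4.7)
The forward direction is essentially Proposition \ref{opinequality}: a Lattès map lifts via $\Theta$ to an expanding conformal torus endomorphism $\bar A$, which dilates the flat metric on $\T$ uniformly by $\sqrt{\deg f}$, so $n$-tiles have diameter $\asymp (\deg f)^{-n/2}$ in the pushforward visual metric and consequently $D_n \gtrsim (\deg f)^{n/2}$. Expansion of $f$ and the absence of periodic critical points are standard features of Lattès maps, the latter because the critical points of $f$ descend from branch points of $\Theta$, which are strictly preperiodic under $\bar A$.

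For the reverse direction, suppose $f$ is an expanding Thurston map with no periodic critical points satisfying $D_n \geq c(\deg f)^{n/2}$; combined with \eqref{eq} this gives the sharp two-sided estimate $D_n \asymp (\deg f)^{n/2}$. The plan is to promote this combinatorial sharpness to an absolutely continuous measure of maximal entropy on a rational conjugate of $f$, and then invoke Zdunik's theorem \cite{ZduParabolic}.

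The steps would be as follows. (i) Following \cite{BMExpanding}, construct a visual metric $\rho$ on $\S^2$ for which every $n$-tile has $\rho$-diameter $\asymp (\deg f)^{-n/2}$; the hypothesis on $D_n$ pins the combinatorial expansion factor at $\sqrt{\deg f}$. (ii) Since the sphere is covered by $2(\deg f)^n$ many $n$-tiles with uniformly bounded combinatorial overlap --- here the absence of periodic critical points is essential, as it uniformly bounds the local degrees of $f^n$ --- a volume-counting argument shows that $(\S^2, \rho)$ is Ahlfors $2$-regular. (iii) Combined with the linear local connectedness of visual metrics of expanding Thurston maps, Ahlfors $2$-regularity allows one to apply Bonk-Kleiner quasisymmetric uniformization together with the Bonk-Meyer conjugacy theorem, producing a topological conjugacy $h$ from $f$ to a postcritically finite rational map $g$ whose Julia set is $\widehat\C$ and which inherits the bound $D_n(g) \asymp (\deg g)^{n/2}$. (iv) Because $g$ is rational and has no periodic critical points, Koebe distortion applied to iterates of $g$ on its $n$-tiles upgrades the combinatorial tile scaling to an analytic one: $n$-tiles of $g$ have spherical diameter $\asymp (\deg g)^{-n/2}$ and spherical area $\asymp (\deg g)^{-n}$. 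Since the measure of maximal entropy $\mu_g$ is the balanced measure and therefore assigns mass $\asymp (\deg g)^{-n}$ to each $n$-tile, one concludes $\mu_g \ll$ Lebesgue on $\widehat\C$. (v) By Zdunik's theorem, $g$ is a Lattès map, and hence so is $f$ up to topological conjugacy.

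The principal obstacle is step (iii), the quasisymmetric uniformization: one must verify the hypotheses of Bonk-Kleiner purely from the combinatorial input, and then invoke a Bonk-Meyer-style result to promote a quasisymmetric equivalence into a genuine topological conjugacy with a rational map. A secondary but delicate point is step (iv), where the no-periodic-critical-points assumption is used in full strength to invoke Koebe distortion uniformly in $n$ and thereby transfer the sharp tile-scaling estimate from the visual metric to the spherical metric; without this uniformity, the identification $\mu_g \asymp \text{Lebesgue}$ on tiles --- and hence the application of Zdunik's theorem --- would break down.
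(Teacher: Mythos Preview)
Your architecture matches the paper through step (iii): construct a visual metric with expansion factor $(\deg f)^{1/2}$, obtain Ahlfors $2$-regularity, apply Bonk--Kleiner uniformization and the Bonk--Meyer conjugacy theorem to pass to a rational map $g$. The endgame in step (v) via Zdunik is also morally the same as the paper's, which instead quotes Meyer's dimension characterization (Theorem~\ref{dim}); these are essentially equivalent.

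The gap is step (iv). Koebe distortion, together with the uniform bound on local degrees, tells you that each $n$-tile of $g$ is a quasi-round set in the chordal metric --- bounded eccentricity --- but it does \emph{not} tell you that all $n$-tiles have comparable chordal diameter $\asymp (\deg g)^{-n/2}$. The chordal diameter of an $n$-tile is governed by $|(g^n)'|^{-1}$ at a point of the tile, and for a general postcritically finite rational map without periodic critical points this derivative varies wildly from tile to tile; uniformity of $|(g^n)'| \asymp (\deg g)^{n/2}$ is precisely the Latt\`es property you are trying to establish. The combinatorial input $D_n \asymp (\deg g)^{n/2}$ controls diameters in the \emph{visual} metric, not the chordal one, and a quasisymmetry does not in general preserve diameter ratios.

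The paper closes this gap differently: rather than comparing tile diameters, it compares measures. The identity map $(\widehat\C,d)\to(\widehat\C,\delta)$ is a quasisymmetry between two Ahlfors $2$-regular spaces, and since $(\widehat\C,\delta)$ admits a $(1,1)$-Poincar\'e inequality, the Heinonen--Koskela theorem (Theorem~\ref{absolutecontinuous}) forces the two Hausdorff $2$-measures to be mutually absolutely continuous. Since the visual Hausdorff $2$-measure already assigns mass $\asymp(\deg g)^{-n}$ to each $n$-tile by $2$-regularity, this delivers exactly the comparison with Lebesgue measure that your Koebe argument was meant to produce, and Zdunik (or Meyer) finishes the proof.
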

There is an interpretation (see Theorem 5.3 in \cite{YinThurston}) of Theorem \ref{main0} in the Sullivan dictionary corresponding to Hamenst{\"a}dt's entropy rigidity theorem (see \cite{HamEntropy}).

Let $f$ be an expanding Thurston map. Even though $D_n=D_n(f,\mathcal C)$ depends on the Jordan curve $\mathcal C$, its growth rate is independent of $\mathcal C$. Hence the limit
\begin{equation}\label{comb}
    \Lambda_0(f)=\lim_{n\ra \infty}\big(D_n(f,\mathcal C)\big)^{1/n}
\end{equation}
exists and only depends on the map $f$ itself (see \cite[Prop.~17.1]{BMExpanding}). We call this limit $\Lambda_0(f)$ the \emph{combinatorial expansion factor} of $f$. This quantity $\Lambda_0(f)$ is invariant under topological conjugacy and is multiplicative in the sense that $\Lambda_0(f)^n$ is the combinatorial expansion factor of $f^n$.
Inequality \eqref{eq} implies that
\[\Lambda_0(f)\leq (\deg f)^{1/2}.\]


The combinatorial expansion factor is closely related to the notion of \emph{visual metrics and their expansion factors}. Every expanding Thurston map $f\:\S^2\ra \S^2$ induces a natural class of metrics on $\S^2$, called \emph{visual metrics} (see Definition \ref{visual}), and each visual metric $d$ has an associated \emph{expansion factor} $\Lambda > 1$. This visual metric is essentially characterized by the geometric property that the diameter of an $n$-tile is about $\Lambda^{-n}$, and the distance between two disjoint $n$-tiles is at least about $\Lambda^{-n}$. The supremum of the expansion factors of all visual metrics is equal to the combinatorial expansion factor $\Lambda_0$ (see \cite[Theorem 1.5]{BMExpanding}). For Latt\`es maps, the supremum is obtained. In general, the supremum is not obtained. For examples, the supremum is not obtained for Latt\`es-type maps that are not Latt\`es maps (see Section \ref{lattes}).
We will show in Proposition \ref{existenceofvisualmetric} that Theorem \ref{main0} remains true if we replace the third condition by the requirement that there exists a visual metric on $\S^2$ with expansion factor $\Lambda=(\deg f)^{1/2}$.

Here we outline the sufficiency of the three conditions in Theorem \ref{main0}. These three conditions imply the existence of a visual metric $d$ on $\S^2$ with expansion factor $\Lambda=(\deg f)^{1/2}$ (see Proposition \ref{existenceofvisualmetric}). This is the most technical part of the paper. The way that we construct the visual metric uses the idea that any quasi-visual metric can be modified to be a visual metric.
The existence of this visual metric implies that $(\S^2,d)$ is Ahlfors $2$-regular, which means any ball with radius $r$ has Hausdorff $2$-measure roughly $r^2$
(see Proposition \ref{qstosphere}). Using this $2$-regularity together with the linear local connectivity  condition of $(\S^2, d)$, we obtain that $(\S^2,d)$ is quasisymmetrically equivalent to the Riemann sphere $\widehat\C$ by \cite[Theorem 1.1]{BKQuasisymmetric} (see Proposition \ref{qstosphere}).
We deduce that $f$ is topologically conjugate to a rational map from the quasisymmetrical equivalence of $(\S^2,d)$ to the Riemann sphere $\widehat \C$ (see Proposition \ref{smallsum}). Now we can focus on the rational maps with three conditions satisfied in Theorem \ref{main0}. In order to invoke the characterization of Latt\`es maps among rational maps by \cite{MeyDimension}, we need that the Hausdorff measure with respect to the visual metric and with respect to the standard chordal metric $d$ on $\widehat \C$ are essentially the same. This follows from a theorem by Juha Heinonen and Pekka Koskela 
(see Theorem \ref{absolutecontinuous}), and it implies that the dimension of Lebesgue measure with respect to the visual metric $d$ is equal to 2 (see Theorem \ref{suff}). We conclude that the map $f$ is topologically conjugate to a Latt\`es map.

We define \emph{Latt\`es-type} maps so as to include non-rational maps that are quotients of affine maps and share many desired properties of Latt\`es maps. Comparing with diagram \ref{lat},
we have a commutative diagram
\[\begin{CD}
\T @>\bar{A}>> \T\\
@V\Theta VV @VV\Theta V\\
\S^2 @>f>> \S^2
\end{CD}.\]
where $\Theta$ is essentially the same $\Theta$ as in diagram \ref{lat}, and we require $\bar A$ to be a quotient of an affine map on the real plane rather than the complex plane.
A map $f\:\S^2\ra\S^2$ obtained by the above commutative diagram is called a \emph{Latt\`es-type} map (see Definition \ref{lattetype}). If a Latt\`es-type map 
is rational, then the map 
is a Latt\`es map.

Latt\`es-type maps are examples of expanding Thurston maps, and they have the same orbifold structures as Latt\`es maps (see Proposition \ref{dnrelation}).
\begin{pro}
Let $f$ be a Latt\`es-type map with orbifold type $(2,2,2,2)$. Let $A$ be its corresponding linear map from $\R^2$ to $\R^2$ and let $\wp\: \R^2\ra \S^2$ be the Weierstrass function with the lattice $2\Z^2$. We have
\[\frac{1}{\|A^{-n}\|_{\infty}}\leq D_n(f,\mathcal{C})\leq \frac{1}{\|A^{-n}\|_{\infty}}+1,\]
where the Jordan curve $\mathcal{C}$ is the image of the boundary of the unit square $[0,1]\times[0,1]$ under $\wp$.
\end{pro}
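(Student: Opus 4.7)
Proof plan.

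The plan is to use the Weierstrass quotient $\wp$ to translate $D_n(f,\mathcal{C})$ into a combinatorial question about chains of parallelogram cells in $\R^2$, and then establish each inequality separately. Since $\wp$ factors through $\T = \R^2/2\Z^2$ as the $2$-to-$1$ quotient by the involution $\sigma(z) = -z$, and the four postcritical points are the images of the $2$-torsion points of $\T$, one checks directly that $\wp^{-1}(\mathcal{C}) = \partial[0,1]^2 \cup \sigma(\partial[0,1]^2)$ equals the full integer grid $\{x\in\Z\}\cup\{y\in\Z\}$ in $\T$. Using the semiconjugacy $\wp\circ\bar A^n = f^n\circ\wp$, the preimage $\wp^{-1}(f^{-n}(\mathcal{C}))$ lifts to the grid $A^{-n}(\Z^2\text{-grid})$ in $\R^2$, whose cells are parallelograms congruent to $A^{-n}([0,1]^2)$ and correspond, in pairs identified by $\sigma$, to the $n$-tiles of $f$ on $\S^2$.

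Under this identification, non-adjacent subarcs of $\mathcal{C}$ correspond to opposite edges of $[0,1]^2$ in $\R^2$, and any chain of $n$-tiles on $\S^2$ joining two non-adjacent subarcs lifts, via $\wp^{-1}$ and the universal cover, to a chain of parallelogram cells in $\R^2$ joining two parallel lines at unit distance---either $\{y=0\}$ and $\{y=1\}$ or $\{x=0\}$ and $\{x=1\}$. Conversely, any such cell-chain descends to an $n$-tile chain on $\S^2$. Hence $D_n$ equals the minimum, over these two directions, of the shortest cell-chain length.

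For the lower bound, project each cell in a chain onto the axis transverse to the two parallel lines. The cell $A^{-n}([a,a+1]\times[b,b+1])$ projects to an interval of length $|A^{-n}_{21}|+|A^{-n}_{22}|$ on the $y$-axis and $|A^{-n}_{11}|+|A^{-n}_{12}|$ on the $x$-axis. Since the union of $k$ such projections must cover the unit interval, any $k$-chain joining the two opposite sides satisfies $k \geq 1/(|A^{-n}_{i1}|+|A^{-n}_{i2}|)$ for the appropriate $i\in\{1,2\}$. Taking the minimum over the two directions gives $D_n \geq 1/\|A^{-n}\|_{\infty}$, interpreting $\|A^{-n}\|_{\infty}$ as the maximum absolute row sum (the operator norm on $(\R^2,\ell^\infty)$).

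For the upper bound, I would choose the pair of opposite sides corresponding to the row that realizes $\|A^{-n}\|_{\infty}$, and construct an explicit chain of cells by advancing in the cell-adjacency direction that produces the greatest transverse progress per step. A direct count, exploiting the overlap of parallelogram-cell projections along the direction of travel, bounds the length of this chain by $\lceil 1/\|A^{-n}\|_{\infty}\rceil \leq 1/\|A^{-n}\|_{\infty} + 1$. The main technical obstacle is this upper bound: verifying that a chain of the claimed length exists requires a careful construction adapted to the shape of the $A^{-n}$-parallelograms, since a naive chain along a straight line segment or an arbitrary row of cells can easily overshoot the bound. Once the two inequalities are combined, the proposition follows.
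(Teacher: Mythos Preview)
Your reduction to parallelogram chains in $\R^2$ via $\wp$ and the semiconjugacy, and your identification of $D_n$ with the minimum over the two axis directions of the shortest cell-chain length, match the paper's setup exactly. Your projection argument for the lower bound is correct and is in fact cleaner than the paper's phrasing of the same inequality.

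The one substantive difference is the upper bound. The paper does not build an explicit chain as you propose; instead it gives a single characterization that yields both inequalities at once. Writing $L$ for the linear part of $A$, the paper observes that there is a chain of $m$ $n$-parallelograms joining $y=0$ to $y=1$ if and only if $L^{-n}([-m,m]^2)$ meets the line $\{y=1\}$. Granting this equivalence, $D_n'$ is the least integer $m$ with
\[
m\cdot\big(|(L^{-n})_{21}|+|(L^{-n})_{22}|\big)=\max_{\pm}\big(L^{-n}(\pm m,\pm m)\big)_2\geq 1,
\]
and under the standing assumption $D_h\leq D_v$ the row realizing the maximum absolute row sum is the second one, so this reads $m\,\|L^{-n}\|_\infty\geq 1$, giving both bounds immediately. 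The point that dissolves the obstacle you flagged is that in an $m\times m$ block of cells the diagonal from the center to a corner is already a chain of $m$ cells (corner-adjacency suffices for $n$-tiles to form a chain), so once the corner reaches $y=1$ no delicate construction is needed. Your proposed explicit chain would also work, but the paper's route is shorter.
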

Here $\|B\|_\infty$ denotes the operator norm of a linear map $B$ on $\R^2$ with respect to
the $\ell^{\infty}$-norm.
As a corollary of this proposition and equation \eqref{comb}, we have the following result (see Corollary \ref{combexpansionfactor}).
\begin{cor} 
Let $f$ be a Latt\`es-type map with orbifold type $(2,2,2,2)$, and let $A$ be the corresponding
linear map from $\R^2$ to $\R^2$.
Then the combinatorial expansion factor $\Lambda_0(f)$ equals the minimum absolute value of the eigenvalues of $A$.
\end{cor}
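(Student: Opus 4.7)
The plan is a direct reduction to Gelfand's spectral radius formula via the two-sided estimate supplied by the preceding proposition. Let $\lambda_1,\lambda_2$ denote the eigenvalues of $A$ (counted with multiplicity) and set $\lambda_{\min}=\min(|\lambda_1|,|\lambda_2|)$. The sandwich
\[\frac{1}{\|A^{-n}\|_\infty}\le D_n(f,\mathcal{C})\le \frac{1}{\|A^{-n}\|_\infty}+1\]
will reduce the evaluation of $\lim_{n\to\infty}D_n(f,\mathcal{C})^{1/n}$ to that of $\lim_{n\to\infty}\|A^{-n}\|_\infty^{-1/n}$.

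First I would note that a Latt\`es-type map is an expanding Thurston map, which forces both eigenvalues of $A$ to have absolute value strictly greater than $1$. Hence $\|A^{-n}\|_\infty\to 0$, the lower bound $1/\|A^{-n}\|_\infty$ diverges to infinity, and taking $n$-th roots renders the additive constant $1$ in the upper bound negligible: both endpoints of the sandwich are asymptotic to $\|A^{-n}\|_\infty^{-1/n}$ after extracting $n$-th roots.

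Next I would invoke Gelfand's formula, which is independent of the choice of submultiplicative norm on $\R^2$, to obtain
\[\lim_{n\to\infty}\|A^{-n}\|_\infty^{1/n}=\rho(A^{-1}).\]
Since the eigenvalues of $A^{-1}$ are $1/\lambda_1$ and $1/\lambda_2$, the spectral radius $\rho(A^{-1})$ equals $1/\lambda_{\min}$, whence $\|A^{-n}\|_\infty^{-1/n}\to \lambda_{\min}$. Combining this with the squeeze on $D_n(f,\mathcal{C})$ and recalling the definition \eqref{comb} of the combinatorial expansion factor yields $\Lambda_0(f)=\lambda_{\min}$. There is no substantive obstacle once the preceding proposition is in hand; the only point requiring care is taking the spectral radius of $A^{-1}$ rather than of $A$, so that reciprocation of eigenvalues correctly produces the minimum of $|\lambda_i|$ rather than the maximum.
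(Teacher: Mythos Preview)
Your proof is correct and follows essentially the same route as the paper: apply the sandwich from the preceding proposition, take $n$-th roots, invoke Gelfand's formula to identify the limit as $1/\rho(A^{-1})$, and observe that $\rho(A^{-1})=1/\lambda_{\min}$. The only cosmetic difference is that the paper works with the linear part $L$ of the affine map while you write $A$ throughout, and you add an explicit remark justifying why the $+1$ is negligible after taking roots.
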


\bigskip
\noindent
\textbf{Acknowledgements.} This paper is part of the author's PhD thesis under the supervision of Mario Bonk. The author would like to thank Mario Bonk for introducing her to and teaching her about the subject of Thurston maps and its related fields. The author is inspired by his enthusiasm and mathematical wisdom, and is especially grateful for his patience and encouragement. The author would like to thank Dennis Sullivan for valuable conversations and sharing his mathematical insights.
The author also would like to thank Kyle Kinneberg, Alan Stapledon and Michael Zieve for useful comments and feedback.

\section{Expanding Thurston maps and Cell Decompositions} \label{expanding}
\noindent
In this section we review some definitions and facts on expanding Thurston maps. We refer the reader to Section 3 in \cite{BMExpanding} for more details. We write $\N$ for the set of positive integers, and $\N_0$ for the set of non-negative integers. We denote the identity map on $\S^2$ by ${\rm id}_{\S^2}$.

Let $\S^2$ be a topological 2-sphere with a fixed orientation. A continuous map $f\:\S^2\ra \S^2$ is called \emph{a branched covering map} over $\S^2$ if $f$ can be locally written as
\[z\mapsto z^d\]
under certain orientation-preserving coordinate changes of the domain and range. More precisely, we require that for any point $p\in \S^2$, there exists some integer $d>0$, an open neighborhood $U_p\subset \S^2$ of $p$, an open neighborhood $V_q\subseteq \S^2$ of $q=f(p)$, and orientation-preserving homeomorphism
\[\phi\: U_p\ra U\subseteq \C\]
and
\[\psi \: V_p\ra V\subseteq \C\]
with $\phi(p)=0$ and $\psi(q)=0$ such that
\[(\psi\circ f \circ \phi^{-1} )(z)=z^d\]
for all $z\in U$. The positive integer $d=\deg_f(p)$ is called the \emph{local degree} of $f$ at $p$ and only depends on $f$ and $p$. A point $p\in \S^2$ is called a \emph{critical point} of $f$ if $\deg_f(p)\geq 2$, and a point $q$ is called \emph{critical value} of $f$ if there is a critical point in its preimage $f^{-1}(q)$. If $f$ is a branched covering map of $\S^2$, $f$ is open and surjective. There are only finitely many critical points of $f$ and $f$ is \emph{finite-to-one} due to the compactness of $\S^2$. Hence, $f$ is a covering map away from critical values in the range and the preimages of critical values in the domain. The \emph{degree $\deg(f)$} of $f$ is the cardinality of the preimage over a non-critical value. In addition, we have
\[\deg(f)=\sum_{p\in f^{-1}(q)}\deg_f(p)\]for every $q\in \S^2$. For $n\in \N$, we denote the $n$-th iterate of $f$ as
\[f^n=\underbrace{f\circ f\circ \cdots \circ f}_{\textstyle{n} \mbox{ factors}}.\]
We also set $f^0={\rm id}_{\S^2}$. If $f$ is a branched cover of $\S^2$, so is $f^n$, and
\[\deg(f^n)=\deg(f)^n.\]Let crit$(f)$ be the set of all the critical points of $f$. The set of \emph{postcritical points} of $f$ is defined as
\[\post(f)=\bigcup_{n\in \N}\{f^n(c)\: c\in {\rm crit}(f)\}.\]We call a map $f$ \emph{postcritically-finite} if the cardinality of $\post(f)$ is finite. Since
\[{\rm crit}(f^n) = {\rm crit}(f) \cup f^{-1}({\rm crit}(f))\cup \cdots \cup f^{-(n-1)}({\rm crit}(f)),\] one can verify that $\post(f)=\post(f^n)$ for any $n\in \N$. So $f$ is postcritically-finite if and only if there is some $n\in \N$ for which $f^n$ is postcritically-finite.

Let $\mathcal{C}\subset \S^2$ be a Jordan curve containing $\post(f)$. We fix a metric $d$ on $\S^2$ that induces the standard metric topology on $\S^2$.
Denote by \emph{${\rm mesh}(f,n,{\mathcal C})$} the supremum of the diameters of all connected components of the set $f^{-n}(\S^2\setminus {\mathcal C})$.

\begin{de} \label{expandingmap}
A branched covering map $f\:\S^2\ra \S^2$ is called a \emph{Thurston map} if $\deg(f)\geq 2$ and $f$ is postcritically-finite. A Thurston map $f\:\S^2\ra \S^2$ is called \emph{expanding} if there exists a Jordan curve $\mathcal{C}\subset \S^2$ with $\mathcal{C} \supset \post(f)$ and
\begin{equation} \label{mesh}
\lim_{n\ra \infty}{\rm mesh}(f,n,{\mathcal C})=0.
\end{equation}
\end{de}

The relation \eqref{mesh} is a topological property, as it is independent of the choice of the metric, as long as the metric induces the standard topology on $\S^2$. Lemma 8.1 in \cite{BMExpanding} shows that if the relation \eqref{mesh} is satisfied for one Jordan curve ${\mathcal C}$ containing $\post(f)$, then it holds for every such curve. One can essentially show that a Thurston map is expanding if and only if all the connected components in the preimage under $f^{-n}$ of any open Jordan region not containing $\post(f)$  become uniformly small as $n$ goes to infinity.

The following theorem (Theorem 1.2 in \cite{BMExpanding}) says that there exists an invariant Jordan curve for some iterate of $f$.
\begin{thm} \label{invariantJordancurve}
If $f \: \S^2\ra \S^2$ is an expanding Thurston map, then for some $n\in \N$ there exists a Jordan curve $\mathcal{C}\subset \S^2$ containing $\post(f)$ such that $\mathcal C$ is invariant under $f^n$, i.e., $f^n({\mathcal C})\subseteq {\mathcal C}$.
\end{thm}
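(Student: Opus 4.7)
The strategy has two stages: first, to exhibit, for any Jordan curve $\mathcal{C}_0 \supseteq \post(f)$ and every sufficiently large $n$, a Jordan curve $\mathcal{C}' \subseteq f^{-n}(\mathcal{C}_0)$ that contains $\post(f)$ and is isotopic to $\mathcal{C}_0$ rel $\post(f)$; second, to iterate this construction and pass to a Hausdorff limit to obtain a genuinely $f^n$-invariant curve.

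For the first stage I would fix an arbitrary Jordan curve $\mathcal{C}_0 \supseteq \post(f)$ and write it as a union of closed edges $e_1,\dots,e_m$ whose endpoints are consecutive points of $\post(f)$ along $\mathcal{C}_0$. Choose $\delta>0$ smaller than half the minimum distance between any two non-adjacent edges. By the expanding assumption \eqref{mesh} we may pick $n$ so large that $\mathrm{mesh}(f,n,\mathcal{C}_0)<\delta$. The preimage $f^{-n}(\mathcal{C}_0)$ is a finite graph on $\S^2$ whose vertex set contains $\post(f)$ and whose complementary faces have diameter $<\delta$. Following the boundary of these $n$-tiles along $e_i$ produces, in the $\delta$-neighborhood $N_i$ of $e_i$, an arc $\tilde e_i\subseteq f^{-n}(\mathcal{C}_0)$ with the same endpoints as $e_i$; since $e_i$ and $\tilde e_i$ both lie in the simply connected set $N_i$ and agree at endpoints, they are isotopic rel endpoints. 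The choice of $\delta$ ensures the neighborhoods $N_i$, $N_j$ of non-adjacent edges are disjoint, so the arcs $\tilde e_i$ meet only at their common postcritical endpoints, and $\mathcal{C}'=\bigcup_i \tilde e_i$ is a Jordan curve with the required properties.

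For the second stage I would iterate: set $\mathcal{C}^{(0)}=\mathcal{C}_0$ and let $\mathcal{C}^{(k+1)}$ be the curve produced by the procedure above applied to $\mathcal{C}^{(k)}$, keeping the same $n$. By construction $\mathcal{C}^{(k+1)}\subseteq f^{-n}(\mathcal{C}^{(k)})$, so inductively $\mathcal{C}^{(k)}\subseteq f^{-kn}(\mathcal{C}_0)$, and the expansion property forces $\mathrm{mesh}(f,kn,\mathcal{C}_0)\to 0$ geometrically. Since every point of $\mathcal{C}^{(k+1)}$ lies on an $n$-tile boundary for $\mathcal{C}^{(k)}$, the Hausdorff distance $d_H(\mathcal{C}^{(k)},\mathcal{C}^{(k+1)})$ is bounded by $\mathrm{mesh}(f,n,\mathcal{C}^{(k)})\le \mathrm{mesh}(f,(k+1)n,\mathcal{C}_0)$, and these distances are summable. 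The sequence $\mathcal{C}^{(k)}$ is therefore Cauchy in Hausdorff distance; let $\mathcal{C}$ be its limit. The relation $f^n(\mathcal{C}^{(k+1)})\subseteq \mathcal{C}^{(k)}$ combined with compactness and continuity of $f^n$ yields $f^n(\mathcal{C})\subseteq \mathcal{C}$, and of course $\mathcal{C}\supseteq \post(f)$ since each $\mathcal{C}^{(k)}$ does.

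The main obstacle is verifying that the Hausdorff limit $\mathcal{C}$ really is a Jordan curve and not a degenerate object with pinched points, self-intersections, or dendritic hairs. To handle this I would keep a parametrization $\gamma_k\colon \S^1\to \S^2$ of $\mathcal{C}^{(k)}$ with each arc $\tilde e_i$ contained in the $\delta_k$-neighborhood of the corresponding edge of $\mathcal{C}^{(k-1)}$, where $\delta_k\to 0$ geometrically. This makes $\gamma_{k+1}$ uniformly close to $\gamma_k$ in the sup-norm, so the $\gamma_k$ converge uniformly to a loop $\gamma$ parametrizing $\mathcal{C}$. The separation estimate coming from the choice of $\delta$ at each stage, which kept non-adjacent edge-lifts uniformly apart, is inherited in the limit and prevents the loop $\gamma$ from developing self-intersections, so $\mathcal{C}$ is indeed a Jordan curve containing $\post(f)$ and invariant under $f^n$.
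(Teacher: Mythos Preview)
The paper does not prove this theorem at all: it is quoted verbatim as Theorem~1.2 of \cite{BMExpanding} and used as a black box. So there is no ``paper's own proof'' to compare against, and any argument you supply is necessarily going beyond what the present paper does.

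That said, your sketch is in the spirit of the Bonk--Meyer argument but has a genuine gap in the second stage. You write that $\mathcal{C}^{(k+1)}$ is obtained by applying the first-stage procedure to $\mathcal{C}^{(k)}$ ``keeping the same $n$,'' and then assert $\mathrm{mesh}(f,n,\mathcal{C}^{(k)})\le \mathrm{mesh}(f,(k+1)n,\mathcal{C}_0)$. But the inclusion $\mathcal{C}^{(k)}\subseteq f^{-kn}(\mathcal{C}_0)$ gives $f^{-n}(\mathcal{C}^{(k)})\subseteq f^{-(k+1)n}(\mathcal{C}_0)$, which means the complementary components for $(f,n,\mathcal{C}^{(k)})$ are \emph{coarser}, not finer, than those for $(f,(k+1)n,\mathcal{C}_0)$; your mesh inequality goes the wrong way. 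More fundamentally, the $\delta$ in stage one was chosen relative to the geometry of $\mathcal{C}_0$, and there is no a priori reason the same $n$ works uniformly for the successive curves $\mathcal{C}^{(k)}$, whose non-adjacent edges could in principle come closer and closer together.

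The fix used in \cite{BMExpanding} is to replace ``redo the construction'' by ``lift the isotopy.'' Stage one gives a homeomorphism $h_0$ isotopic to $\mathrm{id}_{\S^2}$ rel $\post(f)$ with $h_0(\mathcal{C}_0)=\mathcal{C}^{(1)}\subseteq f^{-n}(\mathcal{C}_0)$. Since $f^n$ is a branched cover branched only over $\post(f)$, this $h_0$ lifts to $h_1$ with $f^n\circ h_1=h_0\circ f^n$, still isotopic to the identity rel $\post(f)$; setting $\mathcal{C}^{(2)}=h_1(\mathcal{C}^{(1)})$ gives $f^n(\mathcal{C}^{(2)})\subseteq \mathcal{C}^{(1)}$ automatically. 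Iterating, the displacements $\sup_x d(h_k(x),x)$ shrink geometrically in a visual metric for $f$, so the infinite composition $\cdots\circ h_1\circ h_0$ converges uniformly to a homeomorphism $h$, and $\mathcal{C}=h(\mathcal{C}_0)$ is manifestly a Jordan curve with $f^n(\mathcal{C})\subseteq\mathcal{C}$. This also sidesteps your final worry about pinching in the Hausdorff limit: injectivity is inherited from $h$ being a uniform limit of homeomorphisms with summable displacements.
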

In the following, it is not assumed that the Jordan curve $\mathcal{C}$ is invariant unless stated otherwise.

Recall that an \emph{isotopy} $H$ between two homeomorphisms is a homotopy so that at each time $t\in [0,1]$, the map $H_t$ is a homeomorphism. An \emph{isotopy $H$ relative to a set $A$} is an isotopy satisfying
\[H_t(a)=H_0(a)=H_1(a)\]
for all $a\in A$ and $t\in [0,1]$.

\begin{de} \label{thurstoneq}
Consider two Thurston maps $f\:\S^2\ra \S^2$ and $g\:\S^2_1\ra \S^2_1$, where $\S^2$ and $\S^2_1$ are $2$-spheres. We call the maps $f$ and $g$ \emph{(Thurston) equivalent} if there exist homeomorphisms $h_0,h_1\:\S^2\ra \S^2_1$ that are isotopic relative to $\post(f)$ such that $h_0\circ f=g\circ h_1$.
We call the maps $f$ and $g$ \emph{topologically conjugate} if there exists a homeomorphism $h\:\S^2\ra \S^2_1$ such that $h\circ f=g\circ h$.
\end{de}
For equivalent Thurston maps, we have the following commutative diagram
\[\begin{CD}
\S^2 @>h_1>>\S^2_1 \\
@Vf VV @VVg V\\
\S^2 @>h_0>> \S^2_1  .
\end{CD} \]

If $f\:\S^2\ra\S^2$ is an expanding Thurston map and $g\:\S^2_1\ra \S^2_1$ is topologically conjugate to $f$, then  $g$ is also expanding. If $f$ and $g$ are equivalent Thurston maps and one of them is expanding, then the other one is not necessarily expanding as well. Thus, topological conjugacy is a much stronger condition than Thurston equivalence.
The following theorem (see Theorem 9.2 in \cite{BMExpanding}) shows that under the condition that both maps are expanding, these two relations are the same.
\begin{thm}
Let $f \: \S^2\ra \S^2$ and $g \: \S^2_1 \ra \S^2_1$ be equivalent Thurston
maps that are expanding. Then they are topologically conjugate.
\end{thm}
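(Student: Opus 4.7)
The plan is to construct the conjugating homeomorphism as the uniform limit of a sequence of lifts obtained by repeatedly pulling the Thurston isotopy back through the dynamics. By Theorem~\ref{invariantJordancurve}, after replacing $f$ by a suitable iterate I may fix a Jordan curve $\mathcal{C}\subseteq \S^2$ with $\post(f)\subseteq \mathcal{C}$ and $f(\mathcal{C})\subseteq \mathcal{C}$; the preimages $f^{-n}(\mathcal{C})$ then give a nested sequence of cell decompositions of $\S^2$ whose mesh tends to $0$ by expansion, and $h_0(\mathcal{C})$ plays the corresponding role on $\S^2_1$, with mesh going to $0$ because $g$ is expanding (which for equivalent Thurston maps can be read off from the semiconjugation $h_0\circ f=g\circ h_1$ together with the expansion of $f$).

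Starting from homeomorphisms $h_0,h_1\:\S^2\ra \S^2_1$ with $h_0\circ f=g\circ h_1$ and an isotopy $H$ from $h_0$ to $h_1$ relative to $\post(f)$, I would lift inductively. Away from the critical loci, $f$ and $g$ restrict to covering maps; applying the homotopy lifting theorem to $H_t\circ f$ and the covering $g$ produces a unique isotopy $\tilde H_t$ starting at $h_1$ and satisfying $g\circ \tilde H_t=H_t\circ f$. Since $H_t$ is $t$-independent on $\post(f)$ and the critical values of $g$ lie in $h_0(\post(f))$, this lift extends continuously across the critical points of $f$. Setting $h_2:=\tilde H_1$ yields a homeomorphism with $h_1\circ f=g\circ h_2$, and iterating produces a sequence $(h_n)$ satisfying $h_{n-1}\circ f=g\circ h_n$, with each $h_n$ isotopic to $h_{n-1}$ relative to $f^{-(n-1)}(\post(f))$.

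Uniform convergence then follows from a tile-by-tile comparison of $h_n$ and $h_{n+1}$. From the two functional equations one checks that $h_n(X)$ and $h_{n+1}(X)$ lie in a common $n$-tile $Y$ for $g$ whenever $X$ is an $n$-tile for $f$, and since $h_n$ and $h_{n+1}$ are isotopic relative to the level-$n$ vertex set they agree on $\partial X$. Consequently
\[\sup_{x\in \S^2}d_1\bigl(h_n(x),h_{n+1}(x)\bigr)\leq \max_Y \diam(Y)\ra 0\]
as $n\ra\infty$ by expansion of $g$, so the sequence is uniformly Cauchy and converges to a continuous map $h\:\S^2\ra \S^2_1$ with $h\circ f=g\circ h$.

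The main obstacle is then showing that the limit $h$ is actually a homeomorphism. I would run the symmetric construction with $h_0^{-1}$ and $h_1^{-1}$ playing the roles of $h_0,h_1$ to produce a continuous map $h'\:\S^2_1\ra \S^2$ with $h'\circ g=f\circ h'$. The compositions $h\circ h'$ and $h'\circ h$ are continuous self-maps of the respective spheres that pointwise fix each of the dense nested vertex sets $\bigcup_{n\geq 0} f^{-n}(\post(f))$ and $\bigcup_{n\geq 0} g^{-n}(\post(g))$, so they must equal the identity, giving the desired conjugacy. Expansion is the essential ingredient throughout: without it the successive lifts could fail to converge, and the dense invariant vertex sets used to pin down the limit would not exist.
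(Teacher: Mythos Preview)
The paper does not prove this theorem; it is quoted verbatim from \cite{BMExpanding} (Theorem~9.2 there) as background. So there is no ``paper's own proof'' to compare against, only the Bonk--Meyer argument your sketch is clearly modeled on. The overall architecture---iterated lifting of the isotopy to produce $h_n$ with $h_{n-1}\circ f=g\circ h_n$, uniform convergence from expansion, and the symmetric construction for the inverse---is exactly the standard one and is sound in outline.

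Two technical points need repair. First, do not pass to an iterate of $f$ via Theorem~\ref{invariantJordancurve}: conjugacy of $f^n$ with $g^n$ does not by itself yield conjugacy of $f$ with $g$, and the argument does not require an $f$-invariant curve. Any Jordan curve $\mathcal C\supseteq\post(f)$ gives cell decompositions with ${\rm mesh}(f,n,\mathcal C)\to 0$, which is all you use. Second, your convergence step overclaims. The lifted isotopy from $h_n$ to $h_{n+1}$ is relative to $f^{-n}(\post(f))$, so $h_n$ and $h_{n+1}$ agree on the $n$-\emph{vertices}, not on all of $\partial X$; and $h_{n+1}(X)$ is an $n$-tile for $(g,h_1(\mathcal C))$, not for $(g,h_0(\mathcal C))$, so the two images need not coincide. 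What does hold is that for every $k\ge n$ and every $n$-tile $X$ with vertex $v$, the set $h_k(X)$ contains the common point $h_n(v)$ and is mapped homeomorphically by $g^n$ onto a $0$-tile for the curve $h_{k-n}(\mathcal C)$; since all these curves are isotopic to $h_0(\mathcal C)$ rel $\post(g)$, each $h_k(X)$ lies in a uniformly bounded union of $n$-flowers for $(g,h_0(\mathcal C))$, whose diameters tend to $0$. This gives the uniform Cauchy estimate for \emph{all} $k\ge n$ simultaneously, which your bound on $d_1(h_n,h_{n+1})$ alone does not. With these adjustments the proof goes through exactly as in \cite{BMExpanding}.
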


We now consider the cardinality of the postcritical set of $f$. In Remark 5.5 in \cite{BMExpanding}, it is proved that there are no Thurston maps with $\#\post(f)\leq 1$. Proposition 6.2 in \cite{BMExpanding} shows that all Thurston maps with $\#\post(f)=2$ are Thurston equivalent to a \emph{power map}  on the Riemann sphere,
\[z\mapsto z^k, \mbox{ for some }k\in \Z\setminus\{-1,0,1\}.\]
Corollary 6.3 in \cite{BMExpanding} states that if $f\:\S^2\ra\S^2$ is an expanding Thurston map, then $\#\post(f)\geq 3$.

Let $f\:\S^2\ra\S^2$ be a Thurston map, and let ${\mathcal C}\subset \S^2$ be a Jordan curve containing $\post(f)$. By the Sch\"onflies theorem, the set $\S^2\setminus {\mathcal C}$ has two connected components, which are both homeomorphic to the open unit disk. Let $T_0$ and $T_0'$ denote the closures of these components. They are cells of dimension $2$, which we call \emph{$0$-tiles}. The postcritical points of $f$ are called \emph{$0$-vertices} of $T_0$ and $T_0'$, 
which are cells of dimension $0$. The closed arcs on $\mathcal{C}$ between vertices \emph{$0$-edges} of $T_0$ and $T_0'$, which are cells of dimension $1$. These $0$-vertices, $0$-edges and $0$-tiles form a cell decomposition of $\S^2$, denoted by $\D^0=\D^0(f,{\mathcal C})$. We call the elements in $\D^0$ $0$-cells. Let $\D^1=\D^1(f,{\mathcal C})$ be the set of connected subsets $c\subset \S^2$ such that $f(c)$ is a cell in $\D^0$ and $f|_c$ is a homeomorphism of $c$ onto $f(c)$. Call $c$ a $1$-tile if $f(c)$ is a $0$-tile, call $c$ a $1$-edge if $f(c)$ is a $0$-edge, and call $c$ a $1$-vertex if $f(c)$ is a $1$-vertex. Lemma 5.4 in \cite{BMExpanding} states that $\D^1$ is a cell decomposition of $\S^2$. Continuing in this manner, let $\D^n=\D^n(f,{\mathcal C})$  be the set of all connected subsets of $c\subset \S^2$ such that $f(c)$ is a cell in $\D^{n-1}$ and $f|_c$ is a homeomorphism of $c$ onto $f(c)$, and call these connected subsets $n$-tiles, $n$-edges and $n$-vertices correspondingly, for $n\in\N_0$. By Lemma 5.4 in \cite{BMExpanding}, $\D^n$ is a cell decomposition of $\S^2$, for each $n\in \N_0$, and we call the elements in $\D^n$ $n$-cells. The following lemma lists some properties of these cell decompositions. For more details, we refer the reader to Proposition 6.1 in \cite{BMExpanding}.
\begin{lem}\label{tilenumber}
Let $k,n\in \N_0$, let $f\:\S^2\ra \S^2$ be a Thurston map, let $\mathcal C\subset~ \S^2$ be a Jordan curve with $\mathcal C\supset \post(f)$, and let $m=\#\post(f)$. Consider the associated cell decompositions of $\S^2$ described above.
\begin{enumerate}
  \item 
     If $\tau$ is any $(n+k)$-cell, then $f^k(\tau)$ is an $n$-cell, and $f^k|_{\tau}$ is a homeomorphism of $\tau$ onto $f^k(\tau)$.
  \item Let $\sigma$ be an $n$-cell. Then $f^{-k}(\sigma)$ is equal to the union of all $(n+k)$-cells $\tau$ with $f^k(\tau)=\sigma$.
  \item The number of $n$-vertices is less than or equal to $m\deg(f)^n$, the number of $n$-edges is $m\deg(f)^n$, and the number of $n$-tiles is $2\deg (f)^n$.
  \item The $n$-edges are precisely the closures of the connected components of
       $f^{-n}(\mathcal C)\setminus f^{-n}(\post(f))$. The $n$-tiles are precisely the closures of the connected components of $\S^2\setminus f^{-n}(\mathcal C)$.
  \item Every $n$-tile is an $m$-gon, i.e., the number of $n$-edges and $n$-vertices contained in its boundary is equal to $m$.
\end{enumerate}
\end{lem}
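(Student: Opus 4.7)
My plan is to handle parts (1)--(5) in order, with the first two serving as the workhorses for the rest. For (1), I would induct on $k$: the case $k=1$ is immediate from the inductive definition of $\mathcal{D}^{n+1}$ (an $(n+1)$-cell $\tau$ is by definition a connected set with $f(\tau)\in \mathcal{D}^n$ and $f|_\tau$ a homeomorphism onto $f(\tau)$), and the inductive step follows by writing $f^{k+1}|_\tau = f^k|_{f(\tau)}\circ f|_\tau$ and applying the hypothesis to the $(n+k)$-cell $f(\tau)$. For (2) I would again reduce to $k=1$ by induction: every connected component $c$ of $f^{-1}(\sigma)$ is an $(n+1)$-cell, because $f|_c$ is a proper local homeomorphism onto the connected, simply connected set $\sigma$ (a disk, arc, or point), hence a homeomorphism; conversely, every $(n+1)$-cell mapping onto $\sigma$ is such a component by (1). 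The key point I am borrowing here is Lemma~5.4 of \cite{BMExpanding}, which guarantees that these components are in fact cells of the asserted dimension.

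For the counting in (3), I would combine (1) and (2) with the basic degree identity for branched covers. For $n$-tiles: pick a generic (non-critical-value) point $p$ in the interior of a $0$-tile $T_0$; then $\#(f^n)^{-1}(p)=\deg(f)^n$, and by (1) each preimage lies in a distinct $n$-tile covering $T_0$, while by (2) every $n$-tile above $T_0$ contains exactly one such preimage. Summing over the two $0$-tiles gives $2\deg(f)^n$. The same argument applied to a generic interior point of a $0$-edge (again not a critical value, since critical values lie in $\post(f)\subseteq \mathcal{C}$ at the $0$-vertices) yields $m\deg(f)^n$ $n$-edges. For $n$-vertices, the inequality
\[
\#f^{-n}(q)\;\leq\;\sum_{p\in f^{-n}(q)}\deg_{f^n}(p)\;=\;\deg(f)^n
\]
at each of the $m$ postcritical $0$-vertices $q$ produces the bound $m\deg(f)^n$, with slack precisely when some preimage is a critical point of $f^n$.

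Parts (4) and (5) are then short consequences. For (4), applying (2) with $\sigma$ equal to each $0$-edge gives that $f^{-n}(\mathcal{C})$ is the union of all $n$-edges and $n$-vertices; removing $f^{-n}(\post(f))$ (the $n$-vertex set) leaves the open $n$-edges, whose closures are the $n$-edges, and the $n$-tiles appear as the closures of the connected components of the complement $\S^2\setminus f^{-n}(\mathcal{C})$. For (5), part (1) says $f^n|_\tau\:\tau\to f^n(\tau)=:T_0$ is a homeomorphism, so it sends $\partial\tau$ homeomorphically onto $\partial T_0$; since $\partial T_0$ consists of exactly $m$ $0$-edges meeting at the $m$ postcritical points, the preimage subdivision of $\partial\tau$ consists of exactly $m$ $n$-edges and $m$ $n$-vertices.

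The main technical obstacle is the statement built into my reduction in (1)--(2): that connected components of $f^{-1}(\sigma)$ for an $n$-cell $\sigma$ are genuinely cells (closed topological disks, arcs, or points) on which $f$ restricts to a homeomorphism. This relies on the local normal form $z\mapsto z^{\deg_f(p)}$ of a branched cover together with the fact that $\post(f)\subseteq \mathcal{C}$ forces critical values to sit on the $0$-skeleton, so that pullback under $f$ cannot split a cell or introduce interior branching. I would quote Lemma~5.4 of \cite{BMExpanding} for this statement rather than reprove it, since everything above is written to depend only on its conclusion.
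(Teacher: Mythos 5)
The paper gives no proof of this lemma: it is stated as a review of Proposition~6.1 of \cite{BMExpanding}, with the reader referred there for the argument. So there is no ``paper's own proof'' to compare against; what you have written is a blind reconstruction of the Bonk--Meyer argument, and it is essentially correct as a reconstruction.

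A few remarks on the substance. Your reduction of (1) and (2) to the case $k=1$ via $f^{k+1}|_\tau = f^k|_{f(\tau)}\circ f|_\tau$ and to Lemma~5.4 of \cite{BMExpanding} is the right move; that lemma is exactly where the topology (components of $f^{-1}(\sigma)$ are cells and $f$ restricts to homeomorphisms on them) is established, and it is legitimate, and indeed necessary, to import it rather than reprove it. Your counting in (3) hinges on the observation that the critical values of $f^n$ lie in $\{f^j(c):c\in\operatorname{crit}(f),\,1\le j\le n\}\subseteq\post(f)$, so that a point in the open interior of a $0$-tile or $0$-edge is never a critical value of any iterate; this is what makes the count via $\#(f^n)^{-1}(p)=\deg(f)^n$ work, and you state it correctly. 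The vertex bound is an inequality because some $n$-vertices may be critical points of $f^n$, exactly as you observe. Parts (4) and (5) are then routine consequences of (1) and (2) in the way you describe, though in (4) it would be cleaner to say explicitly that the open $n$-edges are pairwise disjoint (their closures can share only $n$-vertices, which have been removed) and each is open and closed in $f^{-n}(\mathcal C)\setminus f^{-n}(\post(f))$, so they are precisely the connected components. Overall, this is a faithful account of the standard proof, and your identification of Lemma~5.4 as the load-bearing technical input is correct.
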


Let $\sigma$ be an $n$-cell. Let $W^n(\sigma)$ be the union of the interiors of all $n$-cells intersecting with $\sigma$, and call $W^n(\sigma)$ the \emph{$n$-flower} of $\sigma$. In general, $W^n(\sigma)$ is not necessarily simply connected. The following lemma (from Lemma 7.2 in \cite{BMExpanding}) says that if $\sigma$ consists of a single $n$-vertex, then $W^n(\sigma)$ is simply connected.
\begin{lem} \label{flower}
Let $f\: \S^2\ra \S^2$ be a Thurston map. Let $\mathcal C$  be a Jordan curve containing $\post(f)$ and consider the corresponding cell decompositions of $\S^2$. If $\sigma$ is an $n$-vertex, then $W^n(\sigma)$ is simply connected. In addition, the closure of $W^n(\sigma)$ is the union of all $n$-tiles containing the vertex $\sigma$.
\end{lem}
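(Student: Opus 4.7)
The plan is to study $W^n(\sigma)$ via its image under $f^n$. Set $v=f^n(\sigma)\in\post(f)$, a $0$-vertex. The strategy is to identify $W^n(\sigma)$ as a connected component of $f^{-n}(W^0(v))$, observe that $f^n$ restricts to a branched covering of this component onto $W^0(v)$ with single branch value $v$ and single ramification point $\sigma$, and then conclude via covering space theory.

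I would first show that $W^0(v)$ is a topological disk. Its complement in $\S^2$ is the union of the $0$-edges not adjacent to $v$ together with the postcritical points other than $v$; this is a Jordan arc along $\mathcal C$ running between the two postcritical points that neighbor $v$ (or a single point if $\#\post(f)=2$). Hence by Sch\"onflies, $W^0(v)=\S^2\setminus(\text{Jordan arc})$ is homeomorphic to an open disk, in particular simply connected.

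Next I would establish the disjoint decomposition
\[
f^{-n}\bigl(W^0(v)\bigr)=\bigsqcup_{\sigma'\in f^{-n}(v)}W^n(\sigma').
\]
The inclusion $\supseteq$ follows from Lemma \ref{tilenumber}(1): any $n$-cell $c$ with $\sigma'\in c$ has $f^n(c^\circ)=f^n(c)^\circ\subseteq W^0(v)$, since $f^n(c)$ is a $0$-cell containing $v$. For $\subseteq$, a point $x\in f^{-n}(W^0(v))$ lies in the interior of a unique $n$-cell $c$, and the image $f^n(c)$ must be a $0$-cell containing $v$; pulling $v$ back through the homeomorphism $f^n|_c$ yields a preimage $\sigma'\in c$ with $x\in W^n(\sigma')$. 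Disjointness uses the same homeomorphism property: no $n$-edge or $n$-tile can have two distinct preimages of $v$ as vertices, since otherwise the corresponding $0$-edge or $0$-tile would have $v$ as a repeated vertex. Because $W^n(\sigma)$ is open and connected (any point in the interior of an $n$-cell containing $\sigma$ can be path-joined to $\sigma$ inside that cell), it is precisely the component of $f^{-n}(W^0(v))$ containing $\sigma$.

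Critical values of $f^n$ are contained in $\post(f)$ and $W^0(v)\cap\post(f)=\{v\}$, so $f^n\colon W^n(\sigma)\to W^0(v)$ is a branched cover with unique branch value $v$ and unique ramification point $\sigma$. Hence $f^n\colon W^n(\sigma)\setminus\{\sigma\}\to W^0(v)\setminus\{v\}$ is a proper connected cover of a punctured disk; any such finite cover is itself homeomorphic to a punctured disk, and filling in the puncture $\sigma$ shows that $W^n(\sigma)$ is an open disk, hence simply connected. For the second assertion, $W^n(\sigma)$ is a finite union of open cells, so its closure equals the union of the corresponding closed cells ($\sigma$, the $n$-edges at $\sigma$, and the $n$-tiles at $\sigma$); since $\sigma$ and each $n$-edge at $\sigma$ lie in the boundary of some $n$-tile at $\sigma$, the closure simplifies to the union of all $n$-tiles containing $\sigma$. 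The main technical obstacle is the clean disjoint decomposition of $f^{-n}(W^0(v))$; once that is in place, the remaining topological step is standard covering space theory.
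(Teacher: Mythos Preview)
The paper does not actually prove this lemma; it merely records it as a quotation of \cite[Lemma~7.2]{BMExpanding}. So there is no in-paper argument to compare against. Your argument is correct and is essentially the standard one: identify $W^n(\sigma)$ with the component of $f^{-n}(W^0(v))$ containing $\sigma$, use that $W^0(v)$ is a disk, and invoke the classification of finite covers of the punctured disk. The decomposition $f^{-n}(W^0(v))=\bigsqcup_{\sigma'\in f^{-n}(v)}W^n(\sigma')$ and the properness of $f^n|_{W^n(\sigma)}$ (which you use implicitly to get a genuine covering over $W^0(v)\setminus\{v\}$) are the only places requiring care, and both follow as you indicate from the homeomorphism property in Lemma~\ref{tilenumber}(1) and the fact that components of an open set in a locally connected space are clopen. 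One cosmetic remark: your claim that $W^n(\sigma)$ is connected is better justified by noting that each cell interior has $\sigma$ in its closure and $\sigma\in W^n(\sigma)$, so $W^n(\sigma)=\bigcup_c(\mathrm{int}(c)\cup\{\sigma\})$ is a union of connected sets with a common point; the phrasing ``path-joined to $\sigma$ inside that cell'' is slightly ambiguous since the path may touch the boundary of $c$.
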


\excise{
One of the most important properties of $n$-flowers is that they build a connection between $n$-tiles of different Jordan curves due to the following lemma in \cite[Lemma 7.12]{BMExpanding}.
\begin{lem}\label{flowertile}
Let $\mathcal C$ and $\mathcal C'$ be Jordan curves in $\S^2$ both containing
$\post(f)$. Then there exists a number $M$ such that each $n$-tile for $(f, \mathcal C)$
is covered by $M$ $n$-flowers for $(f, \mathcal C')$.
\end{lem}

\begin{re}
The exact same proof for this lemma shows that for $n'\geq n$, there exists a number $M$ such that each $n'$-tile $(f, \mathcal C)$ is covered by $M$ $n$-flowers for $(f, \mathcal C')$.
\end{re}
}

We obtain a sequence of cell decompositions of $\S^2$ from a Thurston map and a Jordan curve on $\S^2$. In many instances it is desirable that the local degrees of the map $f$ at all the vertices are bounded, and this can be obtained using the assumption that $f$ has no periodic critical points (see  \cite[Lemma 17.1]{BMExpanding}).
\begin{lem} \label{noperiodic}
Let $f : \S^2\ra \S^2$ be a branched covering map. Then f has no
periodic critical points if and only if there exists $N\in \N$ such that
\[\deg_{f^n}(p)\leq N,\]
for all $p\in \S^2$ and all $n\in \N$.
\end{lem}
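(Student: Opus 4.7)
The plan is to reduce everything to the chain rule for local degrees of iterates, namely
\[
\deg_{f^n}(p)=\prod_{k=0}^{n-1}\deg_f(f^k(p)),
\]
which follows immediately from writing $f^n$ near $p$ as a composition of local models $z\mapsto z^{d_k}$ and comparing topological degrees at $p$. Once this identity is in hand, the two directions decouple cleanly, since each factor on the right-hand side exceeds $1$ precisely when $f^k(p)$ lies in the finite set $\operatorname{crit}(f)$.

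For the ``only if'' direction, I would argue that along the orbit $p, f(p), \dots, f^{n-1}(p)$ each critical point of $f$ is visited at most once. Indeed, if $f^i(p)=f^j(p)=c\in\operatorname{crit}(f)$ with $i<j$, then $f^{j-i}(c)=f^{j-i}(f^i(p))=f^j(p)=c$, so $c$ is a periodic critical point, contrary to hypothesis. Let $M=\#\operatorname{crit}(f)$ and $D=\max_{c\in\operatorname{crit}(f)}\deg_f(c)$; both are finite because a branched covering of the compact sphere has only finitely many critical points, each of finite local degree. Then at most $M$ of the factors in the product for $\deg_{f^n}(p)$ are greater than $1$, and each such factor is at most $D$, so
\[
\deg_{f^n}(p)\leq D^{M},
\]
uniformly in $p$ and $n$. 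Setting $N=D^M$ finishes this direction.

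For the ``if'' direction, I argue contrapositively: suppose $c$ is a periodic critical point of period $k$, so $f^{jk}(c)=c$ for every $j\geq 0$. Using the chain rule along the orbit of $c$,
\[
\deg_{f^{nk}}(c)=\prod_{i=0}^{nk-1}\deg_f(f^i(c))\geq \prod_{j=0}^{n-1}\deg_f(f^{jk}(c))=\deg_f(c)^{n}\geq 2^{n},
\]
since $\deg_f(c)\geq 2$ by definition of a critical point. Hence $\deg_{f^m}(c)$ is unbounded in $m$, contradicting the existence of the uniform bound $N$.

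There is no serious obstacle here; the only point that requires a moment of care is justifying the chain rule, but this is a standard consequence of composing the canonical local forms $z\mapsto z^d$ at $p$, $f(p)$, $\dots$, $f^{n-1}(p)$ via the coordinate charts guaranteed by the branched covering hypothesis. Everything else is bookkeeping built on the finiteness of $\operatorname{crit}(f)$ and the elementary observation that an orbit cannot revisit a non-periodic point.
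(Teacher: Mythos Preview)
Your argument is correct and is the standard one. Note, however, that the paper does not actually supply a proof of this lemma: it is quoted verbatim from \cite[Lemma~16.1]{BMExpanding}, so there is no in-paper argument to compare against. Your proof via the multiplicativity formula $\deg_{f^n}(p)=\prod_{k=0}^{n-1}\deg_f(f^k(p))$, together with the pigeonhole observation that no critical point can be revisited along an orbit segment without becoming periodic, is exactly the argument one finds in the cited reference.
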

It can be shown using the lemma above as well as Proposition 12.5 and 13.1 in \cite{BMExpanding} that there exist expanding Thurston maps with periodic critical points.
However, \emph{from now on we will only consider Thurston maps that do not have periodic critical points.}

\begin{de} \label{defofm}
Let $f \: \S^2 \ra \S^2$ be an expanding Thurston map, and let
${\mathcal C}\subset \S^2$ be a Jordan curve containing $\post(f) $. Let $x, y \in \S^2$.
For $x \not= y$ we define
\begin{eqnarray*}
m_{f,\mathcal C}(x, y) := \min\{n\in \N_0 :\mbox{ there exist disjoint $n$-tiles }X \mbox{ and } Y \\
\mbox{ for }(f, \mathcal C)  \mbox{ with } x\in X \mbox{ and } y\in Y \}.
\end{eqnarray*}
If $x = y$, we define $m_{f,\mathcal C}(x, x)= \infty$.
\end{de}

The minimum in the definition above is always obtained since the
diameters of $n$-tiles go to $0$ as $n\ra \infty$. We usually drop one or both
subscripts in $m_{f,\mathcal C}(x, y)$ if $f$ or $\mathcal C$ is clear from the context. If
we define for $x,y\in \S^2$ and $x \not= y$,
\begin{eqnarray*}
m'_{f,\mathcal C}(x, y) = \max\{n\in \N_0 : \mbox{ there exist nondisjoint $n$-tiles }X \mbox{ and } Y  \\
\mbox{ for } (f, \mathcal C) \mbox{ with } x\in X \mbox{ and } y\in Y \},
\end{eqnarray*}
then $m_{f,\mathcal C}$ and $m'_{f,\mathcal C}$ are essentially the same up to a constant (see Lemma 8.6 (v) in \cite{BMExpanding}). Note that our notation for $m$ and $m'$ is switched from that used in \cite{BMExpanding}.

\begin{lem} \label{twom}
Let $m_{f,\mathcal C}$ and $m'_{f,\mathcal C}$ be defined as above. There exists a constant $k>0$, such that for any $x,y\in \S^2$ and $x\not=y$,
\[ m'_{f,\mathcal C}(x,y)-k\leq m_{f,\mathcal C}(x,y)\leq m'_{f,\mathcal C}(x,y)+1.\]
\end{lem}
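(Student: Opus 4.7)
The plan is to establish the two inequalities separately. The lower bound $m_{f,\mathcal{C}}(x,y)-k\le m'_{f,\mathcal{C}}(x,y)$ is soft and holds with $k=1$, following directly from the extremal nature of the definitions of $m_{f,\mathcal{C}}$ and $m'_{f,\mathcal{C}}$. The upper bound $m'_{f,\mathcal{C}}(x,y)\le m_{f,\mathcal{C}}(x,y)+1$ is the substantive part and uses the refinement structure of the cell decompositions $\D^n$ (Lemma~\ref{tilenumber}), the flower lemma (Lemma~\ref{flower}), and the bounded local degree provided by Lemma~\ref{noperiodic}.

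For the lower bound, set $m=m_{f,\mathcal{C}}(x,y)$ and fix any $n<m$. By the minimality of $m$, no pair of disjoint $n$-tiles contains $x$ and $y$ respectively. Since the $n$-tiles cover $\S^2$ (Lemma~\ref{tilenumber}), one can always pick $n$-tiles $X\ni x$ and $Y\ni y$, and the absence of a disjoint such pair forces $X\cap Y\ne\emptyset$. Thus non-disjoint $n$-tiles at $x,y$ exist for every $n<m$, giving $m'_{f,\mathcal{C}}(x,y)\ge m-1$.

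For the upper bound I argue by contradiction. Suppose $m'_{f,\mathcal{C}}(x,y)\ge m+2$, so there exist non-disjoint $(m+2)$-tiles $X,Y$ with $x\in X$, $y\in Y$, and fix $z\in X\cap Y$. By the nested refinement property (Lemma~\ref{tilenumber}) each $(m+2)$-tile is contained in a unique $(m+1)$-tile and a unique $m$-tile, so we obtain $m$-tiles $\tilde X\supseteq X$ containing $x$ and $\tilde Y\supseteq Y$ containing $y$. Then $z\in\tilde X\cap\tilde Y$, making this intersection a nonempty union of $m$-edges and $m$-vertices. The plan is then to use Lemma~\ref{flower} to localize $X$ and $Y$: each is contained in the closed $(m+2)$-flower based at the smallest $(m+2)$-cell at $x$ (respectively $y$), which is a simply connected neighborhood made of a bounded number of $(m+2)$-tiles by Lemma~\ref{noperiodic}. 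Together with the disjoint pair $X_0,Y_0$ witnessing $m=m_{f,\mathcal{C}}(x,y)$, the argument is to show that two levels of refinement suffice so that these small flowers at $x$ and at $y$ cannot both reach the common boundary $\tilde X\cap\tilde Y$, contradicting $z\in X\cap Y$.

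The main obstacle I anticipate is the case in which $x$ or $y$ itself lies on $\tilde X\cap\tilde Y$: then the flowers automatically touch that boundary, and one must instead verify that two levels of refinement reduce the portion of the shared boundary arc ``seen'' by the $(m+2)$-flower at $x$ enough to be separated from the portion seen at $y$. The bounded local degree from Lemma~\ref{noperiodic} is decisive here in controlling the combinatorial spread of flowers under refinement; without it, flowers could in principle grow arbitrarily and the two-level gap would fail.
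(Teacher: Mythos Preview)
The paper does not actually prove this lemma; it simply cites Lemma 8.6(v) of \cite{BMExpanding}. So there is no paper-proof to compare against, and the question is whether your argument stands on its own.

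Your lower bound is fine: for every $n<m_{f,\mathcal C}(x,y)$ the defining minimum is not attained, so no disjoint $n$-tile pair exists at $x,y$, hence every such pair is nondisjoint and $m'_{f,\mathcal C}(x,y)\ge m_{f,\mathcal C}(x,y)-1$.

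The upper bound, however, has a genuine gap. You invoke a ``nested refinement property'' and attribute it to Lemma~\ref{tilenumber}, asserting that each $(m+2)$-tile sits inside a unique $(m+1)$-tile and a unique $m$-tile. Lemma~\ref{tilenumber} says no such thing: it records that $f^k$ carries $(n+k)$-cells to $n$-cells and counts cells, but it does \emph{not} assert that $\D^{n+1}$ refines $\D^n$. That refinement holds only when $\mathcal C$ is $f$-invariant, an assumption introduced later in the paper (as $(*)$) and not in force here. Without it, an $(m+2)$-tile need not lie in any $m$-tile at all, and your containment chain $X\subseteq\tilde X$, $Y\subseteq\tilde Y$ collapses. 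Even granting refinement, the remainder of your sketch is not a proof: you yourself flag the boundary case $x\in\tilde X\cap\tilde Y$ as an obstacle and then appeal loosely to Lemma~\ref{noperiodic} without showing how bounded local degree forces the $(m+2)$-flowers at $x$ and $y$ to miss each other after exactly two refinements. The actual mechanism behind the upper inequality in \cite{BMExpanding} is the uniform comparison between tile diameters at level $n+k$ and separation of disjoint $n$-cells coming from the expansion hypothesis (essentially the content of Lemma~\ref{charvisual}); your outline does not engage that mechanism.
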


\begin{de}\label{visual}
Let $f \: \S^2\ra \S^2$ be an expanding Thurston map and let
$d$ be a metric on $\S^2$. The metric $d$ is called a \emph{visual metric} for $f$ if there
exists a Jordan curve $\mathcal C\subset \S^2$ containing $\post(f) $, constants $\Lambda > 1$ and  $C \geq 1$ such that
\[\frac1{C}\Lambda^{-m_{f,\mathcal C}(x, y)} \leq d(x, y) \leq C\Lambda^{-m_{f,\mathcal C}(x, y)}\]
for all $x, y \in \S^2$. The number $\Lambda$ is called the \emph{expansion factor} of the visual metric $d$.
\end{de}

Proposition 8.9 in \cite{BMExpanding} states that for any expanding Thurston map $f\:\S^2\ra \S^2$, there exists a visual metric for $f$ that induces the standard topology on $\S^2$. Lemma 8.10 in the same paper gives the following characterization of visual metrics.
\begin{lem} \label{charvisual}
Let $f \: \S^2 \ra \S^2$ be an expanding Thurston map. Let $\mathcal C\subset \S^2$
be a Jordan curve containing $\post(f)$, and $d$ be a visual metric for $f$ with
expansion factor $\Lambda > 1$. Then there exists a constant $C > 1$ such that
\begin{enumerate}
  \item $d(\sigma,\tau)\geq (1/C)\Lambda^{-n}$ whenever $\sigma$ and $\tau$ are disjoint $n$-cells,
  \item $(1/C)\Lambda^{-n}\leq \diam (\tau)\leq C\Lambda^{-n}$ for $\tau$ any $n$-edge or $n$-tile.
\end{enumerate}
Conversely, if $d$ is a metric on $\S^2$ satisfying conditions $(1)$ and $(2)$
for some constant $C>1$, then $d$ is a visual metric with expansion
factor $\Lambda > 1$.
\end{lem}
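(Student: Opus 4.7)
The plan is to prove the two implications separately, with Lemma~\ref{twom} providing the crucial bridge between the quantities $m_{f,\mathcal{C}}$ (built from disjoint tiles) and $m'_{f,\mathcal{C}}$ (built from nondisjoint tiles). For the converse direction, suppose $d$ satisfies (1) and (2) with constant $C$. Given $x \neq y$, set $n = m_{f,\mathcal{C}}(x,y)$ and pick disjoint $n$-tiles $X \ni x, Y \ni y$: then (1) immediately yields $d(x,y) \geq d(X,Y) \geq (1/C)\Lambda^{-n}$, which is the lower half of the visual-metric inequality. For the upper half, set $n' = m'_{f,\mathcal{C}}(x,y)$, take nondisjoint $n'$-tiles sharing a common point $z$, and apply the triangle inequality together with (2) to obtain $d(x,y) \leq \diam X + \diam Y \leq 2C\Lambda^{-n'}$; Lemma~\ref{twom} then converts $n'$ to $m_{f,\mathcal{C}}(x,y)$ at the cost of a bounded multiplicative constant.

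For the forward direction, assume $d$ is visual with expansion factor $\Lambda$ and constant $C_0$. The upper bound in (2) is symmetric to the converse argument: if $x, y$ lie in a common $n$-edge or $n$-tile $\tau$, then both lie in a common $n$-tile $T$, so setting $X = Y = T$ in the definition of $m'$ gives $m'_{f,\mathcal{C}}(x,y) \geq n$, hence $m_{f,\mathcal{C}}(x,y) \geq n - 1$ by Lemma~\ref{twom}, and the visual-metric estimate produces $d(x,y) \leq C_0 \Lambda^{1-n}$. The lower bound in (2) will follow from (1) once (1) is available: every $n$-edge has two distinct endpoints, and every $n$-tile is an $m$-gon with $m = \#\post(f) \geq 3$ distinct $n$-vertices on its boundary, so $\tau$ always contains two distinct $n$-vertices, which are disjoint $n$-cells, and (1) then gives $\diam(\tau) \geq (1/C)\Lambda^{-n}$.

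The substantive step is thus (1) in the forward direction. It suffices to establish that $m_{f,\mathcal{C}}(x,y) \leq n + k_0$ uniformly in $x \in \sigma$, $y \in \tau$, and in $n$, where $k_0$ depends only on $f$ and $\mathcal{C}$; the visual-metric inequality then gives (1) with $C = C_0 \Lambda^{k_0}$. Equivalently, I must exhibit disjoint $(n + k_0)$-tiles containing $x$ and $y$ respectively. The natural route is to show that there is a uniform $k_0$ such that, for any pair of disjoint $n$-cells $\sigma, \tau$, the \emph{$(n+k_0)$-stars} of $\sigma$ and $\tau$---the unions of $(n+k_0)$-tiles meeting $\sigma$ and $\tau$ respectively---are themselves disjoint; any $(n+k_0)$-tile through $x$ then lies in the first star and any through $y$ in the second, producing the required disjoint pair. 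Establishing this uniform $k_0$ is the technical heart of the proof: the statement must hold simultaneously for all levels $n$ and all pairs of disjoint $n$-cells. The argument combines the self-similar structure of the cell decompositions obtained by iterating $f$, which reduces local configurations to the finitely many at level zero, with the topological expansion property of $f$ from Definition~\ref{expandingmap}, which forces $n$-tile meshes to vanish uniformly. This uniformity step is the one I expect to be the main obstacle.
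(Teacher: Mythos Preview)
The paper does not prove this lemma; it is quoted as Lemma~8.10 of \cite{BMExpanding}, so there is no in-paper argument to compare against. Your outline is essentially the standard one and is correct as far as it goes. The converse direction is clean, and in the forward direction your reductions---the upper bound in (2) via Lemma~\ref{twom}, and the lower bound in (2) from (1) via two distinct $n$-vertices of $\tau$---are right.

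For the key step, condition~(1) in the forward direction, you correctly identify the target (a uniform $k_0$ making the $(n+k_0)$-stars of disjoint $n$-cells disjoint) but stop short of executing it. The tool you need is already stated in the paper as Lemma~\ref{disjointcell1}: any connected union of $(n+k)$-tiles meeting two disjoint $n$-cells contains at least $D_k$ tiles. Since $f$ is expanding one has $D_k\to\infty$ (equivalently $\Lambda_0(f)>1$ by Proposition~\ref{expansionfactor}), so choose $k_0$ with $D_{k_0}\geq 3$. If $X\ni x$ and $Y\ni y$ were nondisjoint $(n+k_0)$-tiles, then $X\cup Y$ would be a connected union of at most two $(n+k_0)$-tiles meeting the disjoint $n$-cells $\sigma$ and $\tau$, contradicting $D_{k_0}\geq 3$; hence any such pair is disjoint and $m_{f,\mathcal C}(x,y)\leq n+k_0$, which gives (1). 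Your proposed route via ``self-similarity reducing local configurations to the finitely many at level zero'' is more than is needed and is not literally available here: the lemma does not assume $\mathcal C$ is $f$-invariant, so the decompositions $\D^n$ need not be nested. Lemma~\ref{disjointcell1} handles all levels uniformly without that hypothesis.
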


Let $(X,d)$ be a metric space. For $\alpha\geq 0$ and for any Borel subset $S\subseteq X$, the \emph{$\alpha$-dimensional Hausdorff measure} $H^{\alpha}(S)$ of $S$ is defined as
\[H^{\alpha}(S):=\lim_{\epsilon\ra 0+}H^{\alpha}_{\epsilon}(S),\]
where
\[H^{\alpha}_{\epsilon}(S)=\inf\left\{ \sum_{i=1}^{\infty} \diam(U_i)^{\alpha} : S\subseteq \bigcup_{i=1}^{\infty}{U_i}\mbox{ and } \diam(U_i)<\epsilon \right\} \]
where the infimum is taken over all countable covers $\{U_i\}$ of $S$. The \emph{Hausdorff dimension} $\dim_H(X)$ of a metric space $X$ is the infimum of the set of $\alpha\in [0,\infty)$ such that $\alpha$-dimensional Hausdorff measure of $X$ is zero:
\[ \dim_H (X) := \inf\left\{\alpha\geq 0 : H^{\alpha}(X)=0\right\} .\]
The \emph{dimension} of a probability measure $\mu$ on $X$ is
\[ \dim \mu := \inf\{\dim_H(E) \: E\subset X \mbox{ is measurable and } \mu(E) = 1\}.\]

The following theorem (\cite[Theorem 4]{MeyDimension}) gives a characterization of Latt\`es maps among among all  expanding rational Thurston maps.
\begin{thm} \label{dim}
Let $f\:\widehat{\C}\ra \widehat{\C}$ be an expanding rational Thurston map. The map $f$ is a Latt\`es map if and only if there exists a visual metric $d$ on $\widehat \C$ such that the dimension of the (normalized standard) Lebesgue measure with respect to the metric $d$ is equal to $2$.
\end{thm}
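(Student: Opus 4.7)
The plan is to treat the two directions separately. The forward (``only if'') direction will be constructive via the torus-quotient structure, while the backward direction invokes Zdunik's rigidity theorem \cite{ZduParabolic} cited in the Background section.

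For the forward direction, I would use diagram~\eqref{lat}: the conformality requirement forces the affine lift of $\bar A$ to be of the form $z\mapsto \alpha z+c$ with $|\alpha|^2=\deg f$. Pushing forward the flat Euclidean metric from $\T$ via the finite branched covering $\Theta$ yields a length metric $d$ on $\widehat\C$, smooth and locally Euclidean off the finite branch locus of $\Theta$, with cone singularities at the branch points. The diagram gives $d(f(x),f(y))=|\alpha|\cdot d(x,y)$ locally, and Lemma~\ref{charvisual} then identifies $d$ as a visual metric with expansion factor $\Lambda=|\alpha|=\sqrt{\deg f}$. Because $\Theta$ is locally of the form $z\mapsto z^k$ at each of the finitely many branch points, the Hausdorff $2$-measure of $d$ is mutually absolutely continuous with the standard Lebesgue measure $\lambda$ on $\widehat\C$; hence $\lambda$ has dimension $2$ with respect to $d$.

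For the backward direction, suppose $d$ is a visual metric with $\dim\lambda=2$. First I would pin down $\Lambda$: Lemma~\ref{tilenumber}(3) and Lemma~\ref{charvisual}(2) give $\dim_H(\widehat\C,d)\leq \log(\deg f)/\log\Lambda$, so $\dim\lambda\leq\dim_H(\widehat\C,d)$ yields $\Lambda\leq\sqrt{\deg f}$. An averaging argument using $\lambda(\widehat\C)=1$ and the count $2(\deg f)^n$ of $n$-tiles should force the typical $n$-tile to have Lebesgue measure $\asymp(\deg f)^{-n}$ and $d$-diameter $\asymp (\deg f)^{-n/2}$, so $\Lambda=\sqrt{\deg f}$. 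The measure of maximal entropy $\mu_f$, which distributes mass $\asymp(\deg f)^{-n}$ across $n$-tiles, then becomes comparable to the $2$-dimensional Hausdorff measure of $d$. Next I would upgrade $\dim\lambda=2$ to the absolute continuity $\lambda\ll\mu_f$ using holomorphic distortion control (Koebe's theorem), and then invoke Zdunik's dichotomy for $\mu_f$: namely $\mu_f\ll\lambda$ precisely when $f$ is Latt\`es, and otherwise $\mu_f\perp\lambda$. Since $\lambda\ll\mu_f$ is incompatible with $\mu_f\perp\lambda$, we must be in the Latt\`es case.

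The main obstacle is the upgrade from ``$\dim\lambda=2$'' to ``$\lambda\ll H^2_d$'': the former is strictly weaker in general. I expect closing this gap to require the full strength of holomorphic distortion, allowing one to show that the $\log|f'|$-cocycle is cohomologous to the constant $\tfrac{1}{2}\log\deg f$ (an equivalent reformulation of the Latt\`es condition). This is the step where the \emph{rationality} of $f$, as opposed to merely its topological dynamics, is indispensable, and is where I expect the technical heart of Meyer's proof to lie.
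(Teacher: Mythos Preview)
This theorem is not proved in the present paper: it is quoted verbatim as \cite[Theorem~4]{MeyDimension} and used as a black box in the proof of Theorem~\ref{suff}. So there is no ``paper's own proof'' to compare your proposal against.

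That said, your outline is broadly aligned with how the result is actually established in \cite{MeyDimension}. Your forward direction is essentially correct and standard. For the backward direction you have correctly located the crux: the hypothesis $\dim\lambda=2$ is a dimension statement, not an absolute-continuity statement, and passing from the former to the latter is where the real work lies. Your proposed route through Zdunik's dichotomy is the right endgame, but the step ``$\dim\lambda=2\Rightarrow\lambda\ll\mu_f$'' is not something Koebe distortion alone will hand you; in Meyer's argument this is where one shows that the Lyapunov exponent of $\lambda$ (which exists by ergodicity of $\lambda$ for rational maps with $J(f)=\widehat\C$) must equal $\tfrac12\log\deg f$, and then invokes the Ledrappier--Ma\~n\'e dimension formula together with Zdunik's rigidity. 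Your parenthetical about the $\log|f'|$-cocycle being cohomologous to a constant is the correct reformulation, but you should be aware that deducing this cohomology statement from $\dim\lambda=2$ is itself the nontrivial input, not a consequence of distortion estimates.
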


\section{Latt\`es and Latt\`es-type Maps} \label{lattes}
\noindent
In this section, we introduce Latt\`es-type maps and establish some of their properties. We also briefly review the concept of the orbifold $O_f$ of a Thurston map $f$.

Let $\L,\L'\subset \R^2$ be lattices.
We will always assume that a lattice has rank $2$.  The quotients $\T=\R^2/\L$ and $\T'=\R^2/\L'$ are tori. Let $A\: \R^2\ra \R^2$ be an affine orientation-preserving map such that for any two points $p,q\in \R^2$ with $p-q\in \L$, we have $A(p)-A(q)\in \L'$. The quotient of the map $A$,
\[\bar A\: \T\ra \T',\]
is called an (orientation-preserving) \emph{torus homomorphism}.  If the map $\bar A$ is also bijective, we call the map $\bar A\: \T\ra \T'$ a \emph{torus isomorphism} between $\T$ and $\T'$. If $\L =\L'$, we call the induced map $\bar A\: \T\ra \T$ a \emph{torus endomorphism}. If in addition, the map $\bar A$ is a torus isomorphism, then we call $\bar A$ a \emph{torus automorphism} of $\T$. If $\L =\Z^2$, then
an affine map $A$ that induces a torus endomorphism has the form
\begin{equation} \label{affine}
A\left(\begin{array}{c}
    x \\
    y \\
  \end{array}\right)
=L\left(\begin{array}{c}
    x \\
    y \\
  \end{array}\right)+
  \left(\begin{array}{c}
    x_0 \\
    y_0 \\
  \end{array}\right)\mbox{ for }\left(\begin{array}{c}
    x \\
    y \\
  \end{array}\right)\in \R^2,
  \end{equation}
where $L$ is a $2\times 2$ matrix with integer entries and positive determinant, and
$x_0,y_0\in \Z$. In this case, the map $A$ is a torus automorphism if and only if $L\in \SL(2,\Z)$.

\excise{

Let $\L,\L'\subset \R^2$ be rank-2 lattices, so the quotient $\T=\R^2/\L$ and $\T'=\R^2/\L'$ are tori. Let $A\: \R^2\ra \R^2$ be an affine orientation-preserving map. The map $A$ induces a \emph{torus homomorphism} $\bar A\: \T\ra \T'$. If in addition, $A$ maps $\L$ into $\L'$, we call the torus homomorphism $\bar A$ \emph{lattice-preserving}. If the map $\bar A$ maps is also bijective, we call the map $\bar A\: \T\ra \T'$ a \emph{torus isomorphism} between $\T$ and $\T'$. If we let $\L =\L'$, we call the induced map $\bar A\: \T\ra \T$ a \emph{torus endomorphism}. If in addition, the map $\bar A$ is an torus isomorphism, then we call $\bar An$ a \emph{torus automorphism} of $\T$. An affine map $A$ which induces a lattice-preserving torus endomorphism has the form
\begin{equation} \label{affine}
An\left(\begin{array}{c}
    x \\
    y \\
  \end{array}\right)
=L\left(\begin{array}{c}
    x \\
    y \\
  \end{array}\right)+
  \left(\begin{array}{c}
    x_0 \\
    y_0 \\
  \end{array}\right),
\end{equation}
where $L\in M_2^+(\Z)$ which is the set of $2\times 2$ matrices with integer entries and positive determinant,
and $\left(\begin{array}{c}x_0 \\y_0 \\ \end{array}\right)\in \L$. If $A$ induces torus automorphism, then $L\in \SL(2,\Z)$.
}

The matrix $L$ is uniquely determined by $\bar A$. Indeed, if affine maps $A$ and $A'$ induce the same torus endomorphism, then $A$ and $A'$ differ by a translation by $\lambda$ according to equation \eqref{affine}, where $\lambda\in \L$. So we can uniquely define the \emph{determinant, trace} and \emph{eigenvalues} of a torus endomorphism $\bar A$ and 
the affine map $A$ to be the determinant, trace and eigenvalues of the matrix $L$ as in equation \eqref{affine}.
Denote\[\det \bar A=\det A= \det L, \quad {\rm tr}(\bar A)={\rm tr}(A)={\rm tr}( L).\]


\begin{de}
We call $\Theta\:\T\ra \S^2$ a \emph{branched cover induced by a rigid action of a group} $G$ on $\T$ if every element of $g\in G$ acts as a torus automorphism and for any $t,t'\in \T$, we have $\Theta(t)=\Theta(t')$ if and only if there exists $g\in G$ such that
\[t=g(t').\]
\end{de}
An equivalent formulation is that $\Theta$ induces a canonical homeomorphism from the quotient space $\T/G$ onto $\S^2$.

\begin{de} \label{lattetype}
Let $\L\subset \R^2$ be a lattice. Let $\bar A$ be a torus endomorphism of $\T=\R^2/\L$ whose eigenvalues have absolute values greater than $1$. Let $\Theta\:\T\ra \S^2$ be a branched covering map induced by a rigid action of a finite cyclic group on $\T$. A map $f\:\S^2\ra \S^2$ is called a \emph{Latt\`es-type map (with respect to a lattice $\L$)} if there exists $\bar A$ as above such that the semi-conjugacy relation $f\circ \Theta =\Theta \circ \bar A$ is satisfied, i.e., the following diagram commutes:
\[\begin{CD}
\T @>\bar{A}>> \T\\
@V\Theta VV @VV\Theta V\\
\S^2 @>f>> \S^2.
\end{CD}.\]
In addition, if a Latt\`es-type map $f$ is rational, then the map $f$ is called a \emph{Latt\`es map}.
\end{de}
We remark that this definition of Latt\`es maps is equivalent to the definition of Latt\`es maps in \cite{MilLattes}.

\begin{eg} \label{latteseg}
Let $A\:\C\ra\C$ be the $\C$-linear map defined by $z\mapsto 2z$, and let $\wp\:\C\ra \widehat\C$ be the Weierstrass elliptic function with respect to the lattice $2\Z^2$. Let $\bar A\: \T\ra \T=\C/2\Z^2$ be induced by $A$, and let $\Theta\:\T\ra\widehat \C$ be induced by $\wp$. Then the map $f$ satisfying the following diagram is well-defined and is a Latt\`es-type map:
\[\begin{CD}
\T @>{\bar A}>> \T\\
@V\Theta VV @VV\Theta  V\\
\widehat \C @>f>> \widehat \C.
\end{CD}.\]
In fact, the map $f$ is a Latt\`es map (see Example \ref{latteseg3}).
We can think of the map $f$ as follows (see the picture below): observe that the unit square $[0,1]^2$ in $\C$ can be conformally mapped to the upper half plane in $\widehat{\C}$; we glue two unit squares $[0,1]^2$ together along their boundaries , and get a pillow-like space which is homeomorphic to $\widehat{\C}$; we color one of the squares black and the other white; we divide each of the squares into 4 smaller squares of half the side length, and color them with black and white in checkerboard fashion; we map one of the small black pillows to the bigger black pillow by Euclidean similarity, and extend the map to the whole pillow-like space by reflection. We refer the reader to Section 1.2 in \cite{BMExpanding} for further discussion of this example.
\vspace{.5cm}
\begin{center}
\mbox{ \scalebox{0.7}{\includegraphics{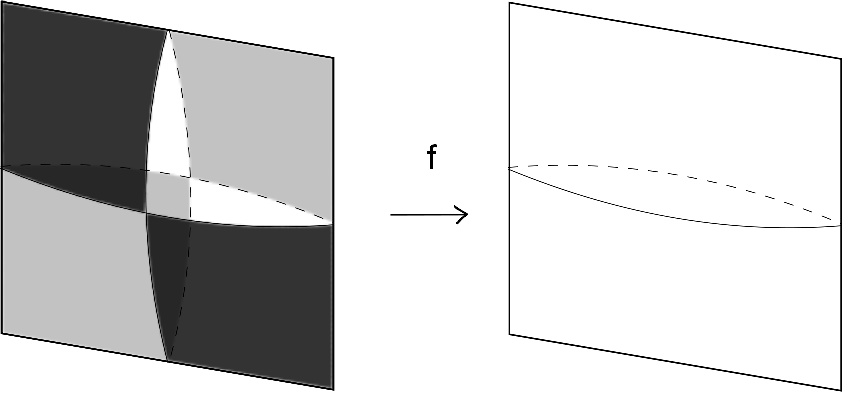}}}
\end{center}
\end{eg}

\vspace{.5cm}

\begin{eg} \label{nonlatteseg}
Let $A\:\R^2\ra\R^2$ be the $\R$-linear map defined by
\begin{equation*}
A\left(\begin{array}{c}
    x \\
    y \\
  \end{array}\right)
=\left(\begin{array}{cc}
    2&0 \\
    0&3 \\
  \end{array}\right)\left(\begin{array}{c}
    x \\
    y \\
  \end{array}\right)
  \mbox{ for }\left(\begin{array}{c}
    x \\
    y \\
  \end{array}\right)\in \R^2,
  \end{equation*}
and let $\wp$ be the Weierstrass elliptic function with respect to the lattice $2\Z^2$.
Let $\bar A\: \T\ra \T=\R^2/2\Z^2$ be induced by $A$, and $\Theta\:\T\ra\S^2$ be induced by $\wp$. Then the map $g$ satisfying the following diagram is well-defined and is a Latt\`es-type map:
\[\begin{CD}
\T @>{\bar A}>> \T\\
@V\Theta VV @VV\Theta  V\\
\S^2 @>g>> \S^2.
\end{CD}.\]
In fact, the map $g$ is not topologically conjugate to a Latt\`es map (see Example \ref{latteseg3}). See the picture below:
\vspace{0.5cm}

\begin{center}
\mbox{ \scalebox{0.7}{\includegraphics{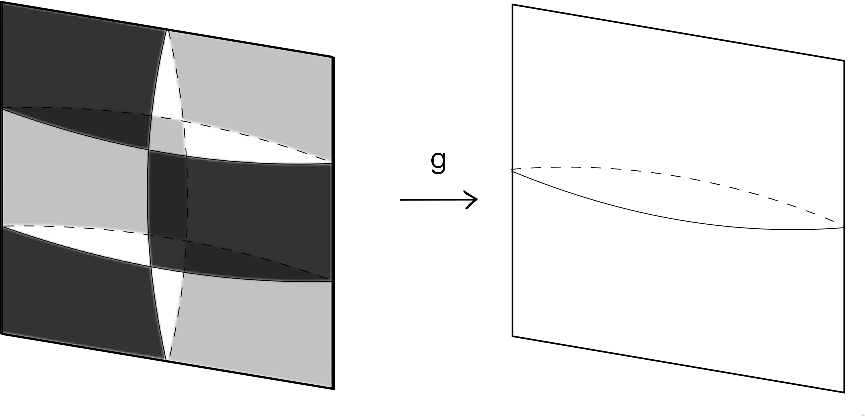}}}
\end{center}

\bigskip
We refer the reader to Example 12.13 in \cite{BMExpanding} for further discussion of this example.
\end{eg}

\begin{lem} \label{integerlattice}
A Latt\`es-type map $f$ over any lattice $\L$ is a Latt\`es-type map over the integer lattice $\Z^2$.
\end{lem}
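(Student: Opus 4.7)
The plan is to conjugate the entire Lattès-type diagram by an orientation-preserving linear isomorphism of $\R^2$ that carries $\Lambda$ to $\Z^2$, and then verify that every condition in Definition \ref{lattetype} is preserved by this conjugation.

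First I would choose a positively oriented $\Z$-basis $(v_1,v_2)$ of $\Lambda$ (such a basis exists since $\Lambda$ has rank $2$; if the first basis one picks is negatively oriented, swap the two vectors). Define the $\R$-linear isomorphism $T\:\R^2\ra\R^2$ by $T(v_1)=(1,0)$ and $T(v_2)=(0,1)$. Then $T$ is orientation-preserving and $T(\Lambda)=\Z^2$, so $T$ descends to a torus isomorphism $\bar T\: \T=\R^2/\Lambda \ra \T_0=\R^2/\Z^2$.

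Next, given a presentation of $f$ as a Lattès-type map over $\Lambda$ via $f\circ\Theta=\Theta\circ\bar A$ with rigid action by a finite cyclic group $G$, I would set
\[\bar A':=\bar T\circ \bar A\circ \bar T^{-1}\:\T_0\ra\T_0, \qquad \Theta':=\Theta\circ \bar T^{-1}\:\T_0\ra \S^2,\]
and check $f\circ \Theta'=\Theta'\circ \bar A'$ by the trivial computation
\[f\circ\Theta\circ\bar T^{-1}=\Theta\circ\bar A\circ\bar T^{-1}=\Theta\circ\bar T^{-1}\circ(\bar T\circ\bar A\circ\bar T^{-1}).\]
The lift of $\bar A'$ to $\R^2$ is $T\circ A\circ T^{-1}$, which is affine, orientation-preserving (as a composition of orientation-preserving maps), and preserves $\Z^2$ because $A(\Lambda)\subseteq \Lambda$ and $T(\Lambda)=\Z^2$. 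Its linear part $TLT^{-1}$ is conjugate to $L$, so it has the same eigenvalues, and in particular their absolute values remain $>1$. Hence $\bar A'$ is a torus endomorphism of $\T_0$ of the required type.

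It remains to verify that $\Theta'$ is a branched cover induced by a rigid action of a finite cyclic group on $\T_0$. Set $G':=\bar T G\bar T^{-1}$; since $\bar T$ is a bijection, $G'$ is again a finite cyclic group (of the same order). Each $g'=\bar T g\bar T^{-1}\in G'$ lifts to $TBT^{-1}$ where $B$ is the affine lift of $g$, and the same argument as for $\bar A'$ shows that $g'$ is an orientation-preserving torus automorphism of $\T_0$. Finally, for $s,s'\in \T_0$ we have $\Theta'(s)=\Theta'(s')$ iff $\Theta(\bar T^{-1}s)=\Theta(\bar T^{-1}s')$, iff there is $g\in G$ with $\bar T^{-1}s=g(\bar T^{-1}s')$, iff $s=(\bar T g\bar T^{-1})(s')$ for some element of $G'$; so the action of $G'$ on $\T_0$ is rigid and realizes $\Theta'$. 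This produces the commutative diagram exhibiting $f$ as a Lattès-type map over $\Z^2$.

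The only real subtlety is the orientation condition built into the definition of a torus endomorphism, which is why I insist on choosing a positively oriented basis of $\Lambda$; beyond that, the argument is a routine transport of structure along the isomorphism $\bar T$. No delicate analysis is required.
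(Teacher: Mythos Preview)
Your proof is correct and follows essentially the same approach as the paper: both conjugate the Latt\`es-type diagram by a linear isomorphism carrying $\Lambda$ to $\Z^2$ (your $T$ is the inverse of the paper's $L$), then define the new torus endomorphism and branched cover by transport of structure. You are in fact more careful than the paper, explicitly verifying the eigenvalue condition and the rigid group action for $\Theta'$, whereas the paper simply asserts these.
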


\begin{proof}
For any Latt\`es-type map $f$ over a lattice $\L$, let $\T=\R^2/\L$. There exist a torus endomorphism
\[\bar A\:\T\ra \T\]
and a branched covering map $\Theta\: \T\ra \S^2$ induced by a rigid action of a fixed cyclic group on $\T$,
such that $f\circ \Theta =\Theta \circ \bar A$. Let $\T_0=\R^2/\Z^2$.  Since $\L$ is a lattice with rank 2, there is an orientation-preserving isomorphism $L\: \Z^2 \ra \L$. This isomorphism $L$ can be extended to an $\R$-linear map of $\R^2$, still denoted by $L$, which induces a torus isomorphism $\bar L\:\T_0\ra \T$. Define a map
\[\bar A_0\:\T_0\ra \T_0\]
by $\bar A_0=\bar L^{-1}\circ \bar A\circ \bar L$, and a branched covering map $\Theta_0\: \T_0\ra \S^2$ by $\Theta_0=\Theta\circ \bar L$. Then $\bar A_0$ is a torus endomorphism and the branched covering map $\Theta_0$ is induced by a rigid action of a finite cyclic group on $\T_0$. In addition,
\[f\circ \Theta_0=f\circ\Theta\circ \bar L=\Theta \circ \bar A\circ \bar L= (\Theta\circ \bar L)\circ (\bar L^{-1}\circ \bar A\circ \bar L)=\Theta_0\circ \bar A_0\](see the diagram below). It follows that the map $f$ is a Latt\`es-type map over the lattice $\Z^2$.
\[\xymatrix@!0{
  & &\T_0 \ar[dll]_{\bar L} \ar[rr]^{\bar A_0} \ar[dddll]^{\Theta_0}
      &  & \T_0 \ar[dddll]^{\Theta_0} \ar[dll]_{\bar L}       \\
  \T \ar[rr]^{\bar A}\ar[dd]_{\Theta}
    &  &   \T \ar[dd]^{\Theta}& \\
  & & &  &                \\
  \S^2 \ar[rr]^{f} & &  \S^2  &     }\]
\end{proof}

\begin{re}
Notice that the proof of this lemma works for   
any rank-2 lattice besides the integer lattice $\Z^2$. Hence, we can choose the lattice for our convenience.
\end{re}

\begin{lem} \label{group}
If a branched covering map $\Theta\:\T\ra \S^2$ is induced by a rigid action of a finite cyclic group $G$ on $\T$, then $G$ acts on $\T$ by rotation   around a fixed point with order of $G$ either $2,3,4$ or $6$.
\end{lem}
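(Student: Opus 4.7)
The plan is to translate the statement into one about a single integer matrix and then invoke the classical crystallographic restriction in dimension two. I would fix a generator $\bar A$ of $G$, say of order $n=|G|$, and lift it to an orientation-preserving affine self-map $A(x)=Lx+b$ of $\R^2$. Since $\bar A$ is a torus automorphism (bijective and orientation-preserving), equation \eqref{affine} forces $L\in\SL(2,\Z)$. The relation $\bar A^n=\mathrm{id}_\T$ lifts to a translation of $\R^2$ by an element of $\Lambda$, and comparing the linear parts of $A^n$ and of this translation gives $L^n=I$, so $L$ is an element of $\SL(2,\Z)$ of finite order dividing $n$.

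Next I would apply the crystallographic restriction: a matrix in $\SL(2,\Z)$ of finite order has order in $\{1,2,3,4,6\}$. A short self-contained argument is that the eigenvalues of $L$ are roots of unity whose sum is the integer $\mathrm{tr}(L)$; being a real $2\times 2$ matrix, they are either $\pm 1$ or a conjugate pair $e^{\pm i\theta}$, with $2\cos\theta\in\{-2,-1,0,1,2\}$. This forces $\theta\in\{0,\pi/3,\pi/2,2\pi/3,\pi\}$ and hence $n\in\{1,2,3,4,6\}$. The case $n=1$ would make $G$ trivial and $\Theta$ a homeomorphism, contradicting $\T\not\cong\S^2$, so it is ruled out.

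To finish I would produce the common fixed point and identify the action as a rotation. For $n\in\{2,3,4,6\}$ no eigenvalue of $L$ equals $1$, so $L-I$ is invertible on $\R^2$; therefore $(L-I)x=-b$ has a unique solution $x_0\in\R^2$, whose image $\bar p_0\in\T$ is a fixed point of $\bar A$. The same reasoning applied to $L^k$ for $1\leq k<n$ shows that each nontrivial element of $G$ has a unique fixed point on $\T$; since $\bar p_0$ is clearly fixed by every power of $\bar A$, all elements of $G$ share the single fixed point $\bar p_0$. Finally, the averaged inner product $\langle v,w\rangle_L:=\sum_{k=0}^{n-1}\langle L^k v,L^k w\rangle$ is $L$-invariant, so with respect to suitable linear coordinates centered at $x_0$ the map $\bar A$ acts as a proper Euclidean rotation of angle $2\pi/n$ around $\bar p_0$, as required.

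I expect the crystallographic restriction to be the only substantive step, but the trace argument above reduces it to a one-line calculation. The detail I would be most careful about is confirming that every nontrivial element of $G$, and not merely the generator, has $\bar p_0$ as its unique fixed point, so that the phrase \emph{rotation around a fixed point} is justified for the full group and not just for a chosen generator.
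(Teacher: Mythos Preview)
Your approach is essentially the paper's: lift a generator to an affine map $A(x)=Lx+b$ with $L\in\SL(2,\Z)$, use the integrality of $\operatorname{tr}(L)=2\cos\theta$ to force the order into $\{2,3,4,6\}$, and exploit the invertibility of $L-I$ to produce a fixed point. The paper reaches the fixed point by a slightly different computation---it shows the translation vector $b$ already lies in the lattice, so the origin of $\T$ is fixed---but this is the same idea.

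One small correction: your claim that each nontrivial $\bar A^{k}$ has a \emph{unique} fixed point on $\T$ is false (for example $-\mathrm{id}$ on $\R^2/\Z^2$ has four fixed points; in general the count is $|\det(L^{k}-I)|$). This is harmless, since what you actually need and use is only that $\bar p_0$, being fixed by $\bar A$, is automatically fixed by every power of $\bar A$ and hence by all of $G$.
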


Here by $G$ acting on $\T$ by rotation around a fixed point, we mean that if we identify the fixed point with the origin in $\R^2$, and $\T$ with the fundamental domain in $\R^2$, then $G$ acts as a rotation on the Euclidean space $\R^2$.

\begin{proof}
By Lemma \ref{integerlattice}, we may assume that $\T=\R^2/\Z^2$.
Let $g$ be a generator of $G$ with order $n$. The order $n$ is greater than 1 since
\[\T/G=\S^2.\] The element $g$ is an automorphism of the torus $\T$; so $g$ is induced by an affine map $A_g$ on $\R^2$ of the form, 
\[A_g\left(\begin{array}{c}
    x \\
    y \\
  \end{array}\right)
=L_g\left(\begin{array}{c}
    x \\
    y \\
  \end{array}\right)+
  \left(\begin{array}{c}
    x_g \\
    y_g \\
  \end{array}\right) \mbox{ for }\left(\begin{array}{c}
    x \\
    y \\
  \end{array}\right)\in \R^2,\]
where $L_g\in \SL(2,\Z)$ and $x_g,y_g\in \R $.
Since $L_g\in \SL(2,\Z)$ and
\[L_g^n=I_2,\] where $I_2\in \SL(2,\Z)$ is the identity element,
the matrix $L_g$ is conjugate to a rotation of $\R^2$,
\[\left(\begin{array}{cc}\cos \frac{2\pi}{n}& \sin\frac{2\pi}{n}  \\-\sin\frac{2\pi}{n}& \cos\frac{2\pi}{n} \\\end{array}\right).\]
In addition, since the trace of $L_g$ is an integer, we must have
\[2\cos\frac{2\pi}{n}\in \Z.\] Hence, the order $n$ of the group $G$ can only be $2,3,4$ or $6$.

Since $A_g^n={\rm id}_{\R^2}$,
\begin{eqnarray} \label{Agid}
(L_g^n+L_g^{n-1}+\ldots+ L_g+I_2)
\left(\begin{array}{c}
    x_g \\
    y_g \\
\end{array}\right)
=\left(\begin{array}{c}
    a \\
    b \\
  \end{array}\right),
\end{eqnarray}
where $a,b\in \Z$.
Since $L_g$ is conjugate to a non-trivial rotation, $(L_g-I_2)$ is invertible. Multiplying equation \eqref{Agid} by $(L_g-I_2)$, we have
\begin{eqnarray*}
(L_g-I_2)(L_g^n+L_g^{n-1}+\ldots+ L_g+I_2)
\left(\begin{array}{c}
    x_g \\
    y_g \\
\end{array}\right)
=(L_g-I_2)\left(\begin{array}{c}
    a \\
    b \\
  \end{array}\right),
\end{eqnarray*}
so
\begin{eqnarray*}
(L_g-I_2)
\left(\begin{array}{c}
    x_g \\
    y_g \\
\end{array}\right)
=(L_g^{n+1}-I_2)
\left(\begin{array}{c}
    x_g \\
    y_g \\
\end{array}\right)
=(L_g-I_2)\left(\begin{array}{c}
    a \\
    b \\
  \end{array}\right).
\end{eqnarray*}
Hence, we have \begin{eqnarray*}
\left(\begin{array}{c}
    x_g \\
    y_g \\
\end{array}\right)
=\left(\begin{array}{c}
    a \\
    b \\
  \end{array}\right)\in \Z^2,
\end{eqnarray*}
and there exists a fixed point on $\T=\R^2/\Z^2$ under $g$.
Therefore, the group $G$ acts on $\T$ by rotation around a fixed point with order of $G$ either $2,3,4$ or $6$.
\end{proof}

\begin{lem} \label{lattesexpanding1}
Every Latt\`es-type map $f$ is a Thurston map.
\end{lem}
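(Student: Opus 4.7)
The plan is to verify the three defining properties of a Thurston map from Definition \ref{expandingmap}: that $f$ is a branched covering of $\S^2$, that $\deg(f)\ge 2$, and that $\post(f)$ is finite.

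First, I would check that $f$ is a branched covering. Since $\bar A$ is a torus endomorphism, it is a finite-sheeted covering map of $\T$ with no critical points. Thus the composition $f\circ \Theta=\Theta\circ \bar A$ is a branched covering $\T\to\S^2$, all of whose ramification comes from $\Theta$. Because $\Theta$ is itself a branched covering arising from a rigid cyclic action, a local-coordinate computation at each $x\in\T$ shows $f$ is a branched covering, with local degree $\deg_f(\Theta(x))=\deg_\Theta(\bar A(x))/\deg_\Theta(x)$. Taking degrees in $f\circ\Theta=\Theta\circ\bar A$ gives $\deg(f)=\deg(\bar A)=|\det L|$, where $L$ is the linear part of $\bar A$ from \eqref{affine}. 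Since both eigenvalues of $L$ have absolute value greater than $1$, $|\det L|\ge 2$.

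The main content is to show $\post(f)$ is finite. Let $B\subseteq\T$ be the branch set of $\Theta$, i.e.\ the set of points with non-trivial stabilizer in $G$. Since $G$ is a finite group acting on the compact surface $\T$, the set $B$ is finite, and so is $V:=\Theta(B)\subseteq\S^2$. By the local degree formula above, any critical point of $f$ has the form $\Theta(x)$ with $\bar A(x)\in B$, so its image $f(\Theta(x))=\Theta(\bar A(x))$ lies in $V$. Hence every critical value of $f$ is contained in $V$, and it suffices to prove the forward-invariance $f(V)\subseteq V$, for then $\post(f)\subseteq V$ is finite.

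I expect the forward-invariance to be the main, though not deep, obstacle. My strategy is to show that the semi-conjugacy $f\circ\Theta=\Theta\circ\bar A$ induces a $G$-equivariance of $\bar A$: for each $g\in G$ and $t\in\T$, the points $\bar A(g\cdot t)$ and $\bar A(t)$ lie in the same $G$-orbit by commutativity of the diagram, so $\bar A(g\cdot t)=\psi(g,t)\cdot\bar A(t)$ for some $\psi(g,t)\in G$. A continuity argument, using that $G$ is finite and acts freely outside the finite set $B$ and that the complement of $B\cup\bar A^{-1}(B)$ is connected, shows $\psi(g,t)$ is independent of $t$, giving a map $\phi\:G\to G$ with $\bar A(g\cdot t)=\phi(g)\cdot\bar A(t)$. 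A direct check shows $\phi$ is a group homomorphism, and $\phi$ is injective: if $\phi(g)=e$, then $\bar A(g\cdot t)=\bar A(t)$ for all $t$, which, since $\bar A$ is a covering map and $g$ acts by a rotation about a fixed point (Lemma \ref{group}), forces $g=e$. Since $G$ is finite, $\phi$ is an automorphism. Now if $t\in B$ is fixed by some $g\in G\setminus\{e\}$, then $\bar A(t)=\phi(g)\cdot\bar A(t)$ with $\phi(g)\ne e$, so $\bar A(t)\in B$. Hence $\bar A(B)\subseteq B$, giving $f(V)=\Theta(\bar A(B))\subseteq \Theta(B)=V$, and therefore $\post(f)\subseteq V$ is finite.
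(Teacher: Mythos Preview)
Your argument is correct, but it takes a more elaborate route than the paper's for the forward-invariance step. The paper obtains $f(V_\Theta)\subseteq V_\Theta$ directly from the local degree formula: if $q=\Theta(t')$ with $\deg_\Theta(t')>1$, then setting $t=\bar A(t')$ gives $\Theta(t)=f(q)$ and $\deg_\Theta(t)=\deg_f(q)\deg_\Theta(t')>1$, so $f(q)\in V_\Theta$. This is considerably shorter than your $G$-equivariance argument, which requires the continuity/connectedness step to show $\psi(g,t)$ is independent of $t$ and the injectivity check for $\phi$. On the other hand, your route yields the auxiliary fact that $\bar A$ intertwines the $G$-action via an automorphism $\phi$ of $G$, which is of independent interest even though it is not needed here. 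Note also that the paper establishes the \emph{equality} $\post(f)=V_\Theta$ (the reverse inclusion via a backward-orbit argument producing infinitely many critical points of $\Theta$); you only prove $\post(f)\subseteq V_\Theta$, which suffices for this lemma, but the equality is invoked later in the proof of Lemma \ref{noofpost}.
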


\begin{proof}
By Lemma \ref{integerlattice}, we know that $f$ is a Latt\`es-type map over the lattice $\Z^2$. Let $\T=\R^2/\Z^2$.  There exists a torus endomorphism
\[\bar A\:\T\ra \T\]
and  a branched covering map $\Theta\:\T\ra \S^2$ induced by a rigid action of a finite cyclic group $G$ on $\T$, such that $f\circ\Theta=\Theta \circ \bar A$. Let $A\:\R^2\ra \R^2$ be an affine map inducing $\bar A$.

The map $\Theta \circ \bar A$ is a branched covering map since locally $\bar A$ is a homeomorphism and $\Theta$ is a branched covering map.
Since $f\circ \Theta= \Theta \circ \bar A$ and $\bar A$ has local degree $1$ on every point,
we have
\[\deg_{f}(\Theta(z))\deg_{\Theta}(z)=\deg_{\Theta}(\bar A(z)),\]
and $f$ can be locally written as $w\mapsto w^{d}$, where $w=\Theta(z)$ and
\[d=\deg_{f}(w)=\deg_{\Theta}(\bar A(z))/\deg_{\Theta}(z).\]
Hence, a Latt\`es-type map $f$ is a branched covering map.


Let $V_f$ and $V_{\Theta}$ be the sets of critical values of $f$ and $\Theta$, respectively. We claim that $\post(f)= V_{\Theta}$ and these sets have finite cardinality. The claim is proved similarly to Lemma 3.4 in \cite{MilLattes}. We give the details of the argument for the convenience of the reader. Since $\bar A$ is a local homeomorphism and $\bar A$ and $\Theta$ are both surjective, a point $p\in \S^2$ is a critical value of $\Theta$ if and only if either $p$ is a critical value of $f$, or $p$ has a preimage in $f^{-1}(p)$ that is a critical value of $\Theta$. So $V_{\Theta}=V_f\cup f(V_{\Theta})$. Hence $f(V_f)\subseteq f(V_{\Theta})\subseteq V_{\Theta}$ and inductively, we have $\post(f)\subseteq V_{\Theta}$. The set $V_{\Theta}$ is finite due to the compactness of $\S^2$, and hence the set of critical points of $\Theta$ is also finite.

In order to show that $V_{\Theta}$ is a subset of $\post(f)$, we argue by contradiction. Then there exists a critical point $t_0\in \T$ of $\Theta$ such that $\Theta(t_0)\not\in\post(f)$. There exists $t_1\not=t_0$ in the preimage of $t_0$ under $\bar A$, and there exists $t_2\not=t_1,t$ in the preimage of $t_1$ under $\bar A$. Continuing in this manner, we get a sequence $\{t_i\}$ and the cardinality of $\{t_i\}$ is not finite. For all $i\geq1$, we have
\begin{equation}\label{degreeformula}
    \deg_{f}(\Theta(t_i))\deg_{\Theta}(t_i)=\deg_{\Theta}(\bar A(t_i)) =\deg_{\Theta}(t_{i-1})
\end{equation}
On the other hand, since $\Theta(t_0)\not\in\post(f)$, every element of $f^{-i}(\Theta(t_0))$ is a non-critical point for $f$ and has degree $1$. In particular, $\deg_{f}(\Theta(t_i))=1$ for all $i\geq 0$, and equation \eqref{degreeformula} implies that
\[\deg_{\Theta}(t_i) =\deg_{\Theta}(t_{i-1})=\ldots=\deg_{\Theta}(t_{0})>1.\]
Hence, each $t_i$ is a critical point of $\Theta$. This is a contradiction to the finiteness of the critical set of $\Theta$.

We claim that $\deg(f)=\det(A)$. Since $f\circ \Theta=\Theta\circ \bar A$ and $\deg(\Theta)<\infty$, we have \[\deg(f)=\deg(\bar A).\]
The map $\bar A$ carries a small region of area $\epsilon$ to a region of area $\det(\bar A)\epsilon$, so
\[\deg(\bar A)=\det (\bar A).\]
The claim follows. Since $\deg(f)=\det(A)>1$, the Latt\`es map $f$ is a Thurston map.
\end{proof}



For $a,b\in \N\cup\{\infty\}$, we use the convention that $\infty$ is a multiple of any positive integer or itself. If $a$ is a multiple of $b$, we write $b\,|\,a$. We also use the notation gcd$\{a,b\}$ as the greatest common divisor for $a,b\in \N\cup\{\infty\}$ (defined in the obvious way). Recall that in a set $X$ with partial order $\leq$, an element $x\in X$ is called a \emph{minimal element} if for all $y\in X$ we have that $y\leq x$ implies that $y=x$; an element $x\in X$ is called the \emph{minimum} if for all $y\in X$ we have that $x\leq y$. It is easy to see that if the minimum exists, then it is unique.

\begin{lem} \label{smallfuntion}
For any Thurston map $f$, there exists a function $\nu_f$ that is the minimum among functions $\nu\: \S^2\ra \N\cup \{\infty\}$ such that
\begin{equation}\label{divisor}
    \nu(p)\deg_f(p)\big|\nu(f(p))
\end{equation}
for all $p\in \S^2$.
\end{lem}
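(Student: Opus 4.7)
The plan is to construct $\nu_f$ as the pointwise greatest common divisor over all admissible functions. The class of $\nu\colon\S^2\to\N\cup\{\infty\}$ satisfying \eqref{divisor} is nonempty since $\nu\equiv\infty$ trivially works, so I would define
\[
\nu_f(p):=\gcd\{\nu(p):\nu\text{ satisfies \eqref{divisor}}\},
\]
where the gcd is interpreted in the divisibility order on $\N\cup\{\infty\}$ (with the convention that $\infty$ is a multiple of every element). This candidate is automatically minimal in that order: for any valid $\nu$ and any $p$, one has $\nu_f(p)\mid\nu(p)$ straight from the definition, i.e., $\nu_f\leq\nu$ in the pointwise divisibility order on functions, and uniqueness of the minimum (once shown to exist) is automatic as noted in the paragraph preceding the lemma.

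The only real step is to verify that $\nu_f$ itself satisfies \eqref{divisor}, i.e., that the admissible class is closed under pointwise gcd. Fix $p\in\S^2$. For any valid $\nu$, the relation $\nu_f(p)\mid\nu(p)$ gives
\[
\nu_f(p)\deg_f(p)\mid\nu(p)\deg_f(p)\mid\nu(f(p)).
\]
Since this holds for every valid $\nu$, the quantity $\nu_f(p)\deg_f(p)$ is a common divisor of the set $\{\nu(f(p)):\nu\text{ valid}\}$, and therefore divides its gcd, which by definition is $\nu_f(f(p))$. This is essentially a one-line verification and is the only place where the divisibility hypothesis is actually used.

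I do not anticipate any genuine obstacle: the statement is a soft existence result that reduces to the single observation that the admissible class is closed under pointwise gcd. As an alternative yielding a more concrete formula, one could instead set $\nu_f(p):=\mathrm{lcm}\{\deg_{f^n}(q):n\geq 0,\,q\in f^{-n}(p)\}$ and verify \eqref{divisor} directly using the chain rule $\deg_{f^{n+1}}(q)=\deg_f(f^n(q))\deg_{f^n}(q)$ together with the fact that any valid $\nu$ must satisfy $\deg_{f^n}(q)\mid\nu(p)$ whenever $f^n(q)=p$; the two definitions agree, and the lcm description is the one usually invoked when reading off the orbifold $O_f$.
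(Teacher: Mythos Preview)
Your argument is correct and rests on the same key observation as the paper's proof, namely that the class of admissible $\nu$ is closed under pointwise $\gcd$. The paper, however, organizes this differently: it first invokes the finiteness of $\post(f)$ to argue that a \emph{minimal} admissible function exists (since only finitely many values need to be assigned), then uses closure under pairwise $\gcd$ to show uniqueness of the minimal element, and finally upgrades ``minimal'' to ``minimum'' by taking $\gcd(\nu_f,\nu)$ for arbitrary admissible $\nu$. Your version is more direct: by defining $\nu_f$ as the $\gcd$ over the entire admissible family at once, you get minimality for free and need only check admissibility, which you do correctly. In particular, your route does not use the finiteness of $\post(f)$ at all, whereas the paper leans on it for the existence step. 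Your alternative $\mathrm{lcm}$ formula is also correct and is indeed the standard explicit description of the ramification function of $O_f$.
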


\begin{proof}
We have a natural partial order for functions satisfying \eqref{divisor}. If $\nu_1$ and $\nu_2$ are such functions, then we set 
\[\nu_1\leq \nu_2\mbox{ iff }\nu_1(p)| \nu_2(p)\] for all $p\in \S^2$.
In order to show the existence of such a minimal function satisfying \eqref{divisor}, we set
$\nu(p)=1$ if $p$ is not a postcritical point of $f$. We only need to assign a value to the finitely many postcritical points of $f$. If we let $\nu(p)=\infty$ when $p\in \post(f)$, this shows the existence of a such function $\nu$.
The existence of a minimal function follows from assigning values over a fixed finite set.

To show uniqueness of a minimal function, suppose that $\nu_1$ and $\nu_2$ are both minimal functions satisfying condition \eqref{divisor}. Let
\[\nu_3(p):={\rm gcd}\{\nu_1(p),\nu_2(p)\}.\]
We claim that $\nu_3$ satisfies condition \eqref{divisor}. Indeed,
\[{\rm gcd}\{\nu_1(f(p)),\nu_2(f(p))\}=\nu_3(f(p))\] is a multiple of
\begin{eqnarray*}
  {\rm gcd}\{\nu_1(p)\deg_f(p),\nu_2(p)\deg_f(p)\} &=& {\rm gcd}\{\nu_1(p),\nu_2(p)\}\deg_f(p) \\
   &=& \nu_3(p)\deg_f(p).
\end{eqnarray*}
Hence, we have $\nu_3\leq \nu_1,\nu_2$. Since $\nu_1$ and $\nu_2$ are both minimal functions, we conclude that
\[\nu_1=\nu_2=\nu_3.\]

We claim that this unique minimal function $\nu_f$ is the minimum with respect to the order $\leq$. Indeed, let $\nu$ be a function satisfying \eqref{divisor}. Then we have that
\[\nu_1={\rm gcd}(\nu_f,\nu)\leq \nu_f\] also satisfies \eqref{divisor}. Hence, $\nu_1=\nu_f$ by the minimality of $\nu_f$. Therefore, $\nu_f={\rm gcd}(\nu_f,\nu)\leq \nu$.
\end{proof}

Thurston associated an \emph{orbifold} $O_f=(\S^2,\nu_f)$ to a Thurston map $f$ through the smallest $\nu_f$ function in Lemma \ref{smallfuntion} (see \cite{DHThurston}). More precisely, for each $p\in \S^2$ with $\nu_f(p)\not=1$, the point $p$ is a \emph{cone point} with \emph{cone angle} $2\pi/v_f(p)$. For $\post(f)=\{p_1,\ldots,p_m\}$, use $(\nu(p_1),\ldots,\nu(p_m))$ to denote the \emph{type} of $O_f$. We will not elaborate on the geometric significance of the orbifold here, but instead refer the reader to Chapter 13 in \cite{ThuGeometry}.

\begin{de}
For any Thurston map $f$ and the smallest function $\nu_f\:\S^2\ra \N\cup\{\infty\}$ associated to $f$  satisfying condition \eqref{divisor}, let
\[\chi(O_f)=2-\sum_{p\in \post(f)}\left(1-\frac{1}{\nu_f(p)}\right).\]
\begin{itemize}
  \item If $\chi(O_f)=0$, we say that the orbifold $O_f$ is \emph{parabolic};
  \item If $\chi(O_f)<0$, we say that the orbifold $O_f$ is \emph{hyperbolic}.
\end{itemize}
We call $\chi(O_f)$ the \emph{Euler characteristic} of the orbifold $O_f$ associated to $f$.
\end{de}

\begin{re}
By Proposition 9.1 (i) in \cite{DHThurston}, $\chi(O_f)\leq 0$.
\end{re}

\excise{
\begin{de}
For any Thurston map $f$ and the smallest function $v_f\:\S^2\ra \N\cup\{\infty\}$ associated to $f$ satisfying condition \eqref{divisor}, let
\[\chi(f)=2-\sum_{p\in \post(f)}\left(1-\frac{1}{\nu(p)}\right).\]
\begin{itemize}
  \item If $\chi(f)>0$, we say that the map $f$ has elliptic type;
  \item If $\chi(f)=0$, we say that the map $f$ has parabolic type;
  \item If $\chi(f)<0$, we say that the map $f$ has hyperbolic type.
\end{itemize}
\end{de}
}

\begin{lem} \label{noofpost}
For a Latt\`es-type map $f$, the orbifold $O_f$ is parabolic. In particular, the number of cone points must be either three or four. Hence, the cardinality of the postcritical set of $f$ is either three or four.
\end{lem}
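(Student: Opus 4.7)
The plan is to construct an explicit function $\nu\:\S^2\ra \N$ from the group action underlying $\Theta$, verify that it satisfies the divisibility condition \eqref{divisor}, and then combine the minimality of $\nu_f$ (Lemma \ref{smallfuntion}) with the remark that $\chi(O_f)\leq 0$ to conclude $\chi(O_f)=0$.

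First I would apply Lemma \ref{integerlattice} to reduce to $\Lambda=\Z^2$, and invoke Lemma \ref{group} so that $G$ is cyclic of order $n\in\{2,3,4,6\}$ acting by rotation around a fixed point. For $t\in \T$ let $G_t\leq G$ denote its stabilizer. Since $G$ is abelian, $|G_t|$ depends only on the $G$-orbit of $t$, so I define $\nu\:\S^2\ra \N$ by $\nu(p):=|G_t|$ for any $t\in \Theta^{-1}(p)$; equivalently, $\nu(p)=\deg_\Theta(t)$. In particular $\{p:\nu(p)>1\}=V_\Theta$, which coincides with $\post(f)$ by the argument in the proof of Lemma \ref{lattesexpanding1}.

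Next I would verify \eqref{divisor}. Because $\bar A$ is a local homeomorphism, the relation $f\circ \Theta=\Theta\circ \bar A$ and multiplicativity of local degrees yield
\[\nu(f(p))=\deg_\Theta(\bar A(t))=\deg_f(\Theta(t))\deg_\Theta(t)=\deg_f(p)\,\nu(p)\]
for any $t\in \Theta^{-1}(p)$. Hence $\nu$ satisfies \eqref{divisor} with equality, and by the minimality of $\nu_f$ one has $\nu_f(p)\,|\,\nu(p)$, so in particular $\nu_f(p)\leq \nu(p)$ for every $p$. Consequently,
\[\chi(O_f)=2-\sum_{p\in\post(f)}\bigl(1-\tfrac{1}{\nu_f(p)}\bigr)\geq 2-\sum_{p\in\post(f)}\bigl(1-\tfrac{1}{\nu(p)}\bigr).\]
A brief case-by-case analysis of the fixed-point structure on $\T=\R^2/\Z^2$ then pins down the signature of $\nu$: rotations of order $2,3,4,6$ produce signatures $(2,2,2,2)$, $(3,3,3)$, $(2,4,4)$, and $(2,3,6)$ respectively. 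In each case a direct computation gives $\sum_{p\in\post(f)}(1-\tfrac{1}{\nu(p)})=2$, so the displayed inequality becomes $\chi(O_f)\geq 0$; combined with the remark $\chi(O_f)\leq 0$ this forces $\chi(O_f)=0$, and $O_f$ is parabolic. The count of cone points in each of the four cases is either three or four, and since $\post(f)=\{p:\nu(p)>1\}$ this also gives $\#\post(f)\in\{3,4\}$.

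The main obstacle is the orbit bookkeeping in the case analysis: for each $n$ one must enumerate the fixed points of every non-trivial power of a generator of $G$, group them into $G$-orbits, and extract the corresponding stabilizer orders on $\S^2$. Once this is done, the remainder of the argument is a clean use of the multiplicativity of local degrees, the minimality of $\nu_f$, and the upper bound on $\chi(O_f)$ supplied by the remark.
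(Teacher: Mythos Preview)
Your argument is correct, and the function $\nu$ you build is exactly the one the paper constructs. The route diverges after that point. The paper proves more: it shows that $\nu$ is actually equal to $\nu_f$ (not merely $\nu_f\,|\,\nu$), by checking that any $\nu'\leq\nu$ satisfying \eqref{divisor} must agree with $\nu$ on every point, first outside $\post(f)$ and then on $\post(f)$ by pulling back along $f$. With $\nu_f=\nu$ in hand the paper invokes Proposition~9.1 of \cite{DHThurston}: the exact relation $\nu_f(f(p))=\deg_f(p)\,\nu_f(p)$ means $f\:O_f\to O_f$ is an orbifold covering, which forces $\chi(O_f)=0$; the signatures $(2,2,2,2)$, $(3,3,3)$, $(2,4,4)$, $(2,3,6)$ are then read off from the classification of parabolic orbifolds in \cite{DHThurston}. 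Your approach trades the appeal to \cite{DHThurston} for a direct crystallographic case analysis on $\T$ together with the sandwich $0\leq\chi(O_f)\leq 0$; this is more self-contained but does require carrying out the fixed-point bookkeeping you flag as the main obstacle (and note that for $n=3,6$ the ``rotation'' on $\R^2/\Z^2$ is only conjugate to a Euclidean rotation, so the enumeration is with respect to the appropriate $\SL(2,\Z)$ element). Either way one lands on the same four signatures, and since $\post(f)=V_\Theta=\{p:\nu(p)>1\}$ the cardinality statement follows.
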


\begin{proof}
There exist a torus endomorphism $\bar A\:\T\ra \T$ and a branched covering map $\Theta\: \T\ra \S^2$ induced by a group action on $\T$ as a rotation around some base point in $\T$, such that $f\circ \Theta =\Theta \circ \bar A$. For any points $t_1\in \bar A^{-1}(t_0)$, $t_i\in \T$, and $p_1\in f^{-1}(p_0)$, $p_i\in \S^2$ such that $\Theta(t_i)=p_i$, $i=0,1$, we have that
\[\deg_{\Theta}(t_0)=\deg_f(p_1) \deg_{\Theta}(t_1). \]
Define $\nu(\Theta(t))=\deg_{\Theta}(t)$, and $\nu(p)=1$ if $p\not\in\post(f)$. Since $\Theta$ is induced by a group action, different preimages of $\Theta(t)$ under $\Theta$ all have the same degree. Explicitly, for any $t, t'\in \T$ such that $\Theta(t)=\Theta(t')$, there is a torus automorphism $g$ such that $g(t)=t'$ and $\Theta(g(x))=\Theta(x)$ for all $x\in \T$, so
\[\deg_{\Theta}(t)=\deg_{\Theta}(g(t))\deg_g(t)=\deg_{\Theta}(t').\]
In the proof of Lemma \ref{lattesexpanding1}, we showed that $\post(f)$ is equal to the set of critical values of $\Theta$, so $\nu$ is well-defined on $\S^2$. In addition,
\[\nu(p_0)=\nu(f(p_1))=\deg_{\Theta}(t_0)=\deg_f(p_1) \deg_{\Theta}(t_1)=\deg_f(p_1) \nu(p_1).\] So $\nu$ is a function satisfying condition \eqref{divisor}.

We claim that $\nu$ is the smallest function satisfying condition \eqref{divisor}. Indeed, suppose that $\nu'$ satisfying condition \eqref{divisor} is smaller than $\nu$. If $p_1\not\in \post(f)$, then
\[\nu'(p_0)=\nu'(f(p_1))=\deg_f(p_1) =\nu(f(p_1))=\nu(p_0).\]
If $p_1\in \post(f)$, then there exists $n>0$ and $p\in f^{-n}(p_1)$, such that $p\not\in \post(f)$. By induction on $n$, we get that $\nu'(p_1)=\nu(p_1)$ and hence
\[\nu'(p_0)=\nu'(f(p_1))=\deg_f(p_1) \nu'(p_1)=\deg_f(p_1) \nu(p_1)=\nu(f(p_1))=\nu(p_0).\]
Thus, $\nu'=\nu$ and our claim is proved.

By the proof of Proposition 9.1 (i) in \cite{DHThurston}, $\nu(f(p_1))=\deg_f(p_1) \nu(p_1)$ implies that $f$ is a covering map of orbifolds $f\: O_f \ra O_f$, and again by Proposition 9.1 (ii), $\chi(O_f)=0$ and $O_f$ is parabolic.  All the parabolic orbifolds are classified in Section 9 in \cite{DHThurston}, and they have type $(2,2,2,2), (3,3,3), (2,3,6)$ and $(2,4,4)$, and all have three or four cone points.
\end{proof}

\begin{pro}\label{lattesexpanding}
Every Latt\`es-type map $f$ is an expanding Thurston map.
\end{pro}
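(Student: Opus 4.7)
The plan is as follows. Since Lemma \ref{lattesexpanding1} already shows that $f$ is a Thurston map, only the expansion condition in Definition \ref{expandingmap} remains to be verified: one must exhibit a Jordan curve $\mathcal{C} \supseteq \post(f)$ with $\mathrm{mesh}(f, n, \mathcal{C}) \to 0$. Fix any such Jordan curve $\mathcal{C}$ and set $\widetilde{\mathcal{C}} := \Theta^{-1}(\mathcal{C})$. Because the critical values of $\Theta$ coincide with $\post(f) \subseteq \mathcal{C}$ (as established in the proof of Lemma \ref{lattesexpanding1}), all critical points of $\Theta$ lie in $\widetilde{\mathcal{C}}$, and the restriction $\Theta\colon \T \setminus \widetilde{\mathcal{C}} \to \S^2 \setminus \mathcal{C}$ is an ordinary (unbranched) covering. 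The two components of $\S^2 \setminus \mathcal{C}$ are topological open disks by the Sch\"onflies theorem, and any connected covering of a simply connected space is a homeomorphism, so each connected component $U$ of $\T \setminus \widetilde{\mathcal{C}}$ is itself simply connected. Moreover, there are only finitely many such components.

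Using the relation $f^n \circ \Theta = \Theta \circ \bar A^n$ from the commutative diagram, one obtains
\[
    \Theta^{-1}\bigl(f^{-n}(\S^2 \setminus \mathcal{C})\bigr) = \bar A^{-n}(\T \setminus \widetilde{\mathcal{C}}),
\]
so the connected components of $f^{-n}(\S^2 \setminus \mathcal{C})$ are precisely the $\Theta$-images of the connected components of $\bar A^{-n}(\T \setminus \widetilde{\mathcal{C}})$. Since $\bar A$ is a local diffeomorphism, $\bar A^n$ is an unbranched covering of $\T$, and each connected component $V$ of $\bar A^{-n}(U)$ maps homeomorphically onto the simply connected set $U$ under $\bar A^n$. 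Both $V$ and $U$ are therefore simply connected open subsets of $\T$, so they embed through the universal cover $\pi\colon \R^2 \to \T$ as open sets $\tilde V, \tilde U \subseteq \R^2$, each of finite Euclidean diameter since their closures in $\T$ are compact.

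Standard uniqueness of lifts, applied to the affine lift $A^n$ of $\bar A^n$, then yields compatible choices of these lifts for which $A^n(\tilde V) = \tilde U + \lambda$ for some $\lambda \in \Lambda$, and hence $\tilde V = A^{-n}(\tilde U + \lambda)$, giving
\[
    \diam_{\T}(V) \leq \diam_{\R^2}(\tilde V) \leq \|A^{-n}\|_{\mathrm{op}} \cdot \diam_{\R^2}(\tilde U).
\]
Since all eigenvalues of $A$ have absolute value strictly greater than $1$, Gelfand's spectral radius formula gives $\|A^{-n}\|_{\mathrm{op}} \to 0$ as $n \to \infty$; combined with the finiteness of the number of components $U$, this bound is uniform in $V$, so the $\T$-diameters of all components of $\bar A^{-n}(\T \setminus \widetilde{\mathcal{C}})$ tend uniformly to $0$. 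Uniform continuity of $\Theta$ on the compact torus $\T$ then transfers this conclusion to $\S^2$, giving $\mathrm{mesh}(f, n, \mathcal{C}) \to 0$ and hence expansion. The main obstacle is the topological step establishing simple connectedness of the components of $\T \setminus \widetilde{\mathcal{C}}$; once that is in hand, the rest is pure spectral-radius contraction of $A^{-n}$ on the universal cover.
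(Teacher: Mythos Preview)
Your approach parallels the paper's: both reduce the expansion condition to the contraction of $A^{-n}$ and then transfer this from the cover down to $\S^2$. The paper equips $\S^2$ with the pushed-forward flat orbifold metric and compares diameters via the finite quotient factor $2|G|$; you instead lift all the way to $\R^2$ and invoke uniform continuity of $\Theta$ at the end. Your use of Gelfand's formula for $\|A^{-n}\|_{\mathrm{op}}\to 0$ is in fact more careful than the paper's bare assertion that each component of $\bar A^{-1}(T')$ has diameter $|\lambda_1|^{-1}\diam(T')$.

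There is one genuine gap. Your justification that the lifts $\tilde U\subseteq\R^2$ have finite Euclidean diameter is ``since their closures in $\T$ are compact,'' but \emph{every} subset of the compact torus has compact closure, so this says nothing. It does not by itself rule out a long thin simply connected strip winding through $\T$ whose lift to $\R^2$ has very large diameter, and you need a uniform bound on $\diam_{\R^2}(\tilde U)$ over the finitely many components for the estimate $\diam_{\R^2}(\tilde V)\le\|A^{-n}\|_{\mathrm{op}}\diam_{\R^2}(\tilde U)$ to be useful. The paper sidesteps this entirely by never leaving $\T$: it measures everything in the intrinsic flat metric on $\T$, where $\diam_\T(T')\le\diam(\T)<\infty$ trivially, and passes to $\S^2$ via the explicit comparison $\diam(T)\le 2|G|\,\diam(T')$.

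The simplest patch for your version is to recall that Definition~\ref{expandingmap} requires only \emph{some} Jordan curve $\mathcal C\supseteq\post(f)$ with $\mathrm{mesh}(f,n,\mathcal C)\to 0$. So rather than ``fix any such Jordan curve,'' choose a specific one --- for instance the image under $\Theta\circ\pi$ of the boundary of a fundamental polygon for the orbifold group, as in the setup preceding Proposition~\ref{dnrelation} --- for which the components $\tilde U$ are visibly bounded polygons in $\R^2$. With that choice the rest of your argument goes through unchanged.
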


\begin{proof}
By Lemma \ref{lattesexpanding1}, we know that $f$ is a Thurston map.
Given a Latt\'es-type map $f$, there exists a torus endomorphism
\[\bar A\:\T\ra \T\]
and  a branched covering map $\Theta\:\T\ra \S^2$ induced by a rigid action of a finite cyclic group $G$ on $\T$, such that $f\circ\Theta=\Theta \circ \bar A$.

Let $\mathcal C\subset \S^2$ be a Jordan curve containing $\post(f)$.
The torus $\T$ carries a flat metric induced by the Euclidean metric $\R^2$, and the map $\Theta$ induces a flat orbifold metric on $\T/G\cong \S^2$.
Observe that the interior $T$ of a $0$-tile on $\S^2$ under the cell decomposition of $(f,{\mathcal C})$ does not intersect with $\post(f)=V_{\Theta}$, so $\Theta$ restricted to one of the connected components $T'$ of $\Theta^{-1}(T)$ is a homeomorphism. In addition, since $\S^2$ is obtained by a finite quotient of $\T$ by $G$,
\[\diam(T')\leq \diam(T)\leq 2|G|\diam(T').\]
Each connected component of $\bar A^{-1}(T')$ has diameter $\lambda\diam(T')$, where
\[\lambda=|\lambda_1|^{-1}<1\]
is the inverse of the smaller absolute value of the eigenvalues of $\bar A$. Hence, each connected component of the preimage of $T$ under $f^n$ has diameter bounded by $2|G|\lambda^n\diam(T)$, where $\lambda<1$. Therefore, we have
\[{\rm mesh}(f,n,{\mathcal C})\leq 2|G| \lambda^n {\rm mesh}(f,1,{\mathcal C}) \ra 0\]
as $n\ra \infty$, and the map $f$ is expanding.
\end{proof}

\section{Combinatorial Expansion Factor and $D_n$} \label{necessity}
\noindent
In this section, we first review the definitions and some properties of the quantity $D_n$ and the related combinatorial expansion factor of an expanding Thurston map. Then we prove a relation between $D_n$ and the operator norm of the associated torus map for Latt\`es-type maps, which gives the necessity of the third condition in Theorem \ref{main}.

Let $f\:\S^2\ra \S^2$ be an expanding Thurston map and let $\mathcal{C}$ be a Jordan curve containing $\post(f)$. First, we review some definitions and propositions from \cite{BMExpanding}.
\begin{de}\label{joinoppositesides}
A set $K\subseteq \S^2$ \emph{joins opposite sides} of $\mathcal C$ if $\#$$\post(f)\geq 4$
and $K$ meets two disjoint $0$-edges, or if $\#$$\post(f) = 3$ and $K$ meets all
three $0$-edges.
\end{de}

Let $D_n=D_n(f,\mathcal C)$ be the minimum number of $n$-tiles needed to join opposite sides of a Jordan curve $\mathcal{C}$. More precisely,
\begin{eqnarray}\label{defdn}
\hspace{1cm}
D_n :=\min\{N\in \N: \mbox{ there exist $n$-tiles } X_1, . . . ,X_N \mbox{ such that }  \\
\bigcup_{j=1}^N X_j\mbox{ is connected and joins opposite sides of }\mathcal C\}. \nonumber
\end{eqnarray}
We will often abuse notation and write $D_n$ rather than $D_n(f,\mathcal C)$.

\begin{eg} \label{latteseg2}
Recall the Latt\`es-type map  $f$ in Example \ref{latteseg} which is induced by the map $z\mapsto 2z$. The postcritical set $\post (f)$ consists of the four common corner points of the two big squares. If we let $\mathcal C$ be the common boundary of the two big squares, then $\mathcal C$ contains $\post (f)$ and
\[D_n(f,\mathcal C)=2^n\]
for all $n\geq 0$.
Similarly, consider the Latt\`es-type map $g$ in Example \ref{nonlatteseg}. If we let $\mathcal C'$ be the boundary of the common big squares, then $\mathcal C'$ contains $\post (g)$ which consists of the four corner points, and
\[D_n(g,\mathcal C')=2^n\]
for all $n\geq 0$.
\end{eg}


We are going to need Lemma 7.9 in \cite{BMExpanding} in Section \ref{sufficiency}. It states that:
\begin{lem} \label{disjointcell}
Let $n\in \N_0$, and let $K\subset \S^2$ be a connected set. If there exist two disjoint $n$-cells $\sigma$ and $\tau$ with $K\cap \sigma\not=\emptyset$ and $K\cap \tau\not=\emptyset$, then $f^n(K)$ joins opposite sides of $\mathcal C$.
\end{lem}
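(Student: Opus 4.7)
The plan is to argue by contradiction. Suppose $\sigma \cap \tau = \emptyset$ while $K$ is connected with $K \cap \sigma \neq \emptyset$ and $K \cap \tau \neq \emptyset$, yet $f^n(K)$ does not join opposite sides of $\mathcal{C}$.

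I would first establish a combinatorial characterization of failure to join opposite sides: a connected set $K' \subseteq \S^2$ fails to join opposite sides of $\mathcal{C}$ if and only if $K' \cap \mathcal{C}$ is contained in the closed union $\bar{e}_1 \cup \bar{e}_2$ of two $0$-edges sharing a common $0$-vertex $v^* \in \post(f)$. This follows from a short argument on the cyclic ordering of the $m = \#\post(f)$ $0$-edges along $\mathcal{C}$: pairwise non-disjointness of the $0$-edges met by $K'$ forces them to be consecutive, and consecutive $0$-edges share exactly one endpoint.

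With this characterization in hand, I would prove the lemma by induction on $n$. The base case $n = 0$ follows from a direct case analysis of the types of disjoint $0$-cells (two $0$-edges sharing no vertex; an $0$-edge and a $0$-vertex not lying on it; or two distinct $0$-vertices). In each case the $0$-edges of $\mathcal{C}$ touched by $K$ at the two cells already contain two disjoint $0$-edges, or all three $0$-edges when $\#\post(f) = 3$, so $K$ joins opposite sides.

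For the inductive step, apply $f$ and let $\sigma_1 := f(\sigma)$ and $\tau_1 := f(\tau)$; these are $(n-1)$-cells by Lemma~\ref{tilenumber}(1), both met by the connected set $f(K)$. If $\sigma_1 \cap \tau_1 = \emptyset$, the inductive hypothesis applied to $f(K)$ yields that $f^{n-1}(f(K)) = f^n(K)$ joins opposite sides, a contradiction. The main obstacle is the remaining case $\sigma_1 \cap \tau_1 \neq \emptyset$, where two disjoint $n$-cells have overlapping $f$-images because $f$ is branched. To handle it, I would pick $q \in \sigma_1 \cap \tau_1$ and use that $f|_\sigma$ and $f|_\tau$ are homeomorphisms (Lemma~\ref{tilenumber}(1)) to produce preimages $p_\sigma \in \sigma$ and $p_\tau \in \tau$ with $f(p_\sigma) = f(p_\tau) = q$. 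If $p_\sigma = p_\tau$, this already contradicts $\sigma \cap \tau = \emptyset$. Otherwise, invoking Lemma~\ref{flower} on the simple connectedness of the $n$-flowers at the $n$-vertex preimages in $f^{-1}(q)$, and tracking a connected sub-arc of $K$ from $p_\sigma$ to $p_\tau$, I expect to localize this sub-arc into a single closed $n$-flower and replace $\sigma$ or $\tau$ by a neighboring $n$-cell sharing a vertex with the other, reducing to the disjoint-image case already handled by the inductive hypothesis.
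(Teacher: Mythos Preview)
The paper does not prove this lemma; it is quoted verbatim from \cite[Lemma~7.9]{BMExpanding}. So there is no in-paper argument to compare against, and your proposal has to be judged on its own.

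Your base case and the combinatorial characterization of ``does not join opposite sides'' are fine (and in fact the characterization can be sharpened to: a set $K'$ fails to join opposite sides if and only if $K'\subset W^0(p)$ for some $0$-vertex $p$). The gap is in the inductive step, precisely where you flag it yourself. When $f(\sigma)\cap f(\tau)\neq\emptyset$ you write that you ``expect to localize this sub-arc into a single closed $n$-flower and replace $\sigma$ or $\tau$ by a neighboring $n$-cell\ldots reducing to the disjoint-image case.'' This is not an argument. First, $K$ is only assumed connected, not arcwise connected, so there need not be any sub-arc from $p_\sigma$ to $p_\tau$. Second, even granting a path, nothing forces it into a single $n$-flower: $K$ may be large compared with the $n$-scale. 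Third, there is no mechanism producing a replacement pair of disjoint $n$-cells whose $f$-images are disjoint; indeed $f(\sigma)=f(\tau)$ can occur (two $n$-tiles mapping to the same $(n-1)$-tile), and then every pair of subcells you might pick inside $\sigma$ and $\tau$ has the same problem. The induction does not close.

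The standard route avoids induction entirely and is short. Assume $f^n(K)$ does not join opposite sides; by the sharpened characterization $f^n(K)\subset W^0(p)$ for some $0$-vertex $p$. One then uses that $(f^n)^{-1}\bigl(W^0(p)\bigr)$ is the disjoint union of the open $n$-flowers $W^n(v)$ over $v\in (f^n)^{-1}(p)$. Since $K$ is connected it lies in a single $W^n(v)$. But any $n$-cell meeting $W^n(v)$ must contain $v$ (check this cell-by-cell from the definition of the flower), so both $\sigma$ and $\tau$ contain $v$, contradicting $\sigma\cap\tau=\emptyset$. If you want to salvage your write-up, replace the inductive step by this flower-preimage argument; your characterization lemma already does most of the work.
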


Lemma 7.10 in \cite{BMExpanding} states that:
\begin{lem} \label{disjointcell1}
For $n,k\in \N_0$, every set of $(n+k)$-tiles whose union is
connected and meets two disjoint $n$-cells contains at least $D_k$ elements.
\end{lem}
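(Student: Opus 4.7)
The plan is to invoke the preceding Lemma \ref{disjointcell} to push the whole configuration forward under $f^n$, turning our $(n+k)$-tiles into $k$-tiles, and then appeal directly to the definition of $D_k$.

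More precisely, let $\mathcal{F} = \{X_1,\dots,X_N\}$ be a set of $(n+k)$-tiles whose union $K=\bigcup_{i=1}^N X_i$ is connected and meets two disjoint $n$-cells $\sigma, \tau$. First I would apply Lemma \ref{disjointcell} to $K$: since $K$ is connected and intersects both $\sigma$ and $\tau$, its image $f^n(K)$ joins opposite sides of $\mathcal{C}$. Next I would rewrite this image as a union of tiles: by Lemma \ref{tilenumber}(1), for each $i$ the set $f^n(X_i)$ is a $k$-cell of the same type as $X_i$, hence a $k$-tile, and
\[ f^n(K) \;=\; \bigcup_{i=1}^{N} f^n(X_i). \]
Continuity of $f^n$ ensures that $f^n(K)$ is still connected.

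Thus I have produced a collection of at most $N$ distinct $k$-tiles whose union $f^n(K)$ is connected and joins opposite sides of $\mathcal{C}$. By the very definition \eqref{defdn} of $D_k$ as the minimum size of such a collection, this number of distinct $k$-tiles is at least $D_k$. Since the number of distinct $k$-tiles appearing among $\{f^n(X_i)\}_{i=1}^N$ is bounded above by $N$, we conclude $N \geq D_k$, which is the desired inequality.

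There is no real obstacle here: the two ingredients, Lemma \ref{disjointcell} and the functorial behavior of tiles under $f^n$ given by Lemma \ref{tilenumber}(1), do all the work, and the only mild subtlety to mention is that two distinct $(n+k)$-tiles $X_i, X_j$ could map to the same $k$-tile under $f^n$, but this only decreases the number of distinct $k$-tiles in the image, which still yields $N \geq (\text{number of distinct }k\text{-tiles}) \geq D_k$.
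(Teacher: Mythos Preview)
Your argument is correct and is exactly the standard proof: the paper itself does not prove this lemma but merely cites it as Lemma~7.10 in \cite{BMExpanding}, where the proof proceeds precisely as you outline---push forward by $f^n$ using Lemma~\ref{disjointcell}, observe via Lemma~\ref{tilenumber}(1) (with the roles of $n$ and $k$ swapped) that the images are $k$-tiles, and invoke the definition of $D_k$. Your remark about possible collisions among the images $f^n(X_i)$ is the only subtlety, and you handle it correctly.
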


Proposition 17.1 in \cite{BMExpanding} says that:
\begin{pro} \label{expansionfactor}
For an expanding Thurston map $f\: \S^2\ra \S^2$, and a Jordan curve $\mathcal C$ containing $\post(f)$,
the limit
\[\Lambda_0(f):=\lim_{n\ra\infty}D_n(f,\mathcal{C})^{1/n}\] exists and is independent of $\mathcal C$.
\end{pro}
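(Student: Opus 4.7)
The plan is to prove existence of $\Lambda_0(f)$ via Fekete's lemma applied to a submultiplicativity estimate for $D_n$, and then to deduce independence from the Jordan curve via the flower-to-tile comparison lemma (Lemma 7.12 of \cite{BMExpanding}).

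For existence, I would first establish an inequality of the form
\[D_{n+k}(f,\mathcal{C}) \;\leq\; K \cdot D_n(f,\mathcal{C}) \cdot D_k(f,\mathcal{C}),\]
with a constant $K=K(f,\mathcal{C})$ independent of $n$ and $k$. The construction: take a minimal chain $X_1,\dots,X_{D_n}$ of $n$-tiles whose union is connected and joins opposite sides of $\mathcal{C}$, labelled so that $X_i\cap X_{i+1}\neq\emptyset$. By Lemma \ref{tilenumber}(1), $f^n$ restricts to a homeomorphism of each $X_i$ onto a $0$-tile $T_i$, carrying the $(n+k)$-tiles inside $X_i$ bijectively onto the $k$-tiles inside $T_i$. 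Fixing contact points $p_i\in X_{i-1}\cap X_i$ and $q_i\in X_i\cap X_{i+1}$, the task of connecting $p_i$ to $q_i$ inside $X_i$ by $(n+k)$-tiles translates, under $f^n$, to connecting two points inside $T_i$ by $k$-tiles. A chain of $D_k$ many $k$-tiles joining opposite sides of $\mathcal{C}$, together with a uniformly bounded number of additional $k$-tiles contributed by $k$-flowers (Lemma \ref{flower}) that bridge across vertices, is enough to accomplish this with at most $K' D_k$ tiles, for a constant $K'$ depending only on $\#\post(f)$. Concatenating the local chains over $i=1,\dots,D_n$ yields a connected union of at most $K D_n D_k$ many $(n+k)$-tiles joining opposite sides of $\mathcal{C}$. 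Setting $a_n:=\log D_n$, the inequality gives $a_{n+k}\leq a_n+a_k+\log K$, which is enough for Fekete's lemma to guarantee existence of $\lim_n a_n/n$, and hence of $\lim_n D_n(f,\mathcal{C})^{1/n}$.

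For independence of $\mathcal{C}$, let $\mathcal{C}$ and $\mathcal{C}'$ both contain $\post(f)$. The flower-to-tile lemma (Lemma 7.12 of \cite{BMExpanding}) supplies a constant $M$ such that every $n$-tile for $(f,\mathcal{C})$ is covered by at most $M$ $n$-flowers for $(f,\mathcal{C}')$; combining Lemma \ref{flower} with the local-degree bound in Lemma \ref{noperiodic} shows that each such flower is a union of at most $N$ $n$-tiles for $\mathcal{C}'$. Hence a minimal chain realizing $D_n(f,\mathcal{C})$ is covered by at most $MN\cdot D_n(f,\mathcal{C})$ many $n$-tiles for $\mathcal{C}'$; the covering union is connected, and after enlarging by a uniformly bounded number of adjacent $n$-tiles for $\mathcal{C}'$ to ensure it still joins opposite sides of $\mathcal{C}'$, we obtain $D_n(f,\mathcal{C}')\leq C\cdot D_n(f,\mathcal{C})$ for some constant $C$ independent of $n$. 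The reverse inequality follows by symmetry, so the two limits coincide.

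The main obstacle is the submultiplicativity step, and in particular controlling the number of bridging $(n+k)$-tiles needed to connect two designated points across shared vertices of consecutive $X_i$'s. The bound on local degrees from Lemma \ref{noperiodic} is essential for keeping this bridging cost independent of $n$ and $k$; without it, even a single vertex of high local degree could require an unbounded number of tiles to navigate around, and the whole estimate would degrade.
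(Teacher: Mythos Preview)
The paper does not actually prove this proposition; it simply quotes it as Proposition~17.1 of \cite{BMExpanding}. Your overall strategy---a Fekete-type argument for existence together with a tile/flower comparison for curve-independence---is indeed the one used in \cite{BMExpanding}, so the outline is sound.

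There is, however, a genuine gap in your submultiplicativity step. You assert that connecting the two boundary points $f^n(p_i),f^n(q_i)$ of the $0$-tile $T_i$ by $k$-tiles can be done with at most $K'D_k$ tiles. This is false in general: $D_k$ measures the \emph{shortest} crossing, not the diameter of a $0$-tile in $k$-tiles. In the $2\times 3$ Latt\`es-type map of Example~\ref{nonlatteseg} one has $D_k=2^k$, yet connecting two points at opposite corners of a $0$-tile requires on the order of $3^k$ many $k$-tiles; the ratio $(3/2)^k$ is unbounded, so no constant $K'$ can work. Compare this with Lemma~\ref{leq} of the present paper, where an estimate of exactly this type is established only under the additional hypothesis $(**)$, and with considerable effort. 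Your construction therefore yields at best $D_{n+k}\le K D_n\cdot E_k$ where $E_k$ is the $k$-tile diameter of a $0$-tile, and Fekete does not apply. The actual inequality $D_{n+k}\le K D_n D_k$ is true, but one proves it by lifting a \emph{minimal} $k$-chain (rather than trying to connect prescribed points) and controlling where the lift lands; this is where the real work lies.

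A secondary issue: your phrase ``the $(n+k)$-tiles inside $X_i$'' presumes that $(n+k)$-tiles subdivide $n$-tiles, which holds only when $\mathcal C$ is $f$-invariant. One must either assume this (via Theorem~\ref{invariantJordancurve}, passing to an iterate and a special curve) and then invoke the comparison inequality to transfer to arbitrary $\mathcal C$, or else lift $k$-tile chains through the branched cover $f^n$ with care about which sheet one lands on. Your independence argument via Lemma~7.12 of \cite{BMExpanding} is essentially correct; note though that the reduction from flowers back to tiles via Lemma~\ref{noperiodic} imposes the no-periodic-critical-point hypothesis, which is not part of the proposition as stated. The argument in \cite{BMExpanding} circumvents this by working with flowers throughout rather than converting to tiles.
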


We call $\Lambda_0(f)$ the \emph{combinatorial expansion factor} of $f$.

Proposition 17.2 in \cite{BMExpanding} states that:
\begin{pro}
If $f\:\S^2\ra \S^2$ and $g\:\S^2_1\ra \S^2_1$ are expanding Thurston maps that are topologically conjugate, then $\Lambda_0(f)=\Lambda_0(g)$.
\end{pro}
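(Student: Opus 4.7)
The plan is to use the topological conjugacy to transport cell decompositions from one sphere to the other, observe that the combinatorial data defining $D_n$ is preserved, and then invoke Proposition \ref{expansionfactor} to eliminate dependence on the Jordan curve.

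Let $h\:\S^2\ra \S^2_1$ be a homeomorphism with $h\circ f=g\circ h$. First I would record the iterated relation $h\circ f^n=g^n\circ h$ for all $n\in \N_0$, and note that $h$ carries critical points, critical values, and hence $\post(f)$ bijectively onto $\post(g)$. Choose any Jordan curve $\mathcal{C}\subseteq \S^2$ with $\mathcal C\supseteq \post(f)$, and set $\mathcal{C}':=h(\mathcal{C})$, which is then a Jordan curve in $\S^2_1$ containing $\post(g)$.

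Next I would show that $h$ maps the cell decomposition $\D^n(f,\mathcal{C})$ bijectively onto $\D^n(g,\mathcal{C}')$, preserving dimensions. This goes by induction on $n$. For $n=0$: the $0$-edges of $(f,\mathcal C)$ are the closures of components of $\mathcal{C}\setminus \post(f)$, which $h$ maps to the closures of components of $\mathcal{C}'\setminus \post(g)$, namely the $0$-edges of $(g,\mathcal{C}')$; likewise for $0$-tiles via Lemma \ref{tilenumber}(4). For the inductive step, recall that $n$-cells are the connected sets $c$ such that $f(c)$ is an $(n-1)$-cell of $(f,\mathcal{C})$ and $f|_c$ is a homeomorphism; applying the conjugacy relation and the induction hypothesis one sees that $c$ is an $n$-cell of $(f,\mathcal C)$ if and only if $h(c)$ is an $n$-cell of $(g,\mathcal{C}')$.

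Having established this bijection, I would check that it respects the combinatorial notion in Definition \ref{joinoppositesides}: since $h$ is a homeomorphism preserving $\post(f)\leftrightarrow\post(g)$ and the $0$-edges, a union of $n$-tiles of $(f,\mathcal C)$ is connected and joins opposite sides of $\mathcal C$ if and only if its image under $h$ is a union of $n$-tiles of $(g,\mathcal{C}')$ that is connected and joins opposite sides of $\mathcal{C}'$. From \eqref{defdn} this yields the equality
\[
D_n(f,\mathcal C)=D_n(g,\mathcal{C}')
\]
for every $n\in \N_0$. Taking $n$-th roots and letting $n\ra \infty$, Proposition \ref{expansionfactor} gives
\[
\Lambda_0(f)=\lim_{n\ra\infty}D_n(f,\mathcal C)^{1/n}=\lim_{n\ra\infty}D_n(g,\mathcal{C}')^{1/n}=\Lambda_0(g),
\]
since the latter limit is independent of the choice of Jordan curve containing $\post(g)$.

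There is no real obstacle here; the only step requiring any care is the inductive verification that $h$ intertwines the two cell decompositions, and even that is essentially formal once the $n=0$ case and the conjugacy relation $h\circ f^n=g^n\circ h$ are in hand. The argument never uses expansion beyond the existence of the limit defining $\Lambda_0$, so the statement is really an invariance-under-relabeling observation.
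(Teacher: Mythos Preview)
Your argument is correct and matches the paper's approach: the paper records (as the lemma immediately following this proposition) that $D_n(f,\mathcal C)=D_n(g,h(\mathcal C))$ ``follows directly from the definitions of $D_n$ and topological conjugacy,'' and then the conclusion is immediate from Proposition~\ref{expansionfactor}. Your inductive verification that $h$ intertwines the cell decompositions simply unpacks what the paper leaves implicit.
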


Let $f$ be an expanding Thurston map. For any two Jordan curves $\mathcal{C}$ and $\mathcal{C'}$ with $\post(f)\subset \mathcal C,\mathcal C'$, inequality (17.1) in \cite{BMExpanding} states that there exists a constant $c>0$ such that for all $n>0$,
\[ \frac1{c}D_n(f,\mathcal C)\leq D_n(f,\mathcal C')\leq c D_n(f,\mathcal C).\]
We obtain the following lemma:

\excise{
\begin{eqnarray*}
\lim_{n\ra\infty}D_n(f,\mathcal{C'})^{1/n} &=& \lim_{n\ra\infty}D_n(f,\mathcal{C})^{1/n}, \\
{\rm i.e. }\quad  \lim_{n\ra\infty}\left(\frac{D_n(f,\mathcal{C'})}{D_n(f,\mathcal{C})}\right)^{1/n} &=& 1, \\
\end{eqnarray*}
so there exists $\delta\leq 1$, such that
\[\left(\frac{D_n(f,\mathcal{C'})}{D_n(f,\mathcal{C})} \right)^{1/n}\geq  \delta. \]
Hence, for any Jordan curves $\mathcal{C}$, $\mathcal{C'}$ containing $\post(f)$,
\begin{equation*}
D_n(f,\mathcal{C'}) \geq \delta^n D_n(f,\mathcal{C}),
\end{equation*}
where $\delta\leq 1$.
}

\begin{lem} \label{dn}
With the notation above, there exists a constant $c>0$ such that $D_n(f,\mathcal C)\geq c(\deg f)^{n/2}$ if and only if there exists a constant $c'>0$ such that $D_n(f,\mathcal C')\geq c'(\deg f)^{n/2}$ for all $n>0$.
\end{lem}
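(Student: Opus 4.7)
The proof will be essentially immediate from the inequality stated just before the lemma, namely \cite[(17.1)]{BMExpanding}: there exists a constant $c_0>0$ (depending on $f,\mathcal C,\mathcal C'$ but not on $n$) such that
\[\tfrac{1}{c_0}D_n(f,\mathcal C)\leq D_n(f,\mathcal C')\leq c_0 D_n(f,\mathcal C)\]
for every $n>0$. The whole point of the lemma is that the lower bound condition $D_n\geq c(\deg f)^{n/2}$ survives multiplicative perturbations of the constant, so it survives passage between any two Jordan curves containing $\post(f)$.

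Concretely, the plan is: assume first that $D_n(f,\mathcal C)\geq c(\deg f)^{n/2}$ for all $n>0$. Applying the left inequality above, we get
\[D_n(f,\mathcal C')\geq \tfrac{1}{c_0} D_n(f,\mathcal C)\geq \tfrac{c}{c_0}(\deg f)^{n/2},\]
so the conclusion holds with $c'=c/c_0$. For the converse direction, the argument is symmetric: from $D_n(f,\mathcal C')\geq c'(\deg f)^{n/2}$ and the right inequality (equivalently, the left inequality with the roles of $\mathcal C$ and $\mathcal C'$ interchanged, which is valid since \cite[(17.1)]{BMExpanding} is symmetric in the two curves), we deduce $D_n(f,\mathcal C)\geq (c'/c_0)(\deg f)^{n/2}$.

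There is no real obstacle here; the lemma is a packaging statement that isolates the one consequence of the curve-independence estimate \cite[(17.1)]{BMExpanding} that will be needed in Theorem~\ref{main} (and in the statement of Theorem~\ref{main0}). The only thing to note is that the constant $c_0$ depends on the pair $(\mathcal C,\mathcal C')$ but not on $n$, which is exactly what makes the implication work uniformly in $n$.
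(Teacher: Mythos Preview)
Your proof is correct and matches the paper's approach exactly: the paper simply states the inequality from \cite[(17.1)]{BMExpanding} and then says ``We obtain the following lemma,'' giving no further argument. Your write-up just makes explicit the one-line deduction the paper leaves implicit.
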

So we may say that $D_n\geq c(\deg f)^{n/2}$ for some $c>0$ without specifying Jordan curves.

\begin{lem}
Let $f$ and $g$ be two expanding Thurston maps that are topologically conjugate via a homeomorphism $h$. Let $\mathcal C$ be a Jordan curve on $\S^2$ containing $\post(f)$, and let $\mathcal C'$ be the image of $\mathcal C$ under $h$. Then \[D_n(f,\mathcal C)=D_n(g,\mathcal C')\] for all $n\geq 0$.
\end{lem}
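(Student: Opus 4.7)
The plan is to show that the homeomorphism $h$ transports the entire combinatorial data of $(f,\mathcal C)$ onto the combinatorial data of $(g,\mathcal C')$, so that the minimum in the definition of $D_n$ is the same on both sides.

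First I would verify that $h(\post(f))=\post(g)$. Because a topological conjugacy preserves local degrees, $h$ maps the critical set of $f$ bijectively to the critical set of $g$, and iterating the intertwining identity $h\circ f=g\circ h$ shows $h(f^n(c))=g^n(h(c))$ for every critical point $c$ and every $n\in\N$. In particular $\mathcal C'=h(\mathcal C)$ is a Jordan curve on $\S^2_1$ containing $\post(g)$, so the quantity $D_n(g,\mathcal C')$ is well-defined.

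Next I would show that $h$ sends $n$-tiles of $(f,\mathcal C)$ bijectively onto $n$-tiles of $(g,\mathcal C')$. By induction on $n$, the relation $h\circ f^n=g^n\circ h$ gives $h(f^{-n}(\mathcal C))=g^{-n}(\mathcal C')$. Since $h$ is a homeomorphism, it maps connected components of $\S^2\setminus f^{-n}(\mathcal C)$ bijectively to connected components of $\S^2_1\setminus g^{-n}(\mathcal C')$, and taking closures gives the desired bijection on $n$-tiles by Lemma~\ref{tilenumber}(4). The same argument applied to $n=0$ shows that $h$ restricts to a bijection between the $0$-edges of $(f,\mathcal C)$ and those of $(g,\mathcal C')$, because the $0$-edges are precisely the closures of the components of $\mathcal C\setminus\post(f)$.

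Finally I would use these bijections together with the fact that $h$ preserves connectedness, disjointness, adjacency of $0$-edges, and unions to transfer the defining condition of $D_n$. Explicitly, if $X_1,\ldots,X_N$ are $n$-tiles for $(f,\mathcal C)$ whose union is connected and joins opposite sides of $\mathcal C$, then $h(X_1),\ldots,h(X_N)$ are $n$-tiles for $(g,\mathcal C')$ whose union is connected and, since $h$ sends disjoint (resp.\ all three, when $\#\post(f)=3$) $0$-edges of $\mathcal C$ to disjoint (resp.\ all three) $0$-edges of $\mathcal C'$, joins opposite sides of $\mathcal C'$. The same argument with $h^{-1}$ gives the reverse inclusion, so the two minima defining $D_n(f,\mathcal C)$ and $D_n(g,\mathcal C')$ coincide.

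There is no real obstacle here; the only point that needs a moment's care is Definition~\ref{joinoppositesides} in the case $\#\post(f)=3$, which requires the set to meet \emph{all three} $0$-edges. This condition is manifestly preserved by $h$ because $h$ induces a bijection on the set of $0$-edges.
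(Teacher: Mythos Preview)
Your proof is correct and follows exactly the approach the paper intends: the paper simply remarks that the lemma follows directly from the definitions of $D_n$ and topological conjugacy, and your argument spells out precisely why. There is nothing to add.
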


This lemma follows directly from the definitions of $D_n$ and topological conjugacy.

\bigskip

Recall that the \emph{maximum norm} (or \emph{$l^{\infty}$ norm}) of a vector
\[v=(x_1,\ldots,x_n)\in~\R^n\] is
\[\|v\|_{\infty}=\max\{|x_1|,\ldots,|x_n|\}.\]
Let $A\:\R^n\ra \R^n$ be an $\R$-linear map. Then the \emph{($l^{\infty}$-)operator norm} is
\[\|A\|_{\infty}:= \max\{ \left\|Av\right\|_{\infty}\:v\in \R^n,\|v\|_{\infty}=1\}.\]
\smallskip

Let $f$ be a Latt\`es-type map over a lattice $\L$ with orbifold type $(2,2,2,2)$. There exist a torus endomorphism $ \bar A\:\T\ra \T$ and a branched covering map $\Theta\: \T\ra \S^2$ induced by a group action on $\T$ such that $f\circ \Theta =\Theta \circ  \bar A$, where $\T=\R^2/\L$. We use $A$ to denote an affine map lifted to the covering of $\T$ with $L$ as the corresponding linear map. By the remark after Lemma \ref{integerlattice}, we may assume $\L=2\Z^2$. For a Latt\`es-type map with orbifold type $(2,2,2,2)$, we can identify $\Theta\circ p\:\R^2\ra\S^2$, where $p\: \R^2\ra \R^2/(2\Z^2)$ is the quotient map, with the Weierstrass function $\wp\: \R^2\ra \S^2$ with the lattice $2\Z^2$.
See the diagram below.
\[\xymatrix{
  \R^2 \ar[d]_{\wp} \ar[r]^{A}
                & \R^2 \ar[d]^{\wp}  \\
  \S^2  \ar[r]_{f}
                & \S^2 .            }
\]
Let the Jordan curve $\mathcal{C}$ on $\S^2$ be the image of the boundary of the unit square $[0,1]\times[0,1]$ under $\wp$.

\begin{pro}\label{dnrelation}
Let $f$ be a Latt\`es-type map with orbifold type $(2,2,2,2)$. Let $A$ be its affine map from $\R^2$ to $\R^2$ with $L$ as the corresponding linear map and $\wp\: \R^2\ra \S^2$ be the Weierstrass function with the lattice $2\Z^2$(as in the remark above). We have
\[\frac{1}{\|L^{-n}\|_{\infty}}\leq D_n(f,\mathcal{C})\leq \frac{1}{\|L^{-n}\|_{\infty}}+1,\]
where the Jordan curve $\mathcal{C}$ is the image of the boundary of the unit square $[0,1]\times[0,1]$ under $\wp$.
\end{pro}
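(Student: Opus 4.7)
The plan is to transfer the counting problem to $\R^2$ via $\wp$. From $f \circ \wp = \wp \circ A$, the $\wp$-preimages of the $n$-tiles form a tiling of $\R^2$ by translates of the parallelogram $L^{-n}([0,1]^2)$. Writing $L^{-n} = \left(\begin{smallmatrix} a & b \\ c & d \end{smallmatrix}\right)$, this parallelogram has $x$-extent $|a|+|b|$ and $y$-extent $|c|+|d|$, so $\|L^{-n}\|_\infty = \max\{|a|+|b|, |c|+|d|\}$. Since $\wp$ is $2\Z^2$-periodic and identifies $v \sim -v$, the $\wp$-preimages of the bottom and top $0$-edges $\wp([0,1]\times\{0\})$ and $\wp([0,1]\times\{1\})$ are the horizontal families $\R \times 2\Z$ and $\R \times (2\Z+1)$, with analogous vertical families for the left and right $0$-edges. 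Since $\#\post(f) = 4$, joining opposite sides of $\mathcal C$ means joining either the top--bottom or the left--right pair.

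For the \emph{lower bound}, consider a minimal chain of $N = D_n$ tiles joining the top and bottom $0$-edges (the left--right case is symmetric). Choose a simple continuous path $\gamma$ in the chain from top to bottom for which each preimage $\gamma^{-1}(X_j)$ is a single interval, perturbing if necessary to avoid the finitely many postcritical branch points of $\wp$. Lift $\gamma$ to $\tilde\gamma\colon [0,1] \to \R^2$ starting on the line $y=1$; its endpoint lies on $\R \times 2\Z$, so the $y$-coordinate of $\tilde\gamma$ sweeps an interval of length at least $1$. Away from the postcritical points, the preimage parallelograms of a single $n$-tile are pairwise disjoint, so by continuity $\tilde\gamma\big|_{\gamma^{-1}(X_j)}$ lies in one preimage parallelogram of $X_j$, and $\tilde\gamma$ visits at most $N$ parallelograms in total. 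Each contributes at most $|c|+|d|$ to the $y$-coverage, giving $N \geq 1/(|c|+|d|)$; analogously for a left--right chain $N \geq 1/(|a|+|b|)$, so $D_n \geq 1/\|L^{-n}\|_\infty$.

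For the \emph{upper bound}, assume $|c|+|d| = \|L^{-n}\|_\infty$, and set $\epsilon_1 = \operatorname{sgn}(c)$, $\epsilon_2 = \operatorname{sgn}(d)$ with the convention $\operatorname{sgn}(0) = 1$, so that $\epsilon_1 c + \epsilon_2 d = |c|+|d|$. Pick $P_0$ in the $\R^2$-tiling to be the translate of $L^{-n}([0,1]^2)$ whose lowest-$y$ vertex is the origin, and define the diagonal chain $P_k = P_0 + k\,L^{-n}(\epsilon_1, \epsilon_2)$ for $k = 0, 1, \ldots, K$ with $K + 1 = \lceil 1/(|c|+|d|) \rceil$. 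Consecutive tiles $P_k, P_{k+1}$ correspond to unit squares in the integer grid differing by the vector $(\epsilon_1, \epsilon_2) \in \{\pm 1\}^2$, so they share exactly one vertex and $\bigcup_k P_k$ is connected. The $y$-range of $P_k$ is $[k(|c|+|d|), (k+1)(|c|+|d|)]$, whence $P_0$ meets $y = 0$ and $P_K$ contains $y = 1$; applying $\wp$ produces a connected chain on $\S^2$ of at most $K+1 \leq 1/\|L^{-n}\|_\infty + 1$ tiles joining the bottom and top $0$-edges. The main technical obstacle is the lifting step in the lower bound: one must ensure that a simple path avoiding the postcritical branch points exists inside the closed chain, and that the ramified behavior of $\wp$ does not allow $\tilde\gamma$ to jump between the two preimage parallelograms of a tile at a postcritical vertex.
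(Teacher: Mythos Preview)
Your proposal is correct and follows essentially the same approach as the paper: transfer the problem to $\R^2$ via $\wp$, where the $n$-tiles correspond to the parallelogram tiling by $A^{-n}$-images of unit squares, and then count how many parallelograms are needed to span a unit gap in the $x$- or $y$-direction. The paper organizes this slightly differently, first introducing an auxiliary quantity $D'_n$ (the minimum number of $n$-parallelograms connecting the lines $y=0$ and $y=1$, or $x=0$ and $x=1$), proving $D_n=D'_n$ by a lifting argument analogous to yours, and then computing $D'_n$ through the equivalence ``$L^{-n}([-m,m]^2)$ meets $\{y=1\}$ if and only if $m$ parallelograms suffice.'' Your explicit diagonal chain for the upper bound and your $y$-extent estimate for the lower bound recover exactly these two directions, just packaged more constructively.

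One small point to tighten: your parallelograms $P_k$ live in the $L^{-n}$-tiling (you take the origin as a vertex), whereas the genuine $\wp$-preimages of $n$-tiles form the $A^{-n}$-tiling, which is a translate of the former. The paper makes this reduction explicit (``for the calculation of $D'_n$, we may assume that $A=L$''); you should insert the same remark before your upper-bound construction. Your self-flagged concern about lifting near the branch points is legitimate but routine: once the path is perturbed off the finitely many postcritical points, $\wp$ is a local homeomorphism along it and the lift stays in a single preimage parallelogram over each tile, exactly as you outline.
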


\begin{proof}
\excise{
If $f$ is any Latt\`es-type map over a lattice $\L$, there exist a torus endomorphism $ \bar A\:\T\ra \T$ and a branched covering map $\Theta\: \T\ra \S^2$ induced by a group action on $\T$, such that $f\circ \Theta =\Theta \circ  \bar A$, where $\T=\R^2/\L$. We use $A$ to denote an affine map lifted to the covering of $\T$. By Lemma \ref{integerlattice}, we may assume $\L=2\Z^2$. For a Latt\`es-type map with orbifold type $(2,2,2,2)$, we can identify $\Theta\circ p\:\R^2\ra\S^2$, where $p\: \R^2\ra \R^2/(2\Z^2)$ is the quotient map, with the Weierstrass function $\wp\: \R^2\ra \S^2$ with the lattice $2\Z^2$.
See the diagram below.
\[\xymatrix{
  \R^2 \ar[d]_{\wp} \ar[r]^{A}
                & \R^2 \ar[d]^{\wp}  \\
  \S^2  \ar[r]_{f}
                & \S^2 .            }
\]
Let the Jordan curve $\mathcal{C}$ on $\S^2$ be the image of the boundary of the unit square $[0,1]\times[0,1]$ under $\wp$. }

The idea of the proof is to lift everything to $\R^2$. Since the unit square is homeomorphic to a $0$-tile, $D_n(f,\mathcal{C})$ is the same as the number of pre-images of the unit squares under $A^n$ needed to join the opposite sides of the unit square. We present the details below.

Notice that the pre-image of $\mathcal{C}$ under $\wp$ is the whole grid of $\Z^2$ (i.e., the union of all the horizontal and vertical lines containing an integer-valued point), and $\mathcal{C}$ contains all the post-critical points of $f$. The restriction of $\wp$ to the interior of the rectangle $R_0:=[0,2]\times[0,1]$ is a homeomorphism onto its image, which is the union of the interiors of the $0$-tiles of $\S^2$ and one edge of a $0$-tile. Notice that the same holds for any rectangle obtained from two adjacent unit squares. The pre-images of unit squares under $A^n$ are parallelograms, which we call \emph{$n$-parallelograms}.

The $n$-tiles of $(f,\mathcal{C})$ (i.e., the pre-images of $0$-tiles under $f^n$) are the images of $n$-parallelograms under $\wp$. Let $D_v$ be the minimum number of $n$-parallelograms connecting the line $\{0\}\times(-\infty,+\infty)$ and $\{1\}\times(-\infty,+\infty)$, and let $D_h$ be the minimum number of $n$-parallelograms connecting the lines $(-\infty,+\infty)\times\{0\}$ and $(-\infty,+\infty)\times\{1\}$. We define \[D'_n:=\min\{D_v,D_h\}.\]

We claim that $D_n=D'_n$.
Let $T_1,T_2,\ldots, T_{D_n}$ be a sequence of $n$-tiles with the minimum number of $n$-tiles joining opposite sides of a 0-tile. Without loss of generality, we may assume that this 0-tile is the image of $[0,1]\times[0,1]$ under $\wp$, and the opposite sides of the 0-tile are the images of the sides $[0,1]\times\{0\}$ and $[0,1]\times\{1\}$. Let $T_1'$ be the connected component of $\wp^{-1}(T_1)$ intersecting with $[0,1]\times[0,1]$, which is an $n$-parallelogram. Let $T_2'$ be the component of $\wp^{-1}(T_2)$ intersecting with $T_1'$, which is also an $n$-parallelogram. Let $T_3'$ be the component of $\wp^{-1}(T_3)$ intersecting with $T_2'$, and so on. We obtain a sequence of $n$-parallelograms $T_1',\ldots, T_{D_n}'$ connecting $(-\infty,+\infty)\times\{0\}$ and $(-\infty,+\infty)\times\{1\}$, and hence $D_n'\leq D_n$. 
On the other hand, suppose that a sequence of $n$-parallelograms $P_1,\ldots, P_m$ connects $(-\infty,+\infty)\times\{0\}$ and $(-\infty,+\infty)\times\{1\}$, or connects $\{0\}\times(-\infty,+\infty)$ and $\{1\}\times(-\infty,+\infty)$. Then the sequence $\wp(P_1),\ldots, \wp(P_m)$ of $n$-tiles connects a pair of opposite sides of a 0-tile. We conclude that $D_n'=D_n$ as desired.

Since $A$ and $L$ differ be a translation of an element in $2\Z^2$, every $n$-parallelogram with respect to $A$ is an  $n$-parallelogram with respect to $L$, and vice versa. So we may assume that $A=L$.
Without loss of generality, we may assume that $D_h\leq D_v$, so that $D_n'=D_h$. Observe that we need at least $m$ parallelograms to connect a pair of opposite sides of an $(m\times m)$-grid of parallelograms.
Notice that
\begin{eqnarray} \label{lma}
L^{-n}([-m,m]\times[-m,m])\cap(-\infty,+\infty)\times \{1\}\not=\emptyset
\end{eqnarray}
if and only if there exist $m$ $n$-parallelograms connecting $(-\infty,+\infty)\times\{0\}$ and $(-\infty,+\infty)\times\{1\}$.
Hence, $D_n'$ is equal to the smallest positive integer $m$ such that  $y_0=\max\{y_{(\pm m,\pm m)}\}$ is greater than 1, where
\[\left(
  \begin{array}{c}
    x_{(\pm m,\pm m)}\\
    y_{(\pm m,\pm m)} \\
  \end{array}
\right)
=L^{-n}
\left(
  \begin{array}{c}
    \pm m\\
    \pm m \\
  \end{array}
\right),
\]
and $(\pm m, \pm m)$ varies over $(m,m), (m,-m), (-m,m)$ and $(-m,-m)$.
 Since the image of $\{v\: \|v \|_\infty  = 1\} = \partial [-1,1]^2$ under $L^{-n}$ is the boundary of a parallelogram,
\[\|L^{-n}\|_{\infty}= \max \left\|L^{-n}
\left(
  \begin{array}{c}
    \pm 1\\
    \pm 1 \\
  \end{array}
\right)\right\|_{\infty}, \]
so
\[D_n'\|L^{-n}\|_{\infty}= \max \left\|L^{-n}
\left(
  \begin{array}{c}
    \pm D_n'\\
    \pm D_n' \\
  \end{array}
\right)\right\|_{\infty}=y_0\geq 1.\]
Hence,
\[\frac{1}{\|L^{-n}\|_{\infty}}\leq D_n'\leq \frac{1}{\|L^{-n}\|_{\infty}}+1.\]
Since $D_n=D_n'$, the proof is complete.
\end{proof}

\begin{cor} \label{combexpansionfactor}
Let $f$ be a Latt\`es-type map with orbifold type $(2,2,2,2)$, and let $A$ be its affine map from $\R^2$ to $\R^2$.
Then the combinatorial expansion factor $\Lambda_0(f)$ equals the minimum absolute value of the eigenvalues of $A$.
\end{cor}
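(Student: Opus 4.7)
The plan is to combine Proposition~\ref{dnrelation} directly with Gelfand's spectral radius formula. Starting from the double inequality
\[\frac{1}{\|L^{-n}\|_{\infty}} \le D_n(f,\mathcal{C}) \le \frac{1}{\|L^{-n}\|_{\infty}} + 1,\]
I would take $n$-th roots throughout and pass to the limit, using Proposition~\ref{expansionfactor} to identify $\Lambda_0(f) = \lim_{n\to\infty} D_n(f,\mathcal{C})^{1/n}$.

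For the lower bound, Gelfand's formula applied to the $\ell^\infty$ operator norm (which is valid since all norms on $\mathrm{Mat}_2(\R)$ are equivalent) gives
\[\lim_{n\to\infty}\|L^{-n}\|_{\infty}^{1/n} \;=\; \rho(L^{-1}) \;=\; \frac{1}{\min\{|\lambda|:\lambda\text{ an eigenvalue of }L\}}.\]
Hence $\|L^{-n}\|_{\infty}^{-1/n}$ tends to $\lambda_{\min} := \min\{|\lambda_1|, |\lambda_2|\}$, where $\lambda_1, \lambda_2$ are the eigenvalues of $L$.

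For the upper bound, I would use the fact that by Definition~\ref{lattetype} the eigenvalues of $\bar A$ (equivalently of $L$) have absolute value strictly greater than $1$, so $\|L^{-n}\|_{\infty} \to 0$ and therefore $\|L^{-n}\|_{\infty}^{-1}\to \infty$. Factoring,
\[\Bigl(\|L^{-n}\|_{\infty}^{-1} + 1\Bigr)^{1/n} \;=\; \|L^{-n}\|_{\infty}^{-1/n}\bigl(1 + \|L^{-n}\|_{\infty}\bigr)^{1/n},\]
and the second factor tends to $1$, while the first factor tends to $\lambda_{\min}$ by the same Gelfand argument. A squeeze then yields $\Lambda_0(f) = \lambda_{\min}$. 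Since by definition the eigenvalues of the affine map $A$ are those of its linear part $L$, this is precisely the minimum absolute value of the eigenvalues of $A$.

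There is no substantial obstacle: the only care needed is to verify that the additive ``$+1$'' in Proposition~\ref{dnrelation} becomes negligible after extracting $n$-th roots, which follows immediately from the expansion hypothesis on $\bar A$. The argument is essentially an exercise in combining a sharp two-sided estimate with the spectral radius formula.
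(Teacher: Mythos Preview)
Your argument is correct and follows essentially the same route as the paper: both apply Proposition~\ref{dnrelation}, take $n$-th roots, and invoke Gelfand's spectral radius formula to identify the limit as $1/\rho(L^{-1})=|\lambda_1|$. Your treatment of the ``$+1$'' term is in fact slightly more explicit than the paper's, but the underlying idea is identical.
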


\begin{proof}
Let $L$ be the linear map of $A$.
By the previous proposition,
\[\frac{1}{\|L^{-n}\|_{\infty}}\leq D_n\leq \frac{1}{\|L^{-n}\|_{\infty}}+1.\]
Taking $n$-th roots gives
\[\left(\frac{1}{\|L^{-n}\|_{\infty}}\right)^{1/n}\leq {D_n}^{1/n}\leq \left(\frac{1}{\|L^{-n}\|_{\infty}}+1\right)^{1/n},\]
so by Gelfand's formula (see Theorem 13 in \cite[Chapter 8]{LaxLinear}),
\[\lim_{n\ra\infty}{D_n}^{1/n}=\lim_{n\ra\infty}\frac{1}{\|L^{-n}\|_{\infty}^{1/n}}=\frac{1}{\rho(L^{-1})},\] where $\rho(L^{-1})$ is the spectral radius of $A^{-1}$. On the other hand, the spectral radius of $L^{-1}$ is the maximal absolute value of the eigenvalues of $L^{-1}$, which is equal to $1/|\lambda_1|$, where $|\lambda_1|$ is the minimum absolute value of the eigenvalues of $A$ (and $L$). We conclude that
\[\Lambda_0(f)=\lim_{n\ra\infty}D_n^{1/n}=|\lambda_1|.\]
\end{proof}


\begin{pro} \label{opinequality}
Let $f$ be a Latt\`es map over $\L$ and let $\mathcal C\subset \S^2$ be a Jordan curve containing all postcritical points of $f$. Then there exists a constant $c>0$ such that $D_n(f,\mathcal C)\geq c\,(\deg f)^{n/2}$ for all $n>0$.
\end{pro}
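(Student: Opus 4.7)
The plan is to exploit the structural fact that a rational Latt\`es map has a \emph{holomorphic} affine lift, not merely a real-affine one. By Lemma~\ref{integerlattice} we may write $f\circ\Theta=\Theta\circ\bar A$ with $\T=\R^2/\Lambda$ for a convenient lattice $\Lambda$, and let $A\colon\R^2\to\R^2$ be an affine lift of $\bar A$ with linear part $L$. Since $f$ is rational and $\Theta$ is holomorphic in the natural complex coordinates, $A\colon\C\to\C$ must itself be holomorphic, so $A(z)=\alpha z+\beta$ for some $\alpha\in\C$ with $|\alpha|>1$. Thus $L$ acts on $\C\cong\R^2$ as multiplication by $\alpha$, and both eigenvalues of $L$ have absolute value $|\alpha|$. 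Combining with $\deg f=\det L=|\alpha|^2$ (Lemma~\ref{lattesexpanding1}), we obtain $|\alpha|=(\deg f)^{1/2}$.

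The next step is to convert this algebraic identity into a geometric lower bound on $D_n$. Equip $\S^2$ with the pushforward of the flat Euclidean metric on $\T$ under $\Theta$. Because $\Theta$ is a finite branched cover induced by a rigid rotation action of a cyclic group $G$ (Lemma~\ref{group}), this is a genuine path metric on $\S^2$, flat except for cone singularities at $\post(f)$. Fix a Jordan curve $\mathcal{C}_0\subset\S^2$ whose preimage $\Theta^{-1}(\mathcal{C}_0)$ is the $1$-skeleton of a $G$-equivariant tiling of $\T$ by fundamental domains (squares for orbifold types $(2,2,2,2)$ and $(2,4,4)$, equilateral triangles for $(3,3,3)$ and $(2,3,6)$). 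Since $\bar A^{-n}$ uniformly scales Euclidean distances on $\T$ by $|\alpha|^{-n}$, each $n$-tile of $(f,\mathcal{C}_0)$, viewed as a subset of $\S^2$ under the pushforward metric, has diameter at most $C|\alpha|^{-n}$ for a constant $C$ depending only on $f$ and $\mathcal{C}_0$.

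Now let $T_1,\dots,T_N$ be $n$-tiles for $(f,\mathcal{C}_0)$ whose union $K$ is connected and joins opposite sides of $\mathcal{C}_0$. The two disjoint $0$-edges of $\mathcal{C}_0$ that $K$ meets lie at positive flat distance $\delta>0$ apart, so $K$ has diameter at least $\delta$. On the other hand, as a connected union of $N$ sets each of diameter at most $C|\alpha|^{-n}$, the diameter of $K$ is at most $NC|\alpha|^{-n}$. Combining gives
\[N\ \geq\ \frac{\delta}{C}\,|\alpha|^n\ =\ \frac{\delta}{C}\,(\deg f)^{n/2},\]
so $D_n(f,\mathcal{C}_0)\geq(\delta/C)(\deg f)^{n/2}$. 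Invoking Lemma~\ref{dn} upgrades the bound to an arbitrary Jordan curve $\mathcal{C}\supseteq\post(f)$ at the cost of adjusting the constant.

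The main obstacle is essentially setup: in each of the four orbifold types one must exhibit the Jordan curve $\mathcal{C}_0$ through $\post(f)$ coming from a $G$-equivariant tiling of $\T$ (already done in Proposition~\ref{dnrelation} for type $(2,2,2,2)$, and routine in the other three cases), and verify that the pushforward metric separates disjoint $0$-edges of $\mathcal{C}_0$ by a definite positive distance. These are purely local checks at the cone points; once they are in place, the estimate above is immediate from the scaling property of $\bar A^{-n}$ together with the conformality $|\alpha|=(\deg f)^{1/2}$ forced by rationality.
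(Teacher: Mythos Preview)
Your argument is correct in spirit and takes a cleaner, more unified route than the paper. The paper splits into cases: for orbifold type $(2,2,2,2)$ it invokes Proposition~\ref{dnrelation} and computes $\|L^{-n}\|_\infty$ explicitly using conformality of $L$; for the three triangle types $(2,3,6)$, $(2,4,4)$, $(3,3,3)$ it argues separately by projecting the triangle tilings onto an axis and counting. Your approach avoids this split by pushing forward the flat metric and using a single diameter estimate, which is both shorter and conceptually more transparent. Both proofs hinge on the same key fact---rationality forces the lift to be $\C$-affine, so both eigenvalues of $L$ have modulus $(\deg f)^{1/2}$---but you package the geometric consequence more efficiently.

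One point needs tightening: your sentence ``The two disjoint $0$-edges of $\mathcal{C}_0$ that $K$ meets lie at positive flat distance $\delta>0$ apart'' is only literally correct when $\#\post(f)\geq 4$. When $\#\post(f)=3$ there are no disjoint $0$-edges at all; by Definition~\ref{joinoppositesides}, $K$ must instead meet \emph{all three} $0$-edges. The fix is easy: a connected set meeting all three sides of a flat triangle still has diameter bounded below by a positive constant $\delta$ (points on three distinct sides cannot all cluster near a single vertex, since only two sides meet there). You should state this explicitly rather than folding it into ``local checks at the cone points,'' since it is not a cone-point issue but a feature of the definition of $D_n$ in the three-postcritical case.
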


\begin{proof}
Let $f$ be a Latt\`es map induced by the linear map $L\:\C\ra \C$ defined by $z\mapsto \lambda z$.
It follows from the claim at the end of the proof of Lemma \ref{lattesexpanding1} that
\begin{equation}\label{e:degree}
\deg f = |\lambda|^{2}.
\end{equation}

First, assume that $f$ is a Latt\`es map with orbifold type $(2,2,2,2)$. By Proposition \ref{dnrelation} and \eqref{e:degree},
\[D_n\geq \frac1{\|L^{-n}\|_{\infty}}=\frac{1}{|\lambda^{-n}|}=|\lambda|^{n}=(\deg f)^{n/2}.\]

For Latt\`es maps with $\#\post(f)=3$, there exists a Jordan curve $\mathcal{C}$ containing the postcritical set that lifts to a
tiling of the plane by Euclidean triangles. We refer the reader to Page 13 in \cite{MilLattes} for more details.
We will call the triangles in the tiling above \emph{unit triangles}.


The idea of the proof is similar to the proof of Proposition \ref{dnrelation}. That is, we will attempt to lift everything to $\C$. Since a unit triangle is holomorphic to a $0$-tile, $D_n(f,\mathcal{C})$ is the same as the number of pre-images of the unit triangles under $L^n$ needed to connect all edges of the unit triangle. We present the details below.

The pre-images of the unit triangles under $L$ are triangles similar to the unit triangle by a ratio of $|\lambda|^n$, which we call \emph{$n$-triangles}. Since the image of a $1$-triangle under $\Theta \circ L=f\circ \Theta$ is a $0$-tile 
(see the commutative diagram below), and
since the only connected set that maps onto a $0$-tile under $f$ is a $1$-tile, we conclude that the image of a $1$-triangle under $\Theta$ is a $1$-tile.
\[\xymatrix{
  \C \ar[d]_{\Theta} \ar[r]^{L}
                & \C \ar[d]^{\Theta}  \\
  \hat{\C}  \ar[r]_{f}
                & \hat{\C} .            }
\]
Similarly, the image of an $n$-triangle under $\Theta$ is an $n$-tile since the only connected set that maps onto a $0$-tile under $f^n$ is a $n$-tile.

Let $D_n'$ be the minimum number of $n$-triangles needed to connect all three edges of a $0$-triangle $\triangle$. Assume that $D_n$ $n$-tiles connect the three edges of the $0$-tile $T=\Theta(\triangle)$.
We claim that $D_n= D_n'$.
Let $T_1,T_2,\ldots, T_{D_n}$ be a sequence of $n$-tiles connecting the three edges of the 0-tile $T$. Let $T_1'$ be the connected component of $\Theta^{-1}(T_1)$ intersecting with $\triangle$, which is an $n$-triangle. Let $T_2'$ be the connected component of $\Theta^{-1}(T_2)$ intersecting with $T_1'$, which is also an $n$-triangle. Let $T_3'$ be the connected component of $\Theta^{-1}(T_3)$ intersecting with $T_2'$, and so on. We obtain a sequence of $n$-triangles $T_1',\ldots, T_{D_n}'$ connecting the three edges of $\triangle$, and hence $D_n'\leq D_n$.
On the other hand, suppose that a sequence of $n$-triangles $T_1,\ldots, T_{D_n'}$ connects the three edges of $\triangle$. Then the sequence $\Theta(T_1),\ldots, \Theta(T_{D_n'})$ of $n$-tiles connects the three edges of the 0-tile $T$. We conclude that $D_n'=D_n$ as desired.

Let $R$ denote the length of the longest edge of the unit triangle. Then the longest edge of an $n$-triangle is $R|\lambda|^{-n}$.
Let $S$ be the union of all the edges of the $n$-triangles $T_1, \ldots T_{D_n}$. Then $S$ is a union of line segments and hence we may consider its length.
On the one hand, the length of $S$ is bounded above by $3D_nR|\lambda|^{-n}$, since $R|\lambda|^{-n}$ is the longest edge of an $n$-triangle. On the other hand, the length of $S$ is bounded below by the length $r$ of the shortest connected union of line segments connecting the three edges of a unit triangle.
Explicitly, $r$ is the minimum length of the line joining the longest edge of a unit triangle to the vertex containing the other two edges. Using \eqref{e:degree}, we conclude that
\[D_n\geq \frac{r}{3R|\lambda|^{-n}}=\frac{r}{3R}|\lambda|^n= \frac{r}{3R}(\deg f)^{n/2}.\]
\end{proof}

\excise{
First, assume that $f$ is a Latt\`es map with orbifold type $(2,2,2,2)$. The affine map $A$ is defined over $\C$, i.e., $A\:\C\ra\C$. Let $L$ be the $\C$-linear map of $A$, i.e., the derivative of $A$. Since $A$ is conformal,
\[\left\|Lv_1\right\|_{2}=\left\|Lv_2\right\|_{2} \]if $ \|v_1\|_{2}=\|v_2\|_{2}$, where $\|v\|_2=(x^2+y^2)^{1/2}$ for $v=(x,y)\in \R^2$. We have that
\[ \left\|Lv\right\|_{\infty}= \left\|L\frac{v}{\|v\|_2}\right\|_{\infty}\cdot\|v\|_2\geq \det (L)^{1/2}\|v\|_2. \]In addition, by the definition of norms we know that
\[\|v\|_2\geq\|v\|_{\infty}\] for $v\in \R^2$.
Hence, we have
\begin{eqnarray*}
\|L\|_{\infty}&=& \max\{ \left\|Lv\right\|_{\infty}\:v\in \R^2,\|v\|_{\infty}=1\}\\
 &\geq & \max\{\det (L)^{1/2}\|v\|_2\}\geq \det (L)^{1/2}.
\end{eqnarray*}
By Proposition \ref{dnrelation},
\[D_n\geq \frac1{\|L^{-n}\|_{\infty}}=\frac1{\det(L^{-n})^{1/2}}=\det(L)^{n/2}=\det(A)^{n/2}=\deg(f)^{n/2}.\]

For Latt\`es maps with $\#\post(f)=3$, by the proof of Lemma \ref{noofpost}, there are only three cases :
\[(2,3,6), (2,4,4), (3,3,3).\] Each of them corresponds to a unique tiling on the plane by triangles $\triangle$ with prescribed angles
\[\left(\frac{\pi}{2},\frac{\pi}{3},\frac{\pi}{6}\right), \left(\frac{\pi}{2},\frac{\pi}{4},\frac{\pi}{4}\right), \left(\frac{\pi}{3},\frac{\pi}{3},\frac{\pi}{3}\right)\]
respectively, up to translation and rotation. We refer the reader to Page 13 in \cite{MilLattes} for more details.

\begin{center}
\mbox{ \scalebox{0.7}{\includegraphics{triangle.eps}}}
\end{center}

In the case $(2,3,6)$ (see the figure above), we have the relation
\[D_n\leq C_n\leq 6D_n,\]
where $C_n$ is the minimum number of tiles needed to connect opposite sides of the $4$-gon. Since the map $f$ is conformal, any connected component of the preimage of a triangle $\triangle$ under $f^n$ is a triangle $\triangle'$ similar to $\triangle$ by scalar $s^{-n}=\det(A)^{-n/2}=\deg(f)^{-n/2}$, i.e., $\triangle'=s^{-n}\triangle $. From the figure, we have $C_0=2$, and notice that the minimum number of tiles needed to connect horizontal sides and vertical sides are the same. Scale the standard $4$-gon by $s^n$. In this process, scaling $\triangle'$ by $s^n$, we get that $s^n\triangle'$ is the same size as $\triangle$.
If we project the figure onto the $y$-axis, we get $s^n$ standard intervals, and each standard interval needs at least two projections of $s^n\triangle'$ triangles to connect its endpoints. Here a standard interval means an interval on the $y$-axis which is the projection of a standard $4$-gon. Hence, $C_n\geq 2s^n=2\deg(f)^{n/2}$. We get $D_n\geq \frac13\deg(f)^{n/2}$. The other two cases are easier, and can be dealt with similarly.

We conclude that $D_n\geq c\deg(f)^{n/2}$ for some $c>0$ for all Latt\`es maps.}

\excise{

Let $E_n=E_n(f,\mathcal C)$ be the minimum number of $n$-edges needed to join opposite sides of a Jordan curve $\mathcal{C}$. More precisely,
\begin{eqnarray}\label{defdn}
E_n &=& \min\{N\in \N\: \mbox{ there exist $n$-edges } a_1, . . . ,a_N,\mbox{ such that }\\
&&K =\bigcup_{j=1}^N a_j \mbox{ is connected and joins opposite sides of }\mathcal C\}. \nonumber
\end{eqnarray}
Of course, $E_n$ depends on $f$ and $\mathcal C$. We use the notation $E_n(f,{\mathcal C})=E_n$ to emphasize $f$ and $\mathcal C$.

\begin{lem}
Given an expanding Thurston map $f$, let $m=\# \post(f)$. Then
\[D_n(f,\mathcal C)\leq E_n(f,\mathcal C)\leq mD_n(f,\mathcal C) \]
for any Jordan curve $\mathcal C\subset \S^2$ containing $\post(f)$ and any $n\geq 0$.
\end{lem}
\begin{proof}
Let $\mathcal C\subset \S^2$ be a Jordan curve containing $\post(f)$. If $P_n$ is a set of $n$-tiles with cardinality $D_n$ joins the opposite sides of $\mathcal C$, then the union of all $n$-edges of $n$-tiles in $P_n$ also joins the opposite sides of $\mathcal C$. Hence, we have $E_n\leq mD_n$. Let $Q_n$ be a set of $n$-edges with cardinality $E_n$ joins the opposite sides of $\mathcal C$. For each $n$-edge in $Q_n$, we pick an $n$-tile containing this $n$-edge. The union of all such $n$-tiles joins opposite sides of $\mathcal C$. Hence, we have $D_n\leq E_n$.
\end{proof}

This lemma implies that $E_n$ and $D_n$ are interchangeable as numbers, and $E_n$ shares the same arithmetic properties as $D_n$. For the simplicity of the proof, we will use $E_n$ for the rest of this section.

Let $\Lambda=(\deg f)^{1/2}$. We refer to the following properties of a map $f\:\S^2\ra\S^2$ as $(*)$:
\[(*)\quad \begin{array}{l}
  \hbox{The map $f \:\S^2\ra\S^2$ is an expanding Thurston map with no} \\
  \hbox{periodic critical points, and there exists a constant $c>0$ such}\\
  \hbox{that $E_n=E_n(f,\mathcal C)\geq c\Lambda^{n/2}$ for all $n>0$, where $\mathcal C$ is a Jordan }\\
  \hbox{curve in $\S^2$ that is invariant under $f$ and $\post(f)\subset \mathcal C$. }\\
\end{array}\]

\begin{de}
For $n\geq 3$, we call a topological space $X$ an \emph{$n$-gon} if $X$ is homeomorphic to the closed unit disk $\D\subset \R^2$ with $n$ points marked on the boundary of $X$. Since the boundary of an $n$-gon is homeomorphic to $\S^1$, there is a natural cyclic order for the $n$ marked points on the boundary. We call these $n$-points \emph{vertices} of the $n$-gon and the parts of the boundary of $X$ joining two consecutive vertices in the cyclic order  \emph{edges} of the $n$-gon.
We say that two edges of an $n$-gon $X$ are \emph{adjacent edges} if they have a common vertex. Otherwise, we say that two edges of $X$ are \emph{opposite edges}. If a connected set $K$ intersects with two non-adjacent edges of $X$, then we say that the set $K$ \emph{joins the non-adjacent edges of $X$.}
\end{de}

Let $f$ be an expanding Thurston map. By Section \ref{expanding}, we have a sequence of cell decompositions of the underlying space $\S^2$. We define an \emph{edge chain} $P$ to be a finite sequence of edges $a_1,\ldots, a_N$, such that $a_j\cap a_{j+1}\not=\emptyset$ for $j=1,\ldots,N-1$. We also write $P=a_1a_2\ldots a_N$, and we use $|P|$ to denote the underlying set $\bigcup_{i=1}^N a_i$. In addition, if $a_n$ intersects with $a_1$, then we call the edge chain $P$ a \emph{tile loop}. For $A, B\subseteq \S^2$, we say that the edge chain $P$ \emph{joins} the sets $A$ and $B$ if $A\cap a_1\not=\emptyset$ and $B\cap a_N\not=\emptyset$. We say that the edge chain $P$ \emph{joins} the points $x$ and $y$ if $P$ joins $\{x\}$ and $\{y\}$. A \emph{subchain} of $P=a_1a_2\ldots a_N$ is an edge chain of the form $a_{j_1}, \ldots, a_{j_s}$ where $1 \leq j_1 <\ldots < j_s \leq N$. We call an edge chain $P=a_1a_2\ldots a_N$ \emph{simple} if there is no subchain of $P$ joins $a_1$ and $a_N$.

We assign a \emph{weight} to any $k$-edge $a^k$ by
\[w(a^k):=\Lambda^{-k}\] for some fixed $\Lambda>1$,
and we define the \emph{$w$-length} of an edge chain $P=a_1a_2\ldots a_N$ as
\[\mbox{length}_w(P):=\sum_{j=1}^N w(a_j). \]
Since $f$ is expanding, there exists an edge chain joining any two points $x,y\in \S^2$. \textcolor[rgb]{0.98,0.00,0.00}{(enough justification?)}
We define a distance function  $d\: \S^2\times \S^2\ra [0,\infty)$ by setting
\begin{equation}\label{metric definition}
d(x,y):=\inf_{P} \mbox{length}_w(P), \quad \mbox{ for }x,y \in \S^2,
\end{equation}
where the infimum is taken over all edge chains $P$ joining $x$ and $y$.
It is not hard to check that $d$ is symmetric and satisfies the triangle inequality. In addition, we have $d(x,x)=0$ for all $x\in \S^2$ which follows from the fact that $f$ is expanding.

Now let us review some basic definitions from graph theory (we refer the reader to \cite{DieGraph} for more details). A \emph{graph} $G$ is a pair $(V,E)$ of sets such that the \emph{edge set} $E=E(G)$ is a symmetric subset of the Cartesian product $V \times V$ of the \emph{vertex set} $V=V(G)$. We call a graph $G'=(E',V')$ a \emph{subgraph} of $G=(V,E)$ if $E'\subseteq E$ and $V'\subseteq V$, written as $G'\subseteq G$.

A \emph{(simple) path} in a graph $G=(V,E)$ is a non-empty subgraph $P=(V',E')$ of the form
\[V'=V'(P)=\{x_0,x_1,\ldots,x_k\}\quad E'=E'(P)=\{x_0x_1,x_1x_2, \ldots, x_{k-1}x_k\},\]
where the $x_i\in V'$ are all distinct, and $x_ix_{i+1}$ denotes the edge between $x_i$ and $x_{i+1}$. We also write a path as $P=x_0x_1\ldots x_k$ and call $P$ a path from $x_0$ to $x_k$. Given sets $A,B$ of vertices in $G$, we call $P=x_0x_1\ldots x_k$ an $A$-$B$ path if
\[V(P)\cap A= \{x_0\} \quad \mbox{ and }\quad V(P)\cap B=\{x_k\}.\]
Given a graph $G=(V, E)$, if $A, B, X\subseteq V$, such that every $A$-$B$ path in $G$ contains a vertex from $X$, we say that $X$ \emph{separates} the sets $A$ and $B$ in $G$. We will use the following theorem (see Theorem 3.3.1 in \cite{DieGraph}).

\begin{thm}[Menger's theorem] \label{mengers}
Let $G=(V,E)$ be a finite graph and $A, B\subseteq V$. Then the minimum cardinality of a set separating $A$ and $B$ in $G$ is equal to the maximal number of disjoint $A$-$B$ paths in $G$.
\end{thm}

\begin{pro} \label{other}
Let $f$ be a Thurston map and $\mathcal C\subset \S^2$ be a Jordan curve that is invariant under $f$ and contains $\post(f)$. There exists a constant $C>0$, such that
\[E_n=E_n(F,\mathcal{C})\leq C\deg(f)^{n/2} \] for all $n\geq 0$.
\end{pro}

\begin{proof}
First, assume that $m=\#\post(f)\geq 4$. Let $e_1,\ldots, e_m$ be $0$-edges in cyclic order.
Let $P_n$ be an $n$-edge chain consisting of $E_n$ $n$-edges joining the opposite sides of the Jordan curve $\mathcal C$. Without loss of generality, we may assume that $P_n$ joining non-adjacent edges $e_1$ and $e_l$, for some $2<l<m$. There are two cases: either $|P_n|$ is a subset of one of the $0$-tiles, or $|P_n|$ intersects with the interior of both $0$-tiles, and $e_1$ and $e_l$ share an adjacent edge, which is the only edge intersecting with $|P_n|$ besides $e_1$ and $e_l$.
Indeed, suppose that $|P_n|$ is not a subset of one of the $0$-tiles. If $|P_n|$ intersects with more than one adjacent edges of $e_1$ and $e_l$ or with a non-adjacent edge of $e_1$ or $e_l$, then we have a strictly shorter $n$-edge chain joining the opposite sides of the Jordan curve $\mathcal{C}$, which is a contradiction to the definition of $E_n$.

For the first case, the edge chain $P_n$ split one of the $0$-tile (an $m$-gon) into two parts, and pick an $0$-edge on either part, and call them $a$ and $b$ respectively.
For the second case, without loss of generality, we assume that $P_n$ is an $n$-edge chain consisting of $E_n$ edges joining opposite edges $e_1$ and $e_3$ (possibly intersecting with $e_2$). We cut along all the $0$-edges except edge $e_2$, where an edge becomes two, and unfold to get a $(2m-2)$-gon. Another way to think of this $(2m-2)$-gon is by gluing two copies of an $m$-gon along the edge $e_2$. Notice that $P_n$ divides this $(2m-2)$-gon into two parts. Pick an edge on either part which is not $e_1$ or $e_3$, and call them $a$ and $b$ respectively.
Let $A$ be the set of all $n$-edges contained in $a$, and $B$ be the set of all $n$-edges contained in $b$. Since $A$ and $B$ lie on different sides of $P_n$, $P_n$ separates $A$ and $B$. Consider the graph associated with the $n$-cell decomposition. By Menger's theorem, there are at least $E_n$ many disjoint $A$-$B$ paths. Let $N_n$ be the minimum number of edges in an $A$-$B$ path, and since an $A$-$B$ path is an $n$-edge chain joining the opposite sides of the Jordan curve $\mathcal C$, we have $E_n\leq N_n$. In addition, we know that the total number of $n$-tiles is  $2(\deg f)^n$ by Lemma \ref{tilenumber} (3). We get
\[E_n^2\leq E_nN_n\leq 2(\deg(f))^n,\]so
\[E_n\leq C\deg(f)^{n/2}\]for $C=\sqrt2$.

When $\#\post(f)=3$, then on $\S^2$, we can cut along any two of the three $0$-edges, and obtain a $4$-gon. Let $C_n$ be the minimum number of $n$-edges needed to join non-adjacent edges of this $4$-gon. Since this $C_n$ many $n$-edge chain also joins the opposite sides of the Jordan curve $\mathcal C$, we have $E_n\leq C_n$. By the same argument as $\#\post(f)\geq 4$, we have $C_n\leq  C\deg(f)^{n/2}$ for some $C>0$. Hence, $E_n\leq  C\deg(f)^{n/2}$ for some $C>0$.
\end{proof}

\begin{lem} \label{cormenger}
Let $X$ be an $n$-gon with edges $e_1,\ldots, e_n$ (in cyclic order), and let $\mathcal {D}$ be a cell decomposition of $X$ by $C$ $m$-gons, where $C>0$. If we need at least $c$ $m$-gon edges to join the non-adjacent edges of $X$, where $c>0$, then the minimum number $D(i,j)$ of $m$-gon edges that we need to join any non-adjacent edges $e_i$ and $e_j$ of $X$ satisfies
\[c \leq E(i,j)\leq \frac{C}{c} .\]
\end{lem}

\begin{proof}
Let $G=G(\mathcal{D})$ be the graph associated with the cell decomposition $\mathcal{D}$.
Let $A_i\subseteq V$ be the set of $m$-gon edges in the edge $e_i$, for $1\leq i\leq n$.

We need at least $c$ $m$-gon edges to join the non-adjacent edges $e_1$ and $e_l$ for some $2<l<n$. We claim that the cardinality of a minimum separating set of $A_i$ and $A_j$ for $1<i<l$, $l<j\leq n$ is at least $c$. Indeed, suppose that the minimum cardinality of a separating set $S$ of $A_i$ and $A_j$ is less than $c$. By the definition of $c$, the union $U$ of the underlying edges of this minimum separating set does not join non-adjacent edges of $X$. This implies that there is a connected component of $X\setminus U$ intersecting with $e_i$ and $e_j$, call it $X'$. Since $X'$ is open in $X$, the subgraph corresponding to $X'$ is connected. This is a contradiction that $S$ is a separating set.

Applying Menger's theorem on the graph  $G(\mathcal{D})$, there are at least $c$ disjoint $E_i$-$E_j$ paths, for $1<i<l$, $l<j\leq n$. Let
\[D'(i,j):=\min\{\#\mbox{ of $m$-gons in $Q$}\},\]where the minimum is taken over all $E_i$-$E_j$ paths $Q$.
Since the underlying set of an $E_i$-$E_j$ path joins the opposite edges $e_i$ and $e_j$, we obtain that \[D(i,j)\leq D'(i,j).\]
We have that
\[ c D(i,j) \leq c D'(i,j)\leq  C,\] and
\[ c\leq  D(i,j) \leq \frac{C}{c},\quad \mbox{ for } 1<i<l, l<j\leq n. \]
We need at least $D(2,l+1)$ $m$-gons to separate $E_1$ and $E_i$ for $2<i<n$, $i\not=l$. We can apply Menger's theorem again. With the same process as above, we obtain that
\[c \leq D(1,i) \leq \frac{C}{c},\quad \mbox{ for } 2<i<n, i\not=l. \]
Similarly, we obtain that for any opposite sides $e_i$ and $e_j$, $1\leq i<j\leq n$,
\[  c \leq D(i,j)\leq \frac{C}{c}.\]
\end{proof}


Let $f\:\S^2\ra \S^2$ be an expanding Thurston map. Let $e_1,e_2,\ldots,e_m$ be the $0$-edges (in cyclic order) from the cell decomposition of $\S^2$ of $(f,\mathcal C)$. We call the two $0$-tiles $A$ and $A'$ respectively. Let $\{\overline e_{2i-1}\}_{i=1}^m$ denote the edges of $A$, and let $\{\overline e_{2i}\}_{i=1}^m$ denote the edges of $A'$, such that $\overline e_{2i-1}$ and $\overline e_{2i}$ correspond to $e_i$ on $\S^2$ for all $1\leq i\leq m$. Let $E_{2i-1}^n\subseteq V^n$ be the set of $n$-tiles in $A$ intersecting with edge $\overline e_{2i-1}$  for $i=1,\ldots,m$, and let ${E_{2i}^n}\subseteq V^n$ be the set of $n$-tiles in $A'$ intersecting with edge $\overline e_{2i}$ for $i=1,\ldots,m$. Let $A(k)$ denote the $(2m-2)$-gon obtained by identifying edges $\overline e_{2k-1}$ of $A$ and $\overline e_{2k}$ of $A'$, for $1\leq k\leq m$. See the following picture for $A(2)$.

\begin{lem} \label{numberoftiles}
Assume that $\#\post(f)\geq 4$. If we assume $(*)$ and use the notation above, then there exists a constant $c=c(f)>0$ that only depends on the map $f$, such that, for any two non-adjacent edges of $A(k)$ for $1\leq k\leq m$, the minimum number of $n$-tiles needed to join them lies in the interval $[c\Lambda^n, \frac2{c}\Lambda^n]$.
\end{lem}

\begin{proof}
Let $P_n$ be an $n$-edge chain consisting of $E_n$ $n$-edges joining the opposite sides of the Jordan curve $\mathcal C$. Without loss of generality, we may assume that $P_n$ joining non-adjacent edges $e_1$ and $e_l$, for some $2<l<m$. There are two cases: either $|P_n|$ is a subset of one of the $0$-tiles, or $|P_n|$ intersects with the interior of both $0$-tiles, and $e_1$ and $e_l$ share an adjacent edge, which is the only edge intersecting with $|P_n|$ besides $e_1$ and $e_l$. Indeed, suppose that $|P_n|$ is not a subset of one of the $0$-tiles. If $|P_n|$ intersects with more than one adjacent edge of $e_1$ and $e_l$ or with a non-adjacent edge of $e_1$ or $e_l$, then we have a strictly shorter $n$-edge chain joining the opposite sides of the Jordan curve $\mathcal{C}$, which is a contradiction to the definition of $D_n$.

\begin{center}
\mbox{ \scalebox{0.5}{\includegraphics{A2.eps}}}
\end{center}

For the first case, we may assume that the $n$-tile chain $P_n$ has $D_n$ $n$-tiles in $A$ joining $\overline e_1$ and $\overline e_{2l-1}$, for $2<l<m$. For the second case, without loss of generality, we may assume that $l=3$, so the $n$-tile chain $P_n$ has $D_n$ $n$-tiles joining $\overline e_1$ and $\overline e_6$, crossing $\overline e_3$ identified with $\overline e_4$. We may consider the $(2m-2)$-gon $A(2)$ for both cases. Let $D(i,j)_n^k$ be the minimum number of $n$-tiles that we need to join any opposite edges $\overline e_i$ and $\overline e_j$ of $A(k)$, $1\leq k\leq m$. By Lemma \ref{tilenumber} and Lemma \ref{cormenger}, we obtain that the minimum number $D(i,j)_n^2$ of $n$-tiles we need to join any non-adjacent edges $\overline e_i$ and $\overline e_j$ of $A(2)$ satisfies
\[c\Lambda^n\leq D(i,j)_n^2\leq \frac2{c}\Lambda^n,\]where $1\leq i,j\leq 2m$ and $i,j\not=3,4$.
If we consider the $(2m-2)$-gon $A(k)$ and let $k$ vary from $1$ to $m$, then by Lemma \ref{tilenumber} and Lemma \ref{cormenger}, we have that the minimum number $D(i,j)_n^k$ of $n$-tiles we need to join any non-adjacent edges of $A(k)$ satisfies
\[c\Lambda^n\leq D(i,j)_n^k \leq \frac2{c}\Lambda^n,\]where $1\leq i,j\leq 2m$, and $i,j\not=2k-1,2k$.
\end{proof}

Assume that $\post(f)=3$. We continue with the notation above Lemma \ref{numberoftiles}. So $A(k)$ denotes the $4$-gon constructed by identifying edges $\overline e_{2k-1}$ of $A$ and $f_{2k}$ of $A'$, for $k\in\{1,2,3\}$. Let $A(k,l)$ be the $5$-gon obtained by gluing one more $A$ to $A(k)$ along $\overline e_{2l-1}$ of $A$ and $\overline e_{2l}$ of $A(k)$, for $l\in \{1,2,3\}\setminus\{k\}$. Let $A'(k,l)$ be the $5$-gon obtained by gluing one more $A'$ to $A(k)$ along $\overline e_{2l}$ of $A'$ and $\overline e_{2l-1}$ of $A(k)$, for $l\in \{1,2,3\}\setminus\{k\}$. Relabel the edges of $A(k,l)$ (or $A'(k,l)$) as $\tilde e_1^{kl},\ldots, \tilde e_5^{kl}$. By the same proof as in Lemma \ref{numberoftiles}, we get the following:

\begin{lem} \label{numberoftiles2}
Assume that $\post(f)=3$. If we assume $(*)$ and use the notation above, then for any two non-adjacent edges of $A(k,l)$ (or $A'(k,l)$) for $1\leq k,l\leq m$, the minimum number of $n$-tiles needed to join them is in the interval $[c\Lambda^n, \frac4{c}\Lambda^n]$.
\end{lem}

The following two lemmas are intuitively clear (see the picture below). As their proofs can be derived from standard facts from plane topology, we leave the proofs of the lemmas to the reader.

\begin{lem} \label{pathcon}
Let $X$ be a Jordan domain with four distinct points marked on the boundary of $X$, dividing the boundary into four arcs, labeled $e_1,e_2,e_3,e_4$ in cyclic order. If $P, P'\subset X$ are paths joining $e_1$ and $e_3$, and $e_2$ and $e_4$ respectively, then $|P|\cap |P'|\not=\emptyset$.
\end{lem}

\begin{center}
\mbox{ \scalebox{0.4}{\includegraphics{oppositesides1.eps}}}
\end{center}

Below, for a set $S$, int$(S)$ denotes the interior of $S$, and $\partial S$ denotes the boundary of $S$.

Let $X$ be an $m$-gon on the plane, and let $e_1,\ldots, e_m, e_{m+1}=e_1$ be its edges in cyclic order, and the set of vertices $\{v_i\}=e_i\cap e_{i+1}$. Let $a_i,b_i$ be two rays going out from each vertex $v_i$ of $X$ to the plane, with $b_i$ on the counterclockwise direction of $a_i$ (see the picture below). Let $p_i$ be a path joining $b_{i-1}$ and $a_{i+1}$ intersecting with $a_i, b_i$ in $\overline{b_ie_ib_{i-1}}\cup \overline{a_{i+1}e_{i+1}b_{i}}$, where $\overline{a_ie_ib_{i-1}}$ denote the region in the complement of int$(X)$ whose boundary is consist of $e_i$ and (part of) $a_i\cup b_{i-1}$ (similarly one may define $\overline{b_ie_ib_{i-1}}$). Let $q_i$ be a path joining $p_i$ and $p_{i+1}$ in $\overline{a_{i+1}e_{i+1}b_{i}}$. Let $H$ be a loop starting with a point in $p_1\cap q_1$, going along $q_1$ until it hits a point in $p_2$, going along $p_2$ until it hits a point in $q_2$, going along $p_3$, until it hits a point in $q_3$, and keep on going like this until it goes back to $p_1$ and the original point where it started.

\begin{center}
\mbox{ \scalebox{0.55}{\includegraphics{pathloop.eps}}}
\end{center}

\begin{lem} \label{pathloop}
If we use the notation as above, then $H$ has winding number $1$ around any point in int$(X)$.
\end{lem}

A set $H$ \emph{separates} $a$ from $b$ if for any connected set $K$ such that
\[K\cap a\not=\emptyset \mbox{ and }K\cap b\not=\emptyset,\]
then $ K\cap H \not=\emptyset$.

\begin{lem} \label{tileloop}
Let $f\:\S^2\ra\S^2$ be a map satisfying $(*)$. Given any $h$-edge $a$, $h\geq 1$, for for any $n>h$, there exists an $n$-edge loop $H(a)\subseteq\S^2$ such that $H(a)$ separates $a$ from all $h$-edges disjoint from $a$, and intersects with all $h$-edges non-disjoint from $a$. In addition,
\[\length_w(H(a))\leq C\length_w(a)=Cw(a),\] for some constant $C>0$ which only depends on $f$.
\end{lem}

\begin{proof}
Let $e_1,e_2,\ldots,e_m,e_{m+1}=e_1$ (in cyclic order) be the $0$-edges from the cell decomposition of $\S^2$ under $(f,\mathcal C)$. If $Y$ is an $h$-tile, then let $e_i(Y)$ be the edge of $Y$ whose image under $f^h$ is $e_i$, for $i=1,\ldots,m$.

Fix an $h$-edge $e$, and without loss of generality, let $e=e_1(X_1)=e_1(X_1')$ for some $h$-tiles $X_1, X_1'$. Let $v,v'$ be the $h$-vertices of $e$.
Let \[X_1,X_2,\ldots, X_{d-1}, X_d=X_1'\] be all the $h$-tiles containing the vertex $v$ in counter-clockwise order.
In the following, we will construct an $n$-edge chain joining $e_m(X_1)$ and $e_m(X_1')$ intersecting all $h$-edges containing $v$ except $e$.

First, we construct edge chains joining two or three adjacent tiles. There are two cases:

\begin{description}
\item[$\#$$\post(f)\geq 4$]
By Lemma \ref{numberoftiles}, in $X_1\cup X_2$, we have an $n$-edge chain $P_1$ with at most $\frac2{c}\Lambda^{n-h}$ $n$-edges joining the edge $e_m(X_1)$ and $e_m(X_2)$. If $X_2=X'$, then we are done.
If not, by Lemma \ref{numberoftiles} we have an $n$-edge chain $P_2$ with at most $\frac2{c}\Lambda^{n-h}$ $n$-edges joining edges $e_3(X_2)$ and $e_3(X_3)$.
Since a subchain of $P_1$ joins $e_2(X_2)$ to $e_m(X_2)$ and a subchain of $P_2$ joins $e_3(X_2)$ to $e_1(X_2)$, $|P_1|\cap |P_2|\not=\emptyset$ by Lemma \ref{pathcon}.
Again by Lemma \ref{numberoftiles}, we have an $n$-edge chain $P_3$ with at most $\frac2{c}\Lambda^{n-h}$ $n$-edges joining the edge $e_m(X_3)$ and $e_m(X_4)$. Then $|P_2|\cap |P_3|\not=\emptyset$ for the same reason as $|P_1|\cap |P_2|\not=\emptyset$. If $X_4=X'$, then we are done. If not, by the same process, we obtain a finite sequence of $n$-edge chains $P_1,P_2, \ldots,P_{d-1}$ with $|P_i|\cap|P_{i+1}|\not=\emptyset$ for $1\leq i\leq 2d_1-3$. See the picture below.

\item[$\#$$\post(f)=3$]
By Lemma \ref{numberoftiles2}, we have an $n$-tile chain $P_1^1$ with at most $\frac4{c}\Lambda^{n-h}$ $n$-tiles joining the edge $e_3(X_1^1)$ and $e_3(X_1^3)$. Similarly, we have an $n$-tile chain $P_1^2$ with at most $\frac4{c}\Lambda^{n-h}$ $n$-tiles joining edges $e_3(X_1^2)$ and $e_3(X_1^4)$.  The $n$-tile chains $P_1^1$ and $P_1^2$ intersect with each other, since a subchian of $P_1^1$ joins $e_2(X_1^1)$ to $e_3(X_1^2)$ and a subchain of $P_1^2$ joins $e_3(X_1^2)$ to $e_2(X_1^3)$. By the same process, we obtain a finite sequence of $n$-tile chains $P_1^1,P_1^2, \ldots,P_1^{2d_1-2}$ with $|P_1^i|\cap|P_1^{i+1}|\not=\emptyset$ for $1\leq i\leq 2d_1-3$. In addition, we can require that each tile chain does not self-intersect.
\end{description}

We can construct a simple $n$-edge chain $P$ as follows: starting from the first edge in $P_1$, it goes along $P_1$ until it gets to the first $n$-vertex in $|P_1|\cap |P_2|$, then goes along $P_2$ until it gets to the first $n$-vertex in $|P_2|\cap |P_3|$, and keeps on going through all $P_i$ until it gets to $P_{d-1}$. The $n$-edge chain $P$ joins $e_m(X_1)$ and $e_m(X_1')$. In addition, the $n$-edge chain $P$ intersects with all $n$-edges containing $v$ except $e$.

Similarly, we obtain an $n$-edge chains $P'$ joining $e_2(X_1')$ and $e_2(X_1)$, intersecting with all $n$-edges containing $v'$ except $e$.

By Lemma \ref{numberoftiles}, there exists $n$-edge chain $Q$ in $X_1$ with at most $\frac2{c}\Lambda^{n-h}$ $n$-edges joining $e$ and $e_3(X_1)$. By Lemma \ref{pathcon}, chain $Q$ has to intersect with $P$ and $P'$ in $X_1$. Similarly, there exists $Q'$ in $X_1'$ with at most $\frac2{c}\Lambda^{n-h}$ $n$-edges intersecting with $P$ and $P'$.

The number of tiles in $|P|\cup |Q|$ is bounded by $d \frac4{c}\Lambda^{n-h}$.  By Lemma \ref{noperiodic}, we have that \[d,d'< N,\]where $N$ is defined in Lemma \ref{noperiodic} and is independent of $X$. The total number of $n$-edges in $H(X)$ is bounded by
\[ \frac{8N}{c}\Lambda^{n-h},\] and the length of $H(X)$ satisfies \[\length_w(H(X))\leq \frac{8N}{c}\Lambda^{n-h}\Lambda^{-n}=\frac{8N}{c} \Lambda^{-h}=Cw(X)=C\length_w(X), \]where constant $C>0$ only depends on $f$.
\end{proof}

\begin{rm}
Notice that the $n$-tile loop $H(X)$ that we constructed in the proof above intersects with any $h$-edge which has non-empty intersection with $X$ but is not an edge of $X$.
\end{rm}

\begin{lem} \label{0tile}
Let $d_0$ be the infimum of the length of edge chains joining opposite sides of $\mathcal C$ under the function $d$.
If we assume $(*)$, then $d_0>0$.
\end{lem}

\begin{proof}
Let $Q$ be an edge chain joining opposite sides of $\mathcal C$. Without loss of generality, we may assume that any non-trivial subchain of $Q$ does not join opposite sides of $\mathcal C$, and neither the starting edge nor the ending edge in $Q$ is a subset of $\mathcal C$. Let $n$ be the largest level of tiles in $Q$.

For each edge $a$ in $Q$ with its level is smaller than $n$, we replace $a$ with an $n$-edge chain $H(a)$ surrounding $a$ as defined in \ref{tileloop}. We replace one edge at a time, and we claim that the new edge chain still joins the opposite sides of the $0$-tile at each step. For the purpose of the proof of the claim, let the level of edge $a$ be $h<n$.

Let $b,b'\subset \mathcal C$ be $h$-edges joined by $Q$ and lie in the opposite sides of $\mathcal C$. By symmetry, we only need to consider one of them, say $b$. So $h$-edges $a$ and $b$ either are disjoint or not. If they are disjoint, then the subchain of $Q$ joining $a$ and $b$ must intersect with $H(a)$. If they are not disjoint, then $H(a)$ must intersect with $b$, which implies $H(a)$ intersects with $\mathcal C$. Hence, the set $|Q|\cup |H(a)|\setminus a$ joins the opposite sides of $\mathcal C$. In addition, we can order the edges in $Q\cup H(a)\setminus a$ and get a simple edge chain joining the opposite sides of $\mathcal C$.

Replacing all the edges in $Q$ with $n$-edge chains, we get an $n$-edge chain $Q'$, which satisfies
\begin{eqnarray*}
    \length _w(Q') &=& \sum_{X\in Q}\length _w(H(X)) \\
    & \leq & C\sum_{X\in Q}w(X)\\
     &=& C\length_w(Q)
\end{eqnarray*}
and $Q'$ only contains edges of level $n$, and $C$ is a constant only depends on $f$ as in Lemma \ref{tileloop}. Hence, the infimum $d_0$ of the lengths of edge chains joining opposite sides of $\mathcal C$ satisfies
\begin{eqnarray*}
d_0 &=& \inf\{ \length_w(Q): \mbox{all edge chain $Q$ joining opposite sides of $\mathcal C$}\} \\
    &\geq& \inf_n\{ \frac1{C}\length_w(Q_n): \mbox{all $n$-edge chain $Q_n$ joining opposite sides of $\mathcal C$}\} \\
    &\geq & \frac1{C}\inf_n\{ E_{n}\Lambda^{-n}\}\\
    &\geq & c/C
\end{eqnarray*}

\excise{
----------------------------------
Let $\{Q_i\}$ be a length decreasing sequence of tile chains converging to $d_0$. Without loss of generality, we may assume that any non-trivial subchain of $Q_i$ does not join opposite sides of $\mathcal C$. Let $k_i$ and $K_i$  be the smallest and the largest level of tiles in $Q_i$ respectively. If $k_i$ is greater than $k$ for all $i$, then $d_0\geq \Lambda^{-k}>0$, and the lemma follows.

Now assume that $k_i$ goes to infinity as $i$ goes to infinity. For each tile $X$ in $Q_i$, we replace $X$ with an $n_i$-tile chain $H(X)$ surrounding $X$ as defined in \ref{tileloop}, where $n_i$ is large enough such that $D_{n_i-K_i}\geq c\Lambda^{n_i-K_i}$. We replace one tile at a time, and we claim that the new tile chain still joins the opposite sides of the $0$-tile at each step. For the purpose of the proof of the claim, let $X_l$ be the left-adjacent tile of $X$ in $Q_i$.

If the level $t$ of $X$ is greater than the level of $X_l$, i.e. $t=\ell(X)> \ell(X_l)=t_l$, then there exists a $t$-tile $Y$ that intersects $X_l$ and does not interest $X$. Indeed, let $X'$ be the $t_l$-tile containing $X$, so there exists at least one edge of $X_l$ not sharing with $X'$. Let $v_1$ and $v_2$ be the vertices of this edge. There exist $t$-tiles $Y_1$ and $Y_2$ such that $Y_j\cap X'\subseteq Y_j\cap X_l=\{v_j\}$, $j=1,2$. Since $t>t_l$, $X$ can only intersect at most one of $v_1$ and $v_2$. So we can let $Y$ be $Y_1$ if $X$ does not intersect with $v_1$, or $Y_2$ otherwise. Hence, the $n_i$-tile loop $H(X)$ has to intersect $X_l$ since $H(X)$ separates $X$ and $Y$.

If $t$ is smaller than or equal to the level of $X_l$, then take the subchain $Q_i(X_l)$ of $Q_i$ ending with $X_l$ and containing all the tiles in $Q_i$ having level greater than $t$. If there exists a tile in $Q_i(X_l)$ lying in an $h$-tile that is disjoint from $X$, then $H(X)$ has to intersect $Q_i(X_l)$. Now suppose that every tile in $Q_i(X_l)$ lies in an $h$-tile that intersects $X$. If there exists a left tile $Y$ of $Q_i(X_l)$ in $Q_i$, then $Y$ contains a $t$-tile disjoint from $X$. Otherwise, if $Y$ interests $X$, then there exists a non-trivial subchain of $Q_i$ joining opposite sides of $\mathcal C$, which is a contradiction to our assumption. If there exists no tile on the left of $Q_i(X_l)$, then $H(X)$ either intersects $Q_i(X)$ or one side of the Jordan curve. In the latter case, we may discard all the tiles in $Q_i(X)$.

The right-adjacent tile of $X$ can be dealt with similarly. At each step, the new tile chain still joins the opposite sides of the $0$-tile.

Replacing all the tiles in $Q_i$ with $n_i$-tile chains circling around them and discarding some redundant tiles, we call the final tile chain $Q_i'$. The sequence of tile chains $\{Q_i'\}$ satisfies
\begin{eqnarray*}
    \length_w(Q_i') &=& \sum_{X\in Q_i}\length_w(H(X)) \\
    & \leq & C\sum_{X\in Q_i}w(X)\\
     &=& C\length_w(Q_i)
\end{eqnarray*}
and $Q_i'$ only contains tiles of level $n_i$. Hence, the infimum $d_0$ of the lengths of tile chains joining opposite sides of $\mathcal C$ satisfies
\begin{eqnarray*}
d_0 &=& \lim_{n\ra\infty} \length_w(Q_i) \\
    &=& \lim_{n\ra\infty} \frac1C\,\length_w(Q_i') \\
    &\geq & \inf\lim_{i\ra\infty} D_{n_i}\Lambda^{-n_i}\\
    &\geq & c/C
\end{eqnarray*}
-----------------
}
\end{proof}

\begin{pro} \label{existenceofvisualmetric}
Let $f\:\S^2\ra \S^2$ be an expanding Thurston map with no periodic critical points, and let $\Lambda=(\deg f)^{1/2}$. Assume there exists $c>0$, such that $D_n=D_n(f,\mathcal C)\geq c\,(\deg f)^{n/2}$ for all $n>0$, where $\mathcal C$ is a Jordan curve containing $\post(f)$. Then there exists a visual metric with $\Lambda=(\deg f)^{1/2}$ as the expansion factor.
\end{pro}

See Definition \ref{visual}, for the definition of a visual metric.

\begin{proof}
By Theorem \ref{invariantJordancurve}, for some $n>0$, there exists a Jordan curve $\mathcal C$ with $\mathcal{C}\supset \post(f)$, that is invariant under $f^n$. Proposition 8.8 (v) in \cite{BMExpanding} states that a metric is a visual metric for $f^n$ if and only if it is a visual metric for $f$. Hence, we may assume that there exists a Jordan curve $\mathcal C$ that is invariant under $f$.

Let the weight of a $k$-tile $X^k$ be $w(X^k)=\Lambda^{-k}$. We will show that the distance function  \[d(x,y):=\inf_{P} \mbox{length}_w(P),\]
where the infimum is taken over all tile chains $P$ joining $x$ and $y$, is a visual metric with expansion factor $\Lambda$. For any $x,y\in \S^2$ with $x\not=y$, let $m=m_{f,\mathcal C}(x,y)$. Recall that $m_{f,\mathcal C}(x,y)$ is the maximum level $m$, such that there exist $m$-tiles $A, B$, with $x\in A$, $y\in B$ and $A\cap B\not=\emptyset$. Since $A,B$ is a tile chain joining $x$ and $y$, we have that
\[d(x,y)\leq w(X)+w(Y)=2\Lambda^{-m}.\]

It remains to show that
\[d(x, y)\geq C\Lambda^{-m},\]
for some constant $C>0$ that only depends on the map $f$. Fix $(m+1)$-tiles $X$ and $Y$ with $x\in X, y\in Y$. By the definition of $m$, we know that $X$ and $Y$ are disjoint. For any tile chain $Q$ connecting $x$ and $y$, the tile chain $f^{m+1}(Q)$ joins opposite sides of $\mathcal C$ by Lemma \ref{disjointcell}. Let $P$ be a tile chain connecting $x$ and $y$ with
\[d(x,y)\leq \length_w(P)\leq d(x,y)+\epsilon,\]
for some $\epsilon>0$. Let $P'$ be the subchain of $P$ connecting an edge of $X$ and an edge of $Y$. Without loss of generality, we may assume that $\epsilon$ is so small that $d(x,y)\geq \length_w(P')$. Hence, it is enough to show that
\begin{equation}
\length_w(P')\geq C'\Lambda^{-m},
\end{equation}
for some constant $C'>0$. Since \begin{equation}
\length_w(P')\geq \length_w(f^{m+1}(P'))\Lambda^{-(m+1)},
\end{equation}it is enough to show
 \begin{equation} \label{goal1}
\length_w(f^{m+1}(P'))\geq C'',
\end{equation}for some $C''>0$.
Since $f^{m+1}(P')$ joins opposite sides of $\mathcal C$, inequality \eqref{goal1} holds if the infimum of the length of tile chains joining opposite sides of $\mathcal C$ is positive. This is proved in Lemma \ref{0tile}.
\end{proof}
}

\section{Existence of the Visual Metric} \label{existence}
\noindent
In this section, we prove that there exists a visual metric on $\S^2$ with expansion factor equal to $\deg(f)^{1/2}$ under the three conditions in Theorem \ref{main}. This will imply that the expanding Thurston map $f$ is topologically conjugate to a Latt\`es map.

We refer to the following assumptions as $(*)$:
\[(*)\quad \begin{array}{l} \label{*}
  \hbox{The map $f \:\S^2\ra\S^2$ is an expanding Thurston map with no} \\
  \hbox{periodic critical points, and $\mathcal C$ is a Jordan curve in $\S^2$ that }\\
  \hbox{is invariant under $f$ and satisfies $\post(f)\subset \mathcal C$. }\\
\end{array}\]
Notice that the cell decompositions $\D^n(f,\mathcal C)$ of $\S^2$ induced by a Jordan curve as in $(*)$ are compatible with one another in the sense that $\D^{n+1}(f,\mathcal C)$ is a subdivision of $\D^n(f, \mathcal C)$.

Let $\lambda_0:=(\deg f)^{1/2}$. We refer to the following assumptions as $(**)$:
\[(**)\quad \begin{array}{l} \label{**}
  \hbox{The map $f \:\S^2\ra\S^2$ is an expanding Thurston map with no} \\
  \hbox{periodic critical points, and there exists a constant $c>0$ such}\\
  \hbox{that $D_n=D_n(f,\mathcal C)\geq c\lambda_0^{n}$ for all $n>0$, where $\mathcal C$ is a Jordan }\\
  \hbox{curve in $\S^2$ that is invariant under $f$ and satisfies $\post(f)\subset \mathcal C$. }\\
\end{array}\]

First, let us review some definitions (see the proof of Theorem 17.3 in \cite{BMExpanding} for more details). Let $f$ be an expanding Thurston map. By Section \ref{expanding}, we have a sequence of cell decompositions of the underlying space $\S^2$ by tiles. We define a \emph{tile chain} $P$ to be a finite sequence of tiles
\[X_1,\ldots, X_N\]
such that $X_j\cap X_{j+1}\not=\emptyset$ for $j=1,\ldots,N-1$. We also write
\[P=X_1X_2\ldots X_N,\]
and we use $|P|$ to denote the underlying set $\bigcup_{i=1}^N X_i$ with the subspace topology. In addition, if $X_n$ intersects with $X_1$, then we call the tile chain $P$ a \emph{tile loop}. For $A, B\subseteq \S^2$, we say that the tile chain $P$ \emph{joins} the sets $A$ and $B$ if
\[A\cap X_1\not=\emptyset \mbox{ and } B\cap X_N\not=\emptyset.\]
We say that the tile chain $P$ \emph{joins} the points $x$ and $y$ if $P$ joins $\{x\}$ and $\{y\}$. A \emph{subchain} of $P=X_1X_2\ldots X_N$ is a tile chain of the form
\[X_{j_1}\ldots X_{j_s}, \mbox{ where }1 \leq j_1 <\ldots < j_s \leq N.\]
We call a tile chain $P=X_1X_2\ldots X_N$ \emph{simple} if there is no subchain of $P$ that joins $X_1$ and $X_N$. We call a tile chain $P=X_1X_2\ldots X_N$ an \emph{$n$-tile chain} if all the tiles $X_i$ are $n$-tiles $1\leq i\leq N$. An $n$-tile chain $P=X_1X_2\ldots X_N$ is called an \emph{$e$-chain} if there exists an $n$-edge $e_i$ with $e_i\subseteq X_i\cap X_{i+1}$ for $i = 1,\ldots, N$. The $e$-chain \emph{joins} the tiles $X$ and $Y$ if $X_1 = X$ and $X_N = Y$. A set $M$ of $n$-tiles is \emph{$e$-connected} if every two tiles in $M$ can be joined by an $e$-chain consisting of $n$-tiles contained in
$M$.

The following lemma is from \cite[Lemma 14.4]{BMExpanding}.
\begin{lem} \label{econncted}
Let $\gamma\subset \S^2$ be a path in $\S^2$ defined on a closed
interval $J\subset \R$ and $M = M(\gamma)$ be the set of tiles having nonempty
intersection with $\gamma$. Then $M$ is e-connected.
\end{lem}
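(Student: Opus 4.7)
The plan is to argue by contradiction, leaning on the connectedness of the parameter interval $J$, with the main technical input being the cyclic edge-adjacency of tiles around an $n$-vertex supplied by Lemma \ref{flower}.

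First I would note that since the image $\gamma(J)$ is compact and each $n$-tile has nonempty interior inside the cell decomposition $\D^n(f,\mathcal C)$, the collection $M = M(\gamma)$ is finite. Then assume toward a contradiction that $M$ is not $e$-connected. Partition $M = M_1\sqcup M_2$ into nonempty subclasses such that no $e$-chain with all tiles in $M$ runs from $M_1$ to $M_2$. The sets $K_i := \bigcup_{X\in M_i} X$ are closed in $\S^2$, and $J_i := \gamma^{-1}(K_i)$ are closed subsets of $J$ with $J = J_1\cup J_2$. By connectedness of $J$ there exists $t_0\in J_1\cap J_2$, so $\gamma(t_0)$ lies in some $X\in M_1$ and some $Y\in M_2$, i.e., $\gamma(t_0)\in X\cap Y$.

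Next I would split into two cases. If $X\cap Y$ contains an $n$-edge, then $XY$ is itself an $e$-chain in $M$, directly contradicting the partition. Otherwise $X\cap Y$ reduces to a single $n$-vertex $v = \gamma(t_0)$. Here I would invoke Lemma \ref{flower}: the closure of the flower $W^n(v)$ is the union of all $n$-tiles containing $v$, and these tiles can be listed in cyclic order $F_1,\ldots, F_k$ around $v$ so that consecutive tiles $F_j, F_{j+1}$ share an $n$-edge at $v$. Since $X, Y$ both contain $v$, they occur among the $F_j$'s. By continuity of $\gamma$ at $t_0$, there is $\delta>0$ such that $\gamma\bigl((t_0-\delta, t_0+\delta)\cap J\bigr)$ lies inside $\overline{W^n(v)}$, hence every point $\gamma(t)$ for such $t$ is contained in some tile of the flower at $v$; in particular $M$ contains flower tiles on each side of $t_0$ within $J$.

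The plan for closing the argument is then to iterate the same dichotomy on the flower. Apply the connectedness argument above to the restriction of $\gamma$ to $(t_0-\delta, t_0+\delta)\cap J$, using the subcollection of flower tiles in $M$ together with their edge-adjacency relation: either two edge-adjacent flower tiles of $M$ lie on opposite sides of the $M_1/M_2$ split (giving the required $e$-chain in $M$ and the desired contradiction), or the problem reduces to a strictly smaller cyclic sub-arc of the flower and we iterate. Finiteness of the flower (bounded via Lemma \ref{noperiodic}) forces termination.

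The step I expect to be the main obstacle is precisely this vertex case, and specifically verifying that $M$ really does contain enough flower tiles at $v$ to produce an $e$-chain from $X$ to $Y$ inside $M$. The danger is that $\gamma$ could in principle enter $v$ through $X$ and leave through a non-adjacent flower tile $Y$, visiting no intermediate flower tiles; the resolution must exploit that any such transition forces $\gamma(t_0)=v$ to be a common limit from both sides within $\overline{W^n(v)}$, so that the connectedness dichotomy can be re-run on arbitrarily small neighborhoods of $t_0$ until an edge-adjacent pair inside $M$ is exposed.
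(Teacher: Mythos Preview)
The paper does not give its own proof of this lemma; it simply quotes it from \cite[Lemma 14.4]{BMExpanding}. So there is no in-paper argument to compare against, and I will just evaluate your proposal on its own terms.

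Your overall strategy---partition $M$ into two nonempty $e$-components, pull back to closed sets $J_1,J_2$ covering the connected interval $J$, and analyze a point $t_0\in J_1\cap J_2$---is the right one and works. The case split at $\gamma(t_0)$ is also correct: if $\gamma(t_0)$ lies in the interior of an $n$-edge, the two tiles $X\in M_1$ and $Y\in M_2$ containing it must share that edge, and you are done.

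Where you go astray is the vertex case, which you flag as ``the main obstacle'' and then try to handle with an iterative localization argument. This is unnecessary, and the iteration you sketch is not clearly well-founded. The point you are missing is much simpler: if $\gamma(t_0)=v$ is an $n$-vertex, then $v$ lies in \emph{every} $n$-tile of the flower $\overline{W^n(v)}$, and therefore \emph{every} flower tile already belongs to $M=M(\gamma)$ by definition. Since the flower tiles are cyclically edge-adjacent around $v$ (this is the content of Lemma~\ref{flower} and the surrounding discussion in \cite{BMExpanding}), they form an $e$-chain in $M$ joining $X$ to $Y$, contradicting the partition immediately. There is no danger that $\gamma$ ``visits no intermediate flower tiles'': the single value $\gamma(t_0)=v$ forces all of them into $M$ at once. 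Once you make this observation, the proof is a clean one-paragraph argument with no iteration needed.
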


If $P=X_1\ldots X_N$ is a tile chain, then we define the \emph{length} of the tile chain to be the number of tiles in $P$:
\[\length (P)=N.\]
For $n\geq 1$, we define a function
\begin{eqnarray} \label{defdn}
d_n\:\S^2\times\S^2\ra \R
\end{eqnarray}
as follows: for any $x,y\in\S^2$,
if $x=y$, then $d_n(x,y)=0$;
otherwise,
\begin{eqnarray*}
d_n(x,y)=\min \{\length(P)\}\lambda_0^{-n},
\end{eqnarray*}
where the minimum is taken over all $n$-tile chains $P$ joining $x$ and $y$.
It is clear that $d_n$ is a metric on $\S^2$.

In the following, we will show that for any $x,y\in \S^2$ with $x\not=y$, the ratio
\[d_n(x,y)/ \lambda_0^{-m(x,y)}\]
has a uniform upper and lower bound for all $n>m(x,y)$, where $m(x,y)=m_{f,\mathcal C}(x,y)$ is defined in Definition \ref{defofm} (see Lemma \ref{leq} and Lemma \ref{geq}). Then we will define a distance function
\[d=\limsup_{n\ra\infty}d_n,\]
and we will see that this metric $d$ is a visual metric on $\S^2$ with expansion factor $\Lambda$ (see Proposition \ref{existenceofvisualmetric}).

\begin{de}
For $n\geq 3$, we call a topological space $X$ an \emph{$n$-gon} if $X$ is homeomorphic to the closed unit disk $\overline{\mathbb D}\subset \R^2$ with $n$ points marked on the boundary of $X$. Since the boundary of an $n$-gon is homeomorphic to $\S^1$, there is a natural cyclic order for the $n$ marked points on the boundary. We call these $n$-points \emph{vertices} of the $n$-gon and the parts of the boundary of $X$ joining two consecutive vertices in the cyclic order the \emph{edges} of the $n$-gon.
\end{de}

Now let us review some basic definitions from graph theory (we refer the reader to \cite{DieGraph} for more details). A \emph{graph} $G$ is a pair $(V,E)$ of sets such that the \emph{edge set} $E=E(G)$ is a symmetric subset of the Cartesian product $V \times V$ of the \emph{vertex set} $V=V(G)$. We call a graph $G'=(E',V')$ a \emph{subgraph} of $G=(V,E)$ if
\[E'\subseteq E \mbox{ and } V'\subseteq V,\] written as $G'\subseteq G$.
A \emph{(simple) path} in a graph $G=(V,E)$ is a non-empty subgraph $P=(V',E')$ of the form
\[V'=V'(P)=\{x_0,x_1,\ldots,x_k\}\] and
\[E'=E'(P)=\{x_0x_1,x_1x_2, \ldots, x_{k-1}x_k\},\]
where the $x_i\in V'$ are all distinct, and $uv$ denotes the edge with end points $u,v\in V$. We also write a path as
\[P=x_0x_1\ldots x_k\]
and call $P$ a path from $x_0$ to $x_k$. Given sets $A,B$ of vertices in $G$, we call $P=x_0x_1\ldots x_k$ an \emph{$A$-$B$ path} if $x_0\in A $ and $x_k\in B$.
Given a graph $G=(V, E)$, if $A, B, X\subseteq V$ are such that $X$ is disjoint from $A$ and $B$, and every $A$-$B$ path in $G$ contains a vertex from $X$, we say that $X$ \emph{separates} the sets $A$ and $B$ in $G$. We call $X$ a \emph{separating set} for $A$ and $B$ in the graph $G$.  We will use the following theorem (see Theorem 3.3.1 in \cite{DieGraph}). In general,
given a topological space $T$, let $A,B\subset T$ such that
\[A\cap B=\emptyset.\]
We say that a set $U\subset T$ \emph{separates} $A$ and $B$ if for any path $\gamma\subseteq T$ joining $A$ and $B$,
\[\gamma \cap U\not=\emptyset.\] We call $U$ a \emph{separating set} of $A$ and $B$ in $T$. For $x,y\in T$, we call $U$ a \emph{separating set} of $x$ and $y$ in $T$ if $U$ separates $\{x\}$ and $\{y\}$ in $T$.

\begin{thm}[Menger's theorem] \label{mengers}
Let $G=(V,E)$ be a finite graph and $A, B\subseteq V$. Then the minimal cardinality of a set separating $A$ and $B$ in $G$ is equal to the maximal number of pairwise disjoint $A$-$B$ paths in $G$.
\end{thm}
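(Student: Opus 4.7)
The statement is the classical Menger's theorem, so my plan is to sketch the standard inductive proof; since the paper cites Diestel, I would simply reproduce the argument found there, adapted to the vertex-version needed here.

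The easy inequality is immediate: if $P_1,\ldots,P_r$ are pairwise disjoint $A$-$B$ paths in $G$ and $X$ separates $A$ from $B$, then each $P_i$ must contain a vertex of $X$ and these vertices are distinct, hence $|X|\geq r$. So the maximum number of disjoint $A$-$B$ paths is at most the minimum cardinality of a separator.

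For the nontrivial direction, let $k$ denote the minimum separator size; I would show by induction on $|E(G)|$ that there exist $k$ pairwise disjoint $A$-$B$ paths. The base case $E(G)=\emptyset$ is handled by noting that trivial paths are exactly single vertices in $A\cap B$, which must coincide with the minimum separator. For the inductive step, pick any edge $e=xy$. The argument splits into two cases depending on whether every minimum separator of $G$ remains a separator of $G-e$. In the first case, apply the inductive hypothesis directly to $G-e$ to obtain $k$ disjoint $A$-$B$ paths, which remain valid in $G$. In the second case, there is a minimum separator $S$ of $G$ that fails to separate $A$ from $B$ in $G-e$, which forces $x,y\in S$ in a suitable sense; one then applies the inductive hypothesis to the two graphs obtained by separating along $S$ (that is, the subgraphs induced by the vertices reachable from $A$ without crossing $S$, together with $S$, and symmetrically for $B$) and glues the resulting families of disjoint paths along $S$ to produce $k$ disjoint $A$-$B$ paths in $G$.

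The main technical point is the gluing step in the second case: one must verify that the two collections of disjoint paths, each of size $k=|S|$, induce a bijection between $A$-side and $B$-side halves through $S$, so that concatenation yields genuinely disjoint paths. This relies on the fact that each of the two smaller graphs has $S$ as a minimum $A$-$S$ (respectively $S$-$B$) separator, which in turn follows from the minimality of $|S|$ in $G$ and the hypothesis $X\cap (A\cup B)=\emptyset$ implicit in the definition of separator used here. Once the gluing is justified, the induction closes and the theorem follows. Since this is exactly the argument in Diestel's book, I would cite \cite{DieGraph} rather than reprove it in detail, which matches the paper's convention of quoting Menger's theorem as a black box.
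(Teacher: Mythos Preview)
Your proposal is correct and aligns with the paper's treatment: the paper does not prove Menger's theorem at all but simply quotes it as Theorem~3.3.1 of \cite{DieGraph} and uses it as a black box. Your sketch of the standard edge-deletion induction from Diestel is accurate but superfluous here, and your concluding remark---that one should just cite \cite{DieGraph} rather than reprove it---is exactly what the paper does.
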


Let $X$ be a set of $m$-tiles, and denote the union by $|X|$. All the vertices and edges of $m$-tiles in $X$ give a cell decomposition of $|X|$. We define a graph $G(X)$ with vertex set being the set of all $m$-tiles in $X$, and with an edge between two vertices if and only if the corresponding $m$-tiles share a common edge. We call $G(X)$ the \emph{dual graph} associated with $X$. Note that a subset of $X$ is $e$-connected if and only if the corresponding vertex set in $G(X)$ is path connected.

\excise{
\begin{pro} \label{other}
Let $f$ be a Thurston map without periodic critical points and $\mathcal C\subset \S^2$ be a Jordan curve that is invariant under $f$ and contains ${\rm post}(f)$. Then there exists a constant $C>0$ such that
\[D_n=D_n(f,\mathcal{C})\leq C\deg(f)^{n/2} \] for all $n\geq 0$.
\end{pro}

\begin{proof}
\excise{
First, assume that $m=\#{\rm post}(f)\geq 4$. Let $e_1,\ldots, e_m$ be the $0$-edges in cyclic order.
Let $P_n$ be an $n$-tile chain consisting of $D_n$ $n$-tiles joining opposite sides of the Jordan curve $\mathcal C$. Without loss of generality, we may assume that $P_n$ joins the non-adjacent edges $e_1$ and $e_l$, for some $2<l<m$. There are two cases: either $|P_n|$ is a subset of one of the $0$-tiles, or $|P_n|$ intersects with the interior of both $0$-tiles, and $e_1$ and $e_l$ share an adjacent edge, which is the only edge intersecting with $|P_n|$ besides $e_1$ and $e_l$.
Indeed, suppose that $|P_n|$ is not a subset of one of the $0$-tiles. If $|P_n|$ intersects with more than one adjacent edges of $e_1$ and $e_l$ or with a non-adjacent edge of $e_1$ or $e_l$, then we have a strictly shorter $n$-tile chain joining opposite sides of the Jordan curve $\mathcal{C}$, which is a contradiction to the definition of $D_n$.

For the first case, the set $|P_n|$ separates $0$-edges $e_2=:a$ and $e_{l+1}=:b$ in the $0$-tile. We denote this $0$-tile $X$.
For the second case, without loss of generality, we assume that $P_n$ is an $n$-tile chain consisting of $D_n$ tiles joining opposite edges $e_1$ and $e_3$ (possibly intersecting with $e_2$). We cut along all the edges except edge $e_2$, where one edge becomes two, and unfold the cut sphere to get a $(2m-2)$-gon, and we denote it $X$. Another way to think of this $(2m-2)$-gon $X$ is by gluing two copies of an $m$-gon along the edge $e_2$. Notice that the set $|P_n|$ separates the two copies of the $0$-edges $e_4$ in $X$, and call them $a$ and $b$ respectively. Let $A$ be the set of all $n$-tiles intersecting with $a$ and disjoint from $P_n$, and $B$ be the set of all $n$-tiles intersecting with $b$ and disjoint from $P_n$. So $P_n$ separates $A$ and $B$ in the dual graph associated with the $n$-tile decomposition.

We want to show that $P_n$ is a minimal separating set between $A$ and $B$.
}

First, assume that $m=\#{\rm post}(f)\geq 4$. Let $e_1,\ldots, e_m$ be the $0$-edges in cyclic order. Fixing a $0$-tile, let $X$ be the union of all $n$-tiles in this $0$-tile, and we denote the dual graph $G(X)$. Let $A$ be the set of all $n$-tiles in $X$ intersecting with $e_1$, and $B$ be the set of all $n$-tiles in $X$ intersecting with $e_3$.

Let $S$ be a minimal separating set between $A$ and $B$ in $G(X)$. By Lemma \ref{Usep}, the set $|S|$ separates $|A|$ and $|B|$ in $|X|$. Consider the subspace topology on $|A|$ and $|B|$, since $e_1\subset \inter(|A|)$ and $e_3\subset \inter(|B|)$, $e_1$ and $e_3$ are both connected and disjoint from $|S|$. So by Lemma \ref{Uconnected}, one of the connected component of $|S|$ separates $e_1$ and $e_3$. Since $S$ is a minimal separating set of $A$ and $B$, the separating set $|S|$ is connected.

Notice that there are two connected components in
\[Q=\partial |X|\setminus \big(\inter(e_1)\cup \inter(e_{3})\big),\]
which gives us two disjoint paths from $e_1$ to $e_3$.
Since $|S|$ intersects with both components of $Q$, the set $|S|$ joins at least two disjoint $0$-edges. Hence, there exist at least $D_n$ $n$-tiles in $S$.

By Menger's theorem, there are at least $D_n$ many disjoint $A$-$B$ paths. Let $N_n$ be the minimum number of tiles in an $A$-$B$ path, and since an $A$-$B$ path is an $n$-tile chain joining opposite sides of the Jordan curve $\mathcal C$, we have $D_n\leq N_n$. We get
\[D_n^2\leq D_nN_n\leq 2(\deg(f))^n,\]so
\[D_n\leq C\deg(f)^{n/2}\]for $C=\sqrt2$.

When $\#{\rm post}(f)=3$, we can cut along any two edges of the $3$-gons, and we unfold it to get a $4$-gon. Let $X$ be the union of all $n$-tiles in this $4$-gon, and pick two non-adjacent edges in this $4$-gon and call them $e_1$ and $e_3$. Now we can apply the same argument as the case $\#{\rm post}(f)=4$ as above.
\end{proof}
}

Given an $l$-vertex $v$,
recall that $W^l(v)$ is the union of the interior of $n$-cells intersecting with $v$, so $W^l(v)$ is connected. For $n>l$, let $\D^n(v)$ be the set of $n$-cells in $W^l(v)$. This gives us a cell decomposition of $W^l(v)$. Let $G^n(v)$ be the dual graph associated with the cell decomposition $\D^n(v)$.

\begin{lem} \label{congraph}
Assume that $(*)$ holds and $n>l\geq 0$. Then with the notation above, the graph $G^n(v)$ is path connected.
\end{lem}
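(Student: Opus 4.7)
The plan is to combine the simple-connectedness of the open flower $W^l(v)$ with Lemma \ref{econncted}. Given two $n$-tiles $X,Y\in\D^n(v)$, I would produce a path in $G^n(v)$ from $X$ to $Y$ by first picking a continuous path $\gamma$ through $W^l(v)$ connecting interior points of $X$ and $Y$, and then taking the $e$-chain of $n$-tiles that $\gamma$ meets.

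For the plan to make sense, I first need $\inter(X),\inter(Y)\subset W^l(v)$. This follows from the compatibility of the cell decompositions under $(*)$: since $\D^n$ refines $\D^l$, any $n$-tile in $\D^n(v)$ is contained in some $l$-tile having $v$ on its boundary, so its interior sits inside the interior of that $l$-tile, which is one of the open pieces making up $W^l(v)$. Choose $x\in\inter(X)$ and $y\in\inter(Y)$. By Lemma \ref{flower}, $W^l(v)$ is simply connected, and being an open subset of $\S^2$ it is therefore path-connected; let $\gamma\subset W^l(v)$ be a path from $x$ to $y$. By Lemma \ref{econncted}, the set $M(\gamma)$ of $n$-tiles meeting $\gamma$ is $e$-connected, and clearly $X,Y\in M(\gamma)$.

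The real content of the argument is the inclusion $M(\gamma)\subset \D^n(v)$. For $T\in M(\gamma)$, pick $p\in T\cap \gamma$; then $p\in W^l(v)$, so $p\in \inter(\sigma)$ for some $l$-cell $\sigma$ meeting $v$. Since $\D^n$ refines $\D^l$, $T$ is contained in some $l$-tile $T'$, and a brief case analysis on $\dim\sigma$ shows $v\in T'$ in every case. If $\sigma$ is an $l$-tile, then disjointness of interiors of $l$-tiles forces $T'=\sigma$, and $v\in\sigma$ by hypothesis. If $\sigma$ is an $l$-edge with endpoint $v$, then $p\in \inter(\sigma)\cap T'$ together with disjointness of interiors of $l$-cells forces $\sigma\subset\partial T'$, so $v\in T'$. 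If $\sigma=\{v\}$, then $v=p\in T\subset T'$. Hence $T\in\D^n(v)$, and the $e$-chain from $X$ to $Y$ supplied by Lemma \ref{econncted} becomes a path in $G^n(v)$, completing the proof.

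I expect the main obstacle to be precisely this last case analysis: although geometrically transparent, it is the only place where the definition of $\D^n(v)$ and the refinement relation between $\D^l$ and $\D^n$ must be handled with care. Everything else is an essentially formal consequence of Lemmas \ref{flower} and \ref{econncted}.
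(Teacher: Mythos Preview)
Your proof is correct and follows essentially the same approach as the paper: both exploit the simple-connectedness of $W^l(v)$ from Lemma~\ref{flower} together with Lemma~\ref{econncted} to produce the required $e$-chains. The paper takes a single path $\gamma\subset W^l(v)$ passing through all $n$-vertices and asserts that $M(\gamma)$ equals the vertex set of $G^n(v)$, whereas you connect an arbitrary pair of tiles and verify $M(\gamma)\subset\D^n(v)$ via a case analysis on the $l$-cell whose interior contains a point of $T\cap\gamma$; your argument makes explicit precisely the containment that the paper's proof leaves to the reader.
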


\begin{proof}
By Lemma \ref{flower}, the $l$-flower $W^l(v)$ is simply connected. There exists a path $\gamma\subset W^l(v)$ containing all $n$-vertices in $W^l(v)$. Let $V = V(\gamma)$ be the set of tiles having nonempty
intersection with $\gamma$. Then $V$ is the vertex set of the graph $G^n(v)$.
Lemma \ref{econncted} states that $V$ is $e$-connected, which implies that $G^n(v)$ is path connected.
\end{proof}

Note that we do no need $f$ to be expanding in Lemma \ref{congraph}.


\begin{lem} \label{Usep}
Assume that $(*)$ holds and $n>0$. Let $X$ be a set of $n$-tiles, and $A, B, S\subset X$. If $S$ separates $A$ and $B$ in the graph $G(X)$, then $|S|$ separates $|A|$ and $|B|$ in $|X|$.
\end{lem}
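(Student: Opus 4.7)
The plan is to argue by contrapositive: suppose $|S|$ does not separate $|A|$ from $|B|$ in $|X|$; then there is a continuous path $\gamma\:[0,1]\to |X|$ with $\gamma(0)\in |A|$, $\gamma(1)\in |B|$, and $\gamma([0,1])\cap |S|=\emptyset$. From $\gamma$ I will extract an $A$-$B$ path in $G(X)$ avoiding $S$, contradicting the hypothesis.

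First I would perturb $\gamma$ (within $|X|$, keeping it disjoint from $|S|$) so that it avoids every $n$-vertex, crosses every $n$-edge only transversely, and satisfies $\gamma(0)\in\inter(T_A)$ for some $T_A\in A$ and $\gamma(1)\in\inter(T_B)$ for some $T_B\in B$. Because $|S|$ is a finite union of closed $n$-tiles, it is closed and disjoint from the compact set $\gamma([0,1])$, so any sufficiently small perturbation remains in $|X|\setminus |S|$. The perturbation is feasible because the set of $n$-vertices is finite and the $n$-edges form a one-dimensional subcomplex inside a two-dimensional ambient region.

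Next I would read off the sequence of tiles visited by the perturbed path. Let $0=t_0<t_1<\cdots<t_N=1$ be the edge-crossing times, and let $T_i$ be the $n$-tile containing $\gamma((t_i,t_{i+1}))$. Each $T_i$ must lie in $X$: the interiors of distinct $n$-tiles are pairwise disjoint, and $\gamma((t_i,t_{i+1}))\subset |X|$ forces $\inter(T_i)$ to be contained in some tile of $X$, which can only be $T_i$ itself. Each $T_i$ must also lie outside $S$, since any tile in $S$ is contained in $|S|$ and $\gamma$ avoids $|S|$. Consecutive tiles $T_i, T_{i+1}$ share the $n$-edge crossed transversely at time $t_{i+1}$, so they are adjacent in $G(X)$. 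Since $T_0=T_A\in A$ and $T_{N-1}=T_B\in B$, the sequence $T_0,T_1,\ldots,T_{N-1}$ is an $A$-$B$ path in $G(X)$ with no vertex in $S$, the desired contradiction.

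The principal technical obstacle will be rigorously justifying the perturbation step. In the intended applications $|X|$ is a topological disk (for instance, a $0$-tile or a union of tiles forming a polygon), which is a $2$-manifold with boundary and admits standard general-position arguments. In greater generality one may instead invoke Lemma~\ref{econncted} directly: applied to $\gamma$, it yields an $e$-connected set $M(\gamma)$ of $n$-tiles, each of which is disjoint from $S$ (by the same argument as above), and one then extracts an $e$-chain from a tile of $A$ containing $\gamma(0)$ to a tile of $B$ containing $\gamma(1)$ that stays within $X\setminus S$ by ruling out excursions to tiles outside $X$ through a boundary-crossing analysis.
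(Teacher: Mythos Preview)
Your proof is correct and its overall architecture---argue by contrapositive, take a path $\gamma\subset|X|\setminus|S|$ from $|A|$ to $|B|$, and convert it into an $A$--$B$ walk in $G(X)$ avoiding $S$---is exactly what the paper does. The difference is only in how the conversion is carried out. The paper skips your perturbation/transversality argument entirely and invokes Lemma~\ref{econncted} directly: the set $M(\gamma)$ of $n$-tiles meeting $\gamma$ is $e$-connected, it is disjoint from $S$ because $\gamma\cap|S|=\emptyset$, and this already furnishes an $e$-chain (hence a path in the dual graph) from a tile of $A$ to a tile of $B$. This is precisely the alternative you sketch in your final paragraph.

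Two remarks. First, your primary route through general position is sound but, as you note, needs care near $\partial|X|$; the paper's route avoids that issue by appealing to the ready-made lemma. Second, you are right to flag that $M(\gamma)$ may a priori contain $n$-tiles not in $X$ (when $\gamma$ grazes $\partial|X|$); the paper writes $M(\gamma)\subset X\setminus S$ without comment. Your transversality argument actually handles this cleanly: once $\gamma$ avoids vertices and crosses edges transversely while remaining in $|X|$, every tile whose interior it enters must lie in $X$, and any edge it crosses must be interior to $|X|$. So on this point your version is the more careful one.
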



\begin{proof}
Assume that the set $|S|$ does not separate $|A|$ and $|B|$ in $|X|$. Then there exists a path $\gamma\subset |X|$ joining $|A|$ and $|B|$ such that
\[\gamma\cap |S|=\emptyset.\]
By Lemma \ref{econncted}, the set $M(\gamma)$ of $n$-tiles intersecting with $\gamma$ is $e$-connected.
In addition, we have that
\[M(\gamma)\cap S=\emptyset\] since $\gamma\cap |S|=\emptyset$.
This means that there exists an $e$-path in $M(\gamma)\subset X\setminus S$ joining $A$ and $B$. This is a contradiction of the definition of the separating set $S$.
Hence, the set $|S|$  separates $|A|$ and $|B|$ in $|X|$.
\end{proof}

Continuing with the notation of Lemma \ref{Usep} above, if in addition, $S$ is a minimal separating set of $A$ and $B$ in the graph $G(X)$, we would like to show that $|S|$ is a connected set. This will follow from Lemma \ref{Uconnected}. In order to prove it, we need some preparations.

The following theorem is from \cite[Page 110]{NewElements}.

\begin{thm}[Janiszewski] 
Let $A$ and $B$ be closed subset of $\S^2$ such that $A\cap B$ is connected. If neither $A$ nor $B$ separates two points $x$ and $y$ in $\S^2$, then $A\cup B$ does not separate $x$ and $y$ either.
\end{thm}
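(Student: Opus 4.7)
The plan is to prove this classical theorem via Borsuk's separation criterion. After identifying $\S^2=\C\cup\{\infty\}$ and (if necessary) applying a M\"obius change of coordinates so that $x,y\in\C$, consider the continuous map
\[ \phi\:\S^2\setminus\{x,y\}\ra \S^1, \qquad \phi(z)=\frac{(z-x)/(z-y)}{|(z-x)/(z-y)|}. \]
Borsuk's criterion asserts that for any closed $K\subseteq \S^2\setminus\{x,y\}$, the set $K$ fails to separate $x$ from $y$ in $\S^2$ if and only if $\phi|_K\:K\ra\S^1$ admits a continuous real-valued lift under the covering $t\mapsto e^{2\pi i t}\:\R\ra \S^1$ (equivalently, $\phi|_K$ is null-homotopic in $\S^1$).

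First I would apply Borsuk's criterion to $A$ and $B$ separately to produce continuous lifts $\alpha\:A\ra \R$ and $\beta\:B\ra \R$ of $\phi|_A$ and $\phi|_B$, respectively. On the overlap $A\cap B$ both $\alpha$ and $\beta$ lift the same map $\phi|_{A\cap B}$, so the difference $\alpha-\beta\:A\cap B\ra \Z$ is continuous and integer-valued. Since $A\cap B$ is connected, this function must equal a single constant $n\in \Z$. Replacing $\beta$ by $\beta+n$, the two lifts then agree on $A\cap B$, and the pasting lemma (applicable since $A$ and $B$ are closed in $\S^2$) combines them into a continuous function $\gamma\:A\cup B\ra \R$ satisfying $\phi|_{A\cup B}=\exp(2\pi i\gamma)$. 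By the reverse direction of Borsuk's criterion, $A\cup B$ does not separate $x$ from $y$.

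The only non-routine ingredient is Borsuk's criterion itself, which I would quote from a standard reference in plane topology (such as the book already cited in the excerpt). The connectedness hypothesis on $A\cap B$ enters in exactly one place, namely the step promoting the locally constant integer-valued function $\alpha-\beta$ to a single integer; this is precisely where the theorem would fail without that hypothesis, since otherwise $\alpha$ and $\beta$ might differ by distinct integers on different components of $A\cap B$ and could not be reconciled by a single global shift. The remaining technicalities (the change of coordinates placing $x,y$ in $\C$, the closedness needed for the pasting lemma, and the compactness of the sets involved) are immediate from the hypotheses.
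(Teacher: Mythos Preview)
Your argument via Borsuk's separation criterion is a standard and correct proof of Janiszewski's theorem; the logic of lifting $\phi|_A$ and $\phi|_B$, using connectedness of $A\cap B$ to force the integer-valued difference $\alpha-\beta$ to be constant, and then gluing with the pasting lemma is exactly the classical route. One small remark: you should note that the case $A\cap B=\emptyset$ is handled trivially (there is nothing to reconcile), since some readers do not regard the empty set as connected.

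As for comparison with the paper: the paper does not actually prove this theorem. It simply quotes the statement from Newman's \emph{Elements of the Topology of Plane Sets of Points} and uses it as a black box to derive Corollary~\ref{Janiszewski} and Lemma~\ref{Uconnected}. So your proposal goes further than the paper does, supplying an argument where the paper only gives a citation. The approach you outline is in fact close to the one found in standard references (including the cited text), so there is no divergence in method to discuss.
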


As a corollary of Janiszewski Theorem, we have the following.
\begin{cor} \label{Janiszewski}
Let $U$ be a closed subset of $\S^2$ with finitely many connected components. For two path-connected regions $X,Y\subset \S^2$ which are disjoint from $U$, if the set $U$ separates $X$ and $Y$, then one of the connected components of $U$ separates $X$ and $Y$.
\end{cor}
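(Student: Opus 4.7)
The plan is to reduce separation of the path-connected regions $X,Y$ to separation of a chosen pair of points, and then peel off the components $U_1,\dots,U_k$ of $U$ one at a time using Janiszewski's theorem.

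First I would establish the following reduction: if $U' \subseteq \S^2$ is any closed set disjoint from $X \cup Y$, then $U'$ separates $X$ and $Y$ if and only if $U'$ separates some (equivalently every) pair $x_0\in X$, $y_0\in Y$. One direction is trivial, since every path from $x_0$ to $y_0$ is in particular a path from $X$ to $Y$. For the other direction, if $\gamma$ is a path from $X$ to $Y$ avoiding $U'$, then by concatenating $\gamma$ with paths inside $X$ and $Y$ (which exist by path-connectedness, and which also avoid $U'$ since $X,Y$ are disjoint from $U'$), one obtains a path from $x_0$ to $y_0$ avoiding $U'$. This reduction applies both to the full set $U$ and to each of its components $U_i$, which are likewise disjoint from $X\cup Y$.

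Next, I fix $x_0\in X$ and $y_0\in Y$ and argue by contradiction. Suppose no $U_i$ separates $X$ from $Y$; by the reduction, no $U_i$ separates $x_0$ from $y_0$. Each component $U_i$ is closed in $\S^2$ (since $U$ has only finitely many components, each is also open in $U$, hence closed in $\S^2$). I would then prove by induction on $j\in\{1,\dots,k\}$ that $V_j:=U_1\cup\cdots\cup U_j$ does not separate $x_0$ from $y_0$. The base case $j=1$ is the assumption. For the inductive step, $V_j$ and $U_{j+1}$ are disjoint closed subsets of $\S^2$, so $V_j\cap U_{j+1}=\emptyset$ is (vacuously) connected; by the inductive hypothesis $V_j$ does not separate $x_0,y_0$, and by assumption neither does $U_{j+1}$, so Janiszewski's theorem yields that $V_{j+1}=V_j\cup U_{j+1}$ does not separate $x_0,y_0$ either. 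Taking $j=k$ shows that $U$ does not separate $x_0$ from $y_0$, contradicting the hypothesis (via the reduction) that $U$ separates $X$ and $Y$.

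The only delicate point is the invocation of Janiszewski's theorem with empty intersection $V_j\cap U_{j+1}$; this is legitimate under the standard convention that the empty set is connected, and it is essential because the $U_i$ are pairwise disjoint. Everything else is a matter of carefully passing between separation of regions and separation of points, which is handled uniformly by the initial reduction.
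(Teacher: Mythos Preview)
Your proof is correct and follows essentially the same approach as the paper: both reduce separation of the path-connected regions $X,Y$ to separation of a fixed pair of points via path concatenation, and both handle the finitely many components by an induction that applies Janiszewski's theorem with empty (hence connected) intersection. The only cosmetic difference is that the paper first finds a component separating the chosen points and then upgrades to separation of $X$ and $Y$, whereas you package the region-to-point reduction as a preliminary lemma and run the induction as a proof by contradiction.
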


\begin{proof}
Fix $x\in X$ and $y\in Y$. By induction on the number of connected components of $U$ and by Janiszewski's Theorem,
there exists a connected component $U'$ of $U$ that separates $x$ and $y$. Consider a path $\gamma$ connecting points
$x' \in X$ and $y' \in Y$. Let $\alpha \subset X$ be a path from $x$ to $x'$, and let $\beta \subset Y$ be a path from $y'$ to $y$. Then the path $\alpha\gamma\beta$ joining $x$ and $y$ intersects $U'$. Hence, the path $\gamma$ intersects $U'$, and $U'$ separates $x'$ and $y'$. We conclude that $U'$ separates $X$ and $Y$.
\excise{
If $X$ and $Y$ contain only one point respectively, then it follows from Janiszewski's Theorem by induction on the number of connected components of $U$.
If $X$ or $Y$ has more than one point, without loss of generality we may assume that $X$ has more than one point.
Assume that none of the connected components of $U$ separates $X$ and $Y$, i.e., there exist $x,x'\in X$ with $x\not=x'$, and $y,y'\in Y$, such that $x$ and $y$ separates by $U_1$ but not $U_2$, and $x'$ and $y'$ separated by $U_2$ but not $U_1$, where $U_i$ for $i=1,2$ is a union of connected components of $U$ and $U_1\cap U_2=\emptyset$.
Let $\alpha \subset X$ be a path from $x'$ to $x$, and let $\beta \subset Y$ be a path from $y$ to $y'$. Let $\gamma\subset \S^2$ be a path from $x$ to $y$.

so $\gamma$ intersects with $U_1$. Then the path $\alpha\gamma\beta$ joining $x'$ and $y'$ also intersects with $U_1$ which is a contradiction.
}
\end{proof}

\begin{lem} \label{Uconnected}
Let $W$ be a simply connected region in $\S^2$.
Let $U$ be a closed subset of the closure $\overline{W}$ of $W$ in $\S^2$ with finitely many connected components. For two path-connected regions $X,Y\subset W$ which are disjoint from $U$, if $U$ separates $X$ and $Y$ in $W$, then there exists a connected component of $U$ separating $X$ and $Y$.
\end{lem}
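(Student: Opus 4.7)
The plan is to reduce to Corollary \ref{Janiszewski} by extending the separating set $U$ to a closed set in all of $\S^2$, using the hypothesis that $W$ is simply connected. Set $V:=\S^2\setminus W$. Because $W$ is a simply connected region in $\S^2$, the complement $V$ is closed and connected (the standard characterization of simply connected domains on the sphere). If $V=\emptyset$ then $W=\S^2$ and the conclusion is exactly Corollary \ref{Janiszewski}, so assume $V\neq\emptyset$. Consider $U\cup V\subseteq \S^2$. It is closed, and it has only finitely many connected components: one component is $V$ together with all components of $U$ that meet $V$ (these are forced into the same component since each is connected and meets $V$), and the remaining components are precisely those components of $U$ that are disjoint from $V$. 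Any path in $\S^2\setminus(U\cup V)$ lies in $W\setminus U$, so the hypothesis that $U$ separates $X$ and $Y$ in $W$ forces $U\cup V$ to separate $X$ and $Y$ in $\S^2$. By Corollary \ref{Janiszewski}, some connected component $C$ of $U\cup V$ separates $X$ and $Y$ in $\S^2$.

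Now I would do a case analysis on whether $C$ meets $V$. If $C\cap V=\emptyset$, then $C\subseteq U$ and maximality forces $C$ to be a connected component of $U$ itself; any path in $W\setminus C$ is a path in $\S^2\setminus C$, so $C$ already separates $X$ and $Y$ in $W$. Otherwise, $C\cap V\neq\emptyset$, and since $V$ is connected we must have $V\subseteq C$, so $C=V\cup U_1\cup\cdots\cup U_k$, where $U_1,\dots,U_k$ are the connected components of $U$ contained in $C$ (each necessarily meeting $V$, since $C$ is connected). Note that $k\geq 1$: indeed, $W$ is path-connected and contains $X\cup Y$, so $V$ alone cannot separate $X$ from $Y$ in $\S^2$.

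The main obstacle is the second case, and the key trick there is an iterated use of Janiszewski's theorem: $V$ serves as a common connected ``hinge'' that allows us to peel off the $U_i$'s one at a time. Proceeding by induction on $k$, I split $C=A\cup B$ with $A:=V\cup U_1$ and $B:=V\cup U_2\cup\cdots\cup U_k$. Both are closed, and $A\cap B=V$ since the $U_i$'s are pairwise disjoint; moreover $V$ is connected. Since $A\cup B=C$ separates $X$ and $Y$ in $\S^2$ (equivalently, any fixed $x\in X$ from any fixed $y\in Y$), Janiszewski's Theorem yields that $A$ or $B$ separates $x$ and $y$; connectedness of $X$ and $Y$ upgrades this to separation of $X$ and $Y$. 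If $A=V\cup U_1$ separates, I stop; otherwise I apply the induction hypothesis to $B$, eventually producing some $V\cup U_i$ that separates $X$ and $Y$ in $\S^2$. To finish, any path $\gamma\subseteq W\setminus U_i$ is contained in $W$, hence disjoint from $V$, and so disjoint from $V\cup U_i$; this contradicts the separation property, proving that the single component $U_i$ of $U$ separates $X$ and $Y$ in $W$.
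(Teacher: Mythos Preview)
Your proof is correct and follows the same strategy as the paper's: reduce to separation in $\S^2$ by adjoining a connected piece of the complement of $W$ (you use $V=\S^2\setminus W$, the paper uses $\partial W$), and then peel off the $U_i$'s one at a time via Janiszewski's theorem with that connected piece serving as the common intersection. Your organization---first applying Corollary~\ref{Janiszewski} to $U\cup V$ and then working inside the resulting component by induction on $k$---is if anything a bit tidier than the paper's three-case split on how many $U_i$ meet $\partial W$, and taking $V$ rather than $\partial W$ makes the connectedness of the hinge immediate.
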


\begin{proof}
Without loss of generality, we may assume that $U=\cup_{i=1}^{I}U_i$ where $U_i$ is a connected components of $U$, and $U_i\cap U_j=\emptyset$ if $i\not=j$. Let $\partial W$ be the boundary of $W$ in $\S^2$. For any path $\gamma\subseteq \S^2$ from $X$ and $Y$, if $\gamma\subset W$, then
\[\gamma\cap U\not=\emptyset,\]
and if $\gamma \not\subset W$, then
\[\gamma \cap \partial W\not=\emptyset.\]
So the set $U\cup \partial W$ separates $X$ and $Y$ in $\S^2$.
\begin{description}
  \item[Case 1] None of the $U_i$ intersect with $\partial W$. Since $\partial W$ does not separate $x$ and $y$ in $\S^2$, Corollary \ref{Janiszewski} implies that one of the $U_i$ separates $X$ and $Y$ in $\S^2$.
  \item[Case 2] All of the $U_i$ intersect with $\partial W$. Let
      \[U_i'=U_i\cup \partial W \mbox{ for } 1\leq i\leq I\]
      Notice that $U_i'\cap U_j'=\partial W$ is connected for any $i\not=j$. We claim that one of the $U_i'$ separates $X$ and $Y$ in $\S^2$. If none of $U_i'$ separates $X$ and $Y$ in $\S^2$, then by Janiszewski's Theorem, the set \[\cup_{i=1}^I U_i'=U\cup \partial W\] does not separate $X$ and $Y$, which is a contradiction.
      Without loss of generality, assume that $U_1'$ separates $X$ and $Y$ in $\S^2$. Then $U_1$ separates $X$ and $Y$ in $W$.
  \item[Case 3] Only some of the $U_i$ intersect with $\partial W$. Without loss of generality, assume that
      \[U_i\cap \partial W=\emptyset \mbox{ for } 1\leq i\leq J<I,\]and
      \[U_i\cap \partial W\not=\emptyset \mbox{ for } J< i\leq I,\]
      Let $U'=\cup_{i=J+1}^{I} U_i\cup \partial W$. By Corollary \ref{Janiszewski}, either one of the $U_i$ for $1\leq i\leq J$ or $U'$ separates $X$ and $Y$ in $\S^2$. If one of the $U_i$ for $1\leq i\leq J$ separates $X$ and $Y$, we are done. If $U'$ separates $X$ and $Y$, then it is Case 2. This implies that one of the $U_i$ for $i\in I$ separates $X$ and $Y$ in $W$.
\end{description}
Hence,  one of the connected components of $U$ separates $X$ and $Y$ in $W$.
\end{proof}

\begin{pro} \label{other}
Let $f$ be a Thurston map without periodic critical points and let $\mathcal C\subset \S^2$ be a Jordan curve that is invariant under $f$ and contains ${\rm post}(f)$. Then there exists a constant $C>0$ such that
\[D_n=D_n(f,\mathcal{C})\leq C\deg(f)^{n/2} \] for all $n\geq 0$.
\end{pro}

\begin{proof}
\excise{
First, assume that $m=\#{\rm post}(f)\geq 4$. Let $e_1,\ldots, e_m$ be the $0$-edges in cyclic order.
Let $P_n$ be an $n$-tile chain consisting of $D_n$ $n$-tiles joining opposite sides of the Jordan curve $\mathcal C$. Without loss of generality, we may assume that $P_n$ joins the non-adjacent edges $e_1$ and $e_l$, for some $2<l<m$. There are two cases: either $|P_n|$ is a subset of one of the $0$-tiles, or $|P_n|$ intersects with the interior of both $0$-tiles, and $e_1$ and $e_l$ share an adjacent edge, which is the only edge intersecting with $|P_n|$ besides $e_1$ and $e_l$.
Indeed, suppose that $|P_n|$ is not a subset of one of the $0$-tiles. If $|P_n|$ intersects with more than one adjacent edges of $e_1$ and $e_l$ or with a non-adjacent edge of $e_1$ or $e_l$, then we have a strictly shorter $n$-tile chain joining opposite sides of the Jordan curve $\mathcal{C}$, which is a contradiction to the definition of $D_n$.

For the first case, the set $|P_n|$ separates $0$-edges $e_2=:a$ and $e_{l+1}=:b$ in the $0$-tile. We denote this $0$-tile $X$.
For the second case, without loss of generality, we assume that $P_n$ is an $n$-tile chain consisting of $D_n$ tiles joining opposite edges $e_1$ and $e_3$ (possibly intersecting with $e_2$). We cut along all the edges except edge $e_2$, where one edge becomes two, and unfold the cut sphere to get a $(2m-2)$-gon, and we denote it $X$. Another way to think of this $(2m-2)$-gon $X$ is by gluing two copies of an $m$-gon along the edge $e_2$. Notice that the set $|P_n|$ separates the two copies of the $0$-edges $e_4$ in $X$, and call them $a$ and $b$ respectively. Let $A$ be the set of all $n$-tiles intersecting with $a$ and disjoint from $P_n$, and $B$ be the set of all $n$-tiles intersecting with $b$ and disjoint from $P_n$. So $P_n$ separates $A$ and $B$ in the dual graph associated with the $n$-tile decomposition.

We want to show that $P_n$ is a minimal separating set between $A$ and $B$.
}

First, assume that $m=\#{\rm post}(f)\geq 4$. Let $e_1,\ldots, e_m$ be the $0$-edges in cyclic order. Fixing a $0$-tile, let $X$ be the union of all $n$-tiles in this $0$-tile, and let $G(X)$ be the dual graph of $X$. Let $A$ be the set of all $n$-tiles in $X$ intersecting with $e_1$, and $B$ be the set of all $n$-tiles in $X$ intersecting with $e_3$.

Let $S$ be a minimal separating set between $A$ and $B$ in $G(X)$. By Lemma \ref{Usep}, the set $|S|$ separates $|A|$ and $|B|$ in $|X|$. Consider the subspace topology on $|A|$ and $|B|$. Since $e_1\subset \inter(|A|)$ and $e_3\subset \inter(|B|)$, $e_1$ and $e_3$ are both connected and disjoint from $|S|$. So
by Lemma \ref{Uconnected}, one of the connected components of $|S|$ separates $e_1$ and $e_3$. Since $S$ is a minimal separating set of $A$ and $B$, the separating set $|S|$ is connected.

Notice that there are two connected components in
\[Q=\partial |X|\setminus \big(\inter(e_1)\cup \inter(e_{3})\big),\]
which gives us two disjoint paths from $e_1$ to $e_3$.
Since $|S|$ intersects with both components of $Q$, the set $|S|$ joins at least two disjoint $0$-edges. Hence, there exist at least $D_n$ $n$-tiles in $S$.

By Menger's theorem, there are at least $D_n$ many disjoint $A$-$B$ paths. Let $N_n$ be the minimum number of tiles in an $A$-$B$ path, and since an $A$-$B$ path is an $n$-tile chain joining opposite sides of the Jordan curve $\mathcal C$, we have $D_n\leq N_n$. We get
\[D_n^2\leq D_nN_n\leq 2(\deg(f))^n,\]so
\[D_n\leq C\deg(f)^{n/2}\]for $C=\sqrt2$.

When $\#{\rm post}(f)=3$, we can cut along any two edges of the $3$-gons, and we unfold it to get a $4$-gon. Let $X$ be the union of all $n$-tiles in this $4$-gon, and pick two non-adjacent edges in this $4$-gon and call them $e_1$ and $e_3$. Now we can apply the same argument as in the case when $\#{\rm post}(f)=4$ above.
\end{proof}

Given an $h$-tile $X^h$, and an $(h-1)$-tile $X^{h-1}$, if $X^h\subset X^{h-1}$, then we call $X^{h-1}$ the \emph{parent} of $X^h$.

\begin{lem} \label{short}
Assume that $(**)$ holds and $n>h> 0$.
Let $X^h, Y^h$ be $h$-tiles and let $X^{h-1}, Y^{h-1}$ be their parents respectively. Assume $X^{h-1}\cap Y^{h-1}\not=\emptyset$. Then there exists an $n$-tile chain with at most $c'\lambda_0^{n-h}$ tiles joining $X^h$ and $Y^h$, where $c'>0$ only depends on $f$.
\end{lem}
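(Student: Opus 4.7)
The plan is to reduce to the case $h=1$ by applying $f^{h-1}$, and then to construct the chain using Proposition~\ref{other} applied inside individual $1$-tiles.

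First, since $f^{h-1}$ restricted to any $(h-1)$-tile is a homeomorphism onto a $0$-tile, set $T_X := f^{h-1}(X^{h-1})$, $T_Y := f^{h-1}(Y^{h-1})$, $X^1 := f^{h-1}(X^h)\subset T_X$, and $Y^1 := f^{h-1}(Y^h)\subset T_Y$. Since $X^{h-1}\cap Y^{h-1}$ is nonempty, it contains an $(h-1)$-cell $\sigma$ whose image $\sigma' := f^{h-1}(\sigma)\subset T_X\cap T_Y$ is a $0$-cell (and $T_X, T_Y$ are either equal, or the two complementary $0$-tiles with $\sigma'\subset\mathcal C$). Any $(n-h+1)$-tile chain in $T_X\cup T_Y$ from $X^1$ to $Y^1$ that transitions between $T_X$ and $T_Y$ only at a tile meeting $\sigma'$ lifts, via the two local inverses of $f^{h-1}$ on $X^{h-1}$ and on $Y^{h-1}$, to an $n$-tile chain of the same length in $X^{h-1}\cup Y^{h-1}$ joining $X^h$ to $Y^h$. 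It therefore suffices to produce an $(n-h+1)$-tile chain of length at most $c'\Lambda_0^{n-h}$ in $T_X\cup T_Y$ with this transition property.

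To build such a chain, I would pick a sequence $Z^1_0 = X^1, Z^1_1,\ldots, Z^1_s = Y^1$ of $1$-tiles in $T_X\cup T_Y$ with consecutive tiles sharing a $1$-cell and with the passage from $T_X$ to $T_Y$ happening at a $1$-cell inside $\sigma'$. The integer $s$ is bounded by the total number of $1$-tiles in $T_X\cup T_Y$, hence by $2\deg(f)$, a constant depending only on $f$. The key step is then to show that for each consecutive pair $Z^1_i, Z^1_{i+1}$ there is an $(n-h+1)$-tile chain in $Z^1_i\cup Z^1_{i+1}$ of length at most $C\Lambda_0^{n-h}$ joining them through their shared $1$-cell. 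Since $f$ maps each $1$-tile $Z$ homeomorphically onto a $0$-tile $f(Z)$ and the $(n-h+1)$-cell decomposition of $Z$ pulls back from the $(n-h)$-cell decomposition of $f(Z)$, applying Proposition~\ref{other} inside $f(Z)$ (extended to joining any two disjoint $0$-cells rather than just opposite $0$-edges) yields an $(n-h)$-tile chain of length $\leq C\Lambda_0^{n-h}$ with prescribed endpoints, which lifts through $(f|_Z)^{-1}$ to the desired chain in $Z$. Concatenating across the $s$ consecutive pairs produces a chain of length at most $s\cdot 2C\Lambda_0^{n-h}\leq c'\Lambda_0^{n-h}$ with $c'=4\deg(f)\,C$.

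The main obstacle will be the extension of Proposition~\ref{other} used in the second step: the proposition as stated only bounds the minimum chain joining opposite $0$-edges of $\mathcal C$, whereas here we need to join any two prescribed disjoint $0$-cells lying on the boundary of a single $0$-tile. This extension should follow by rerunning the dual-graph and Menger-theorem argument of Proposition~\ref{other} (using Lemmas~\ref{Usep} and~\ref{Uconnected}) with the prescribed cells in place of opposite $0$-edges, and verifying that the resulting bounding constant depends only on $f$ and not on the choice of cells.
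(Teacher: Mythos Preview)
Your reduction via $f^{h-1}$ to the case $h=1$ is fine, but the rest of the plan diverges from the paper and has a genuine gap.

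The paper does not decompose into sub-chains at all. It picks an $(h-1)$-vertex $v\in X^{h-1}\cap Y^{h-1}$ and runs a \emph{single} Menger argument in the dual graph $G^n(v)$ of the flower $W^{h-1}(v)$, with $A$ the set of all $n$-tiles in $X^h$ and $B$ the set of all $n$-tiles in $Y^h$. The minimal separating set, being connected and cutting across an $e$-chain of $h$-tiles from $X^h$ to $Y^h$, must meet two disjoint $h$-edges and hence has at least $D_{n-h}$ elements by Lemma~\ref{disjointcell1}. Menger then gives $\ge D_{n-h}$ disjoint $A$--$B$ paths inside a flower containing at most $K(\deg f)^{n-h}$ many $n$-tiles (here Lemma~\ref{noperiodic} enters), and $(**)$ turns this into the bound $N_n\le (K/c)\Lambda_0^{n-h}$ on the shortest path.

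Your plan instead tries to assemble the chain from sub-chains, one for each consecutive pair of $1$-tiles. The problem is the concatenation. The Menger-style argument you invoke produces, for each pair, a chain that starts at \emph{some} $(n{-}h{+}1)$-tile meeting the shared $1$-cell and ends at \emph{some} tile meeting the next shared cell; you have no control over which tiles these are. When the shared $1$-cell is an edge, the last tile of sub-chain $i$ and the first tile of sub-chain $i{+}1$ may touch that edge at far-apart places and fail to intersect, so ``concatenating across the $s$ consecutive pairs'' is not automatic. Bridging these gaps along a $1$-edge is itself a nontrivial estimate, and you have not supplied it. The single Menger argument in the flower sidesteps this entirely.

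One further point: the ``extension of Proposition~\ref{other}'' you flag as the main obstacle really hinges on the lower bound $D_k\ge c\Lambda_0^{k}$ from $(**)$, not on the upper bound that Proposition~\ref{other} proves. The Menger count only gives (shortest path)$\cdot$(size of min separating set)$\ \le\ $(total tiles), so to bound the shortest path you must know the separating set is large, and that is exactly $(**)$ via Lemma~\ref{disjointcell1}. Make that dependence explicit.
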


\begin{proof}
The lemma is trivial if $X^h\cap Y^h\not=\emptyset$. Now assume that $X^h$ and $Y^h$ are disjoint.
Let $v$ be an $(h-1)$-vertex in $X^{h-1}\cap Y^{h-1}$, and let $G^{n}(v)$ and $G^h(v)$ be the dual graphs associated to the cell decompositions of $W^{h-1}(v)$ consisting of $n$-tiles and $h$-tiles respectively (see the paragraph before Lemma \ref{congraph} for the meaning of the notation). By Lemma \ref{congraph}, the graphs $G^{n}(v)$ and $G^h(v)$ are both path-connected.

Let $A$ be the set of all $n$-tiles in $X^h$, and $B$ be the set of all the $n$-tiles in $Y^h$. Let $S$ be a minimal separating set between $A$ and $B$. 
Consider $A,B,S$ as vertices in $G^n(v)$.
By Lemma \ref{Usep}, the set $|S|$ separates $X^h$ and $Y^h$ in $W^{h-1}(v)$. Since $\inter(X^h)$ and $\inter(Y^h)$ are both connected regions and disjoint from $|S|$, by Lemma \ref{Uconnected}, one of the connected component of $|S|$ separates $\inter(X^h)$ and $\inter(Y^h)$. Since $S$ is a minimal separating set, the separating set $|S|$ is connected.


\vspace{.5cm}
\begin{center}
\mbox{ \scalebox{0.7}{\includegraphics{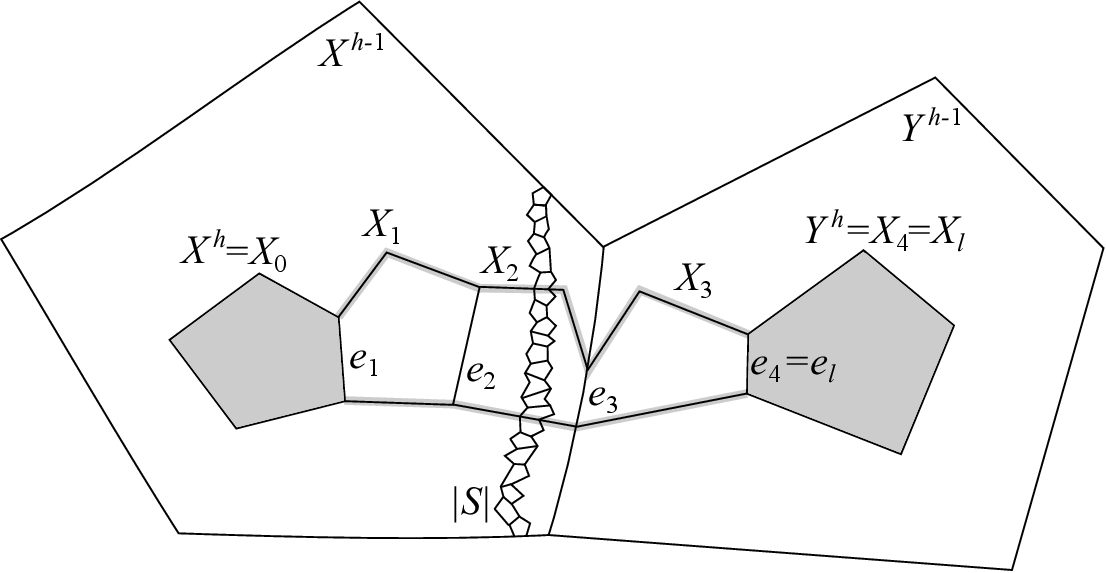}}}
\end{center}
\vspace{.5cm}

Since $G^h(v)$ is path-connected, there is an $e$-chain $P=X_0X_1X_2\ldots X_l$ of $h$-tiles in $W^{h-1}(v)$ with $X_0=X^h$ and $X_l=Y^h$. After possibly replacing $P$ with a shorter e-chain, we may assume that $X_i\not=X_j$ if $i\not=j$. Pick an $h$-edge in $X_{i-1}\cap X_{i}$ and call it $e_{i}$, for $i=1,2,\ldots, l$. Notice that there are two connected components in
\[Q_i=\partial X_i\setminus \big(\inter(e_i)\cup \inter(e_{i+1})\big),\]
for $i=1,\ldots, l-1$.
The union $Q=\cup_{i=1}^{l-1} Q_i$ has two connected components, which gives us two disjoint paths from $X^h$ to $Y^h$ (See the figure above).
Since $|S|$ intersects with both components of $Q$, the set $|S|$ joins at least two disjoint $h$-edges. Hence, there exist at least $D_{n-h}$ $n$-tiles in $S$.

Let $N_n$ be the minimal number of $n$-tiles in an $A$-$B$ path. By Menger's Theorem, there are at least $D_{n-h}$ non-disjoint $A$-$B$ paths in $G^n(v)$. Thus
\[N_nD_{n-h}\leq K(\deg f)^{(n-h)},\]
where $K>0$ is a constant as in Lemma \ref{noperiodic} which only depends on $f$. Hence
\[ N_n\leq \frac{K(\deg f)^{(n-h)}}{D_{n-h}}\leq \frac{K}{C} \lambda_0^{n-h}=c'\lambda_0^{n-h},\]where $c'$ only depends on $f$.
\end{proof}

Recall the function $d_n$ as defined in \ref{defdn}.

\begin{lem} \label{leq}
Assume that $(**)$ holds and assume that $\lambda_0 >2$.
There exists a constant $C>0$ depending only on $f$, such that for any $x,y\in \S^2$ with $x\not=y$ and for any $n> m(x,y)$,
\[d_n(x,y)\leq C\lambda_0^{-m(x,y)}.\]
\end{lem}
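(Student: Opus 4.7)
The plan is to combine Lemma~\ref{short} with an auxiliary bound for the distance between two $n$-tiles that share a common ancestor tile. Set $m := m(x,y)$. Note that $m \geq 1$: the two $0$-tiles share the Jordan curve $\mathcal{C}$, so they are never disjoint. By definition of $m$, there exist disjoint $m$-tiles $X^m, Y^m$ with $x \in X^m$, $y \in Y^m$; the minimality of $m$ forces their $(m-1)$-parents $X^{m-1}, Y^{m-1}$ to intersect, for otherwise one could replace $X^m, Y^m$ by those parents at level $m-1$. Because $\D^{n}$ refines $\D^{m}$, each $n$-tile lies in a unique $m$-tile, so we may choose $n$-tiles $X_1^n \subseteq X^m$ and $X_N^n \subseteq Y^m$ with $x \in X_1^n$ and $y \in X_N^n$.

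The key intermediate step is the following auxiliary claim: \emph{for any $l$-tile $H$ (with $l \geq 0$) and any two $n$-tiles $A, B \subseteq H$ with $n \geq l$, there is an $n$-tile chain from $A$ to $B$ of length at most $K \Lambda_0^{n-l}$, where $K$ depends only on $f$.} I would prove this claim by induction on $n-l$. The base cases $n - l \leq 1$ are trivial: when $n = l$ one has $A = B = H$; when $n = l+1$, $A$ and $B$ are two of the at most $\deg f = \Lambda_0^2$ many $n$-tiles inside $H$, so they can be joined by a chain of length at most $\Lambda_0^2$. In the inductive step with $n - l \geq 2$ and $A \neq B$, consider the $(l+1)$-parents $A_1, B_1$ of $A, B$ inside $H$. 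If $A_1 = B_1$, the inductive hypothesis at level $l+1$ already gives the desired chain. If $A_1 \neq B_1$, then $A_1$ and $B_1$ are distinct $(l+1)$-tiles whose $l$-parents both equal $H$ and hence intersect; Lemma~\ref{short} with $h = l+1$ (applicable because $n > l+1$) yields an $n$-tile chain of length at most $c' \Lambda_0^{n-l-1}$ from some $A' \subseteq A_1$ to some $B' \subseteq B_1$. The inductive hypothesis inside $A_1$ and inside $B_1$ supplies chains from $A$ to $A'$ and from $B'$ to $B$, each of length at most $K\Lambda_0^{n-l-1}$. Concatenating gives a chain of total length at most $(2K + c') \Lambda_0^{n-l-1}$, which is bounded by $K\Lambda_0^{n-l}$ provided $K \geq c'/(\Lambda_0 - 2)$. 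This is precisely where the hypothesis $\Lambda_0 > 2$ is used: it makes $c'/(\Lambda_0-2)$ finite, so one can take $K := \max\{\Lambda_0^2, c'/(\Lambda_0-2)\}$, depending only on $f$.

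With the auxiliary claim in hand, the proof of Lemma~\ref{leq} becomes a three-step concatenation. Apply Lemma~\ref{short} at $h = m$ (using $X^{m-1} \cap Y^{m-1} \neq \emptyset$) to obtain an $n$-tile chain $Z_1 \ldots Z_L$ with $Z_1 \subseteq X^m$, $Z_L \subseteq Y^m$, and $L \leq c' \Lambda_0^{n-m}$. The auxiliary claim applied inside $X^m$ produces an $n$-tile chain from $X_1^n$ to $Z_1$ of length at most $K\Lambda_0^{n-m}$; inside $Y^m$ it produces an $n$-tile chain from $Z_L$ to $X_N^n$ of the same length bound. Concatenating the three chains yields an $n$-tile chain joining $x$ and $y$ of total length at most $(2K + c')\Lambda_0^{n-m}$. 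Multiplying by $\Lambda_0^{-n}$ gives $d_n(x,y) \leq (2K + c')\Lambda_0^{-m(x,y)}$, so $C := 2K + c'$ works.

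The main obstacle is closing the induction in the auxiliary claim: the bound must simultaneously absorb the $c'$ contribution of the Lemma~\ref{short} chain and the contributions of two endpoint extensions at every level of the recursion, and this telescoping is only possible when the per-level amplification $\Lambda_0$ strictly exceeds $2$. Thus the assumption $\Lambda_0 > 2$ in the statement of Lemma~\ref{leq} is exactly what the proof needs, rather than an accidental hypothesis.
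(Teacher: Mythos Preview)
Your proof is correct and follows essentially the same approach as the paper: both arguments repeatedly invoke Lemma~\ref{short} to bridge pairs of tiles whose parents intersect, and both rely on the geometric series $\sum (2/\Lambda_0)^t$ converging because $\Lambda_0>2$. The only difference is packaging---you encapsulate the recursion as a clean induction on $n-l$ (your auxiliary claim), whereas the paper unfolds the same recursion explicitly as a dyadic gap-filling procedure, summing the contributions level by level.
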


\begin{proof}
For simplicity of notation, let $m=m(x,y)$. With the notation of Lemma~\ref{short}, let
\[
A(k) = 2^{k+1}\lambda_0^2 + c'\lambda_0^k(1 + 2/\lambda_0 + \cdots + (2/\lambda_0)^{k-1}),
\]
for all non-negative integers $k$, and let $A(k) = 2$ for all negative integers $k$. 
Observe that $A(k) = 2A(k - 1) + c'\lambda_0^k$ for $k > 0$, and $A(k) \ge A(k-1)$ for all $k$.
We will first show that there exists an $n$-tile chain joining $x$ and $y$ of length at most $A(n-m)$. 

By the definition of $m$, $m > 0$ and there exists non-disjoint $(m - 1)$-tiles $X^{m-1}$ and $Y^{m-1}$ containing $x$ and $y$ respectively.
If $n < m$, then there exists an $n$-tile chain of length $A(n-m) = 2$ joining $x$ and $y$. 
If $n = m$, then since an $(n-1)$-tile contains precisely $\lambda_0^2$ $n$-tiles,
the union of all $n$-tiles in $X^{m-1}$ and $Y^{m-1}$ forms an $n$-tile chain joining $x$ and $y$ of length $A(0) = 2\lambda_0^2$.

Hence we may assume that $n > m$. We will argue by induction on $n - m$. 
Fix $m$-tiles $X^m \subseteq X^{m-1}$ and $Y^{m} \subseteq Y^{m-1}$ containing $x$ and $y$ respectively.
By Lemma~\ref{short}, there exists an $n$-tile chain $P$ of length at most $c'\lambda_0^{n-m}$ joining
$X^m$ and $Y^{m}$. Let $x' \ne x$ and $y' \ne y$ be points in the first and last $n$-tiles of the chain $P$ respectively.
Then any $m$-tile
containing $x'$ also contains the first $n$-tile in $P$, and hence has non-empty intersection with $X_m$. Therefore, $m(x,x')> m$ and by induction
there exists an $n$-tile chain $P_x$ joining $x$ and $x'$ of length at most $A(n-m(x,x')) \le A(n - m - 1)$. Similarly, 
there exists an $n$-tile chain $P_y$ joining $y$ and $y'$ of length at most $A(n - m - 1)$.  We conclude that the union of $P_x$, $P_y$ and 
$P$ is an $n$-tile chain joining $x$ and $y$ of length at most 
\[
2A(n-m-1) + c'\lambda_0^{n-m} = A(n-m).
\]

Finally, for $k > 0$, since $\lambda_0 > 2$, we have 
\[
A(k) < 2^{k+1}\lambda_0^2 + c'\lambda_0^k/(1 - 2/\lambda_0) < \lambda_0^k[4 + c'\lambda_0/(\lambda_0 - 2)].
\] 
Hence the result follows by setting $C = 4 + c'\lambda_0/(\lambda_0 - 2)$. 
\end{proof}

\excise{
Let $X^{m+1}_{0}, Y^{m+1}_{l}, X^{m+1}_{l},Y^{m+1}_{1}$ be the $(m+1)$-tiles containing the $n$-tiles $X_0,Y_l,X_l,Y_1$ respectively.  The parents of $X^{m+1}_0, Y^{m+1}_l$ are non-disjoint. By Lemma \ref{short}, there exists an $n$-tile chain $P_{l^2}=P(Y_{l^2},X_{l^2})$ with at most $c'\lambda_0^{n-m-1}$ many tiles joining $X^{m+1}_0$ and $Y^{m+1}_l$. Similarly, there exists an $n$-tile chain  $P_{3l^2}=P(Y_{3l^2},X_{3l^2})$ with at most $c'\lambda_0^{n-m-1}$ tiles joining $X^{m+1}_l$ and $Y^{m+1}_l$.

Continuing in this manner, for $0\leq  t\leq n-m$, we let $X^{m+t}_{il^t}, Y^{m+t}_{(i+1)l^t}$ be the $(m+t)$-tiles containing the $n$-tiles $X_{il^t},Y_{(i+1)l^t}$ respectively, for $i=0,1,\ldots,2^t-1$. The parents of $X^{m+t}_{il^t}, Y^{m+t}_{(i+1)l^t}$ are non-disjoint, so by Lemma \ref{short}, there exists an $n$-tile chain \[P_{(2i+1)l^{t+1}}=P(Y_{(2i+1)l^{t+1}},X_{(2i+1)l^{t+1}})\] with at most $c'\lambda_0^{n-m-t}$ many tiles joining $X^{m+t}_{il^t}$ and $Y^{m+t}_{(i+1)l^t}$, for $i=0,1,\ldots,2^t-1$.

In particular, when $t=n-m$,
the $n$-tiles $X^{n}_{il^{n-m}}, Y^{n}_{(i+1)l^{n-m}}$ contain the $n$-tiles $X_{il^{n-m}},Y_{(i+1)l^{n-m}}$ respectively, for $i=0,1,\ldots,2^{n-m}-1$, and
there exists an $n$-tile chain
\[P_{(2i+1)l^{n-m+1}}=P(Y_{(2i+1)l^{n-m+1}},X_{(2i+1)l^{n-m+1}})\]
with at most $c'$ tiles joining $X^{n}_{il^{n-m}}$ and $Y^{n}_{(i+1)l^{n-m}}$, for $i=0,1,\ldots,2^{n-m}-1$.
Notice that
\[X^{n}_{il^{n-m}}=X_{il^{n-m}}=Y_{(2i+1)l^{n-m+1}}\] and
\[Y^{n}_{(i+1)l^{n-m}}=Y_{(i+1)l^{n-m}}=X_{(2i+1)l^{n-m+1}}\]
for $i=0,1,\ldots, 2^{n-m}-1$, since they are all $n$-tiles. Hence, we get a finite sequence of $n$-tile chains
\[P_{il^{n-m+1}}=P(Y_{il^{n-m+1}},X_{il^{n-m+1}}) \mbox{ for } i=0,1\ldots, 2^{n-m+1}-1,\]
such that their union joins $X_0$ and $Y_1$.

This implies that we get an $n$-tile chain $P$ joining $x$ and $y$ with the number of tiles equal to
\begin{eqnarray*}
\length(P)&=&\sum_{i=0}^{2^{n-m+1}-1}\length(P_{il^{n-m+1}})\\
&=& \length(P_{l})+ (\length(P_{l^2})+\length(P_{3l^2}))+ \ldots \\
&& + \sum_{i=0}^{i=2^{t}-1} \length(P_{(2i+1)l^{t+1}}) +\ldots +\sum_{i=0}^{2^{n-m}-1} \length(P_{(2i+1)l^{n-m+1}})\\
&\leq & c'\lambda_0^{n-m}+2c'\lambda_0^{n-m-1}+ \ldots+ 2^t c'\lambda_0^{n-m-t}+ \ldots + 2^{n-m}c'\\
&=& c'\lambda_0^{n-m}[1+2/\lambda_0+ \ldots + (2/\lambda_0)^{t} +\ldots+ (2/\lambda_0)^{n-m}]\\
&\leq & C\lambda_0^{n-m}
\end{eqnarray*}
where $C>0$ only depends on $f$. Therefore, we have
\[d_n(x,y)\leq C\lambda_0^{n-m} \lambda_0^{-n} = C\lambda_0^{-m} = C\lambda_0^{-m(x,y)}.\]
\end{proof}
}

\begin{lem} \label{geq}
Assume that $(**)$ holds. For any $x,y\in \S^2$ with $x\not=y$, and for any $n>m(x,y)$, we have
\[d_n(x,y)\geq c\lambda_0^{-m(x,y)},\]where $c>0$ is the same constant as in $(**)$.
\end{lem}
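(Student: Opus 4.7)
The plan is to apply Lemma \ref{disjointcell1} directly. Let $m = m(x,y)$. By the very definition of $m$, I can fix disjoint $m$-tiles $X^m, Y^m$ with $x \in X^m$ and $y \in Y^m$.

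Next, I would take an arbitrary $n$-tile chain $P = X_1 X_2 \ldots X_N$ joining $x$ and $y$, so that $x \in X_1$, $y \in X_N$, and consecutive tiles intersect. Then $|P| = \bigcup_i X_i$ is connected and, since $x, y \in |P|$, it meets both of the disjoint $m$-cells $X^m$ and $Y^m$. Applying Lemma \ref{disjointcell1} with the parameters there set to $m$ (for its ``$n$'') and $n-m$ (for its ``$k$''), the collection $\{X_1, \ldots, X_N\}$ is a set of $n$-tiles whose union is connected and meets two disjoint $m$-cells, hence has cardinality at least $D_{n-m}$. Consequently $N \geq D_{n-m}$.

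The hypothesis $(**)$ now yields $D_{n-m} \geq c\Lambda_0^{n-m}$. Taking the minimum of $\length(P)\,\Lambda_0^{-n}$ over all $n$-tile chains $P$ joining $x$ and $y$ gives
\[
d_n(x,y) \;\geq\; c\Lambda_0^{n-m} \cdot \Lambda_0^{-n} \;=\; c\Lambda_0^{-m(x,y)},
\]
which is the desired bound.

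Because this is essentially a one-step application of Lemma \ref{disjointcell1}, there is no substantive obstacle to overcome. The only conceptual point is to recognize that the disjoint $m$-cells furnished by the definition of $m(x,y)$ are exactly the data needed to invoke Lemma \ref{disjointcell1} and thereby convert the length of any $n$-tile chain joining $x$ and $y$ into a lower bound controlled by $D_{n-m}$, at which point the quantitative assumption $(**)$ on $D_{n-m}$ finishes the argument.
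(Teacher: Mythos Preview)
Your proposal is correct and essentially identical to the paper's own proof: fix disjoint $m$-tiles containing $x$ and $y$, observe that any $n$-tile chain joining them must have length at least $D_{n-m}$ (which is exactly Lemma~\ref{disjointcell1}), and then invoke $(**)$ to conclude. The paper states this in two lines without explicitly naming Lemma~\ref{disjointcell1}, but the argument is the same.
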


\begin{proof}
Let $m=m(x,y)$, and let $X^m$ and $Y^m$ be disjoint $m$-tiles containing $x$ and $y$ respectively. The length of any $n$-tile chain joining $X^m$ and $Y^m$ is at least $D_{n-m}$. Hence, we have that
\begin{eqnarray*}
d_n(x,y)&\geq & D_{n-m}\lambda_0^{-n}\geq c\lambda_0^{n-m}\lambda_0^{-n}=c\lambda_0^{-m}=c\lambda_0^{-m(x,y)}.
\end{eqnarray*}
\end{proof}

\begin{pro} \label{existenceofvisualmetric}
Let $f\:\S^2\ra \S^2$ be an expanding Thurston map with no periodic critical points. Assume there exists $c>0$ such that $D_n=D_n(f,\mathcal C)\geq c\,(\deg f)^{n/2}$ for all $n>0$, where $\mathcal C$ is a Jordan curve containing $\mbox{post}(f)$. Then there exists a visual metric with
$\Lambda=(\deg f)^{1/2}$ as the expansion factor.
\end{pro}
See Definition \ref{visual}, for the definition of a visual metric.

\begin{proof}
By Theorem \ref{invariantJordancurve}, for some $n>0$, there exists a Jordan curve $\mathcal C$ containing ${\rm post}(f)$ that is invariant under $f^n$. Proposition 8.8 (v) in \cite{BMExpanding} states that a metric is a visual metric for $f^n$ if and only if it is a visual metric for $f$. Hence, we may assume that there exists a Jordan curve $\mathcal C$ that is invariant under $f$. Since we can pass to an iterate of $f$, we may assume that
\[\lambda_0=(\deg f)^{1/2} >2.\]

Let
\[d=\limsup_{n\ra\infty} d_n,\]
where $d_n$ is defined in equation \eqref{defdn}. We will show that $d$ is a visual metric on $\S^2$ with expansion factor $\Lambda_0$.

Fix $x,y\in S^2$ such that $x\not=y$. By Lemma \ref{leq},
\[d(x,y) =\limsup_{n\ra \infty}d_n(x,y) \leq C\lambda_0^{-m(x,y)},\]
where $C>0$ only depends on $f$.  By Lemma \ref{geq},
\[d(x,y) =\limsup_{n\ra \infty}d_n(x,y) \geq \lambda_0^{-m(x,y)}.\]

In addition, the function $d$ is a metric since $d_n$ is metric on $\S^2$ for all $n> 0$.
Therefore, the function $d$ is a visual metric on $\S^2$ with expansion factor $\Lambda=(\deg f)^{1/2}$.
\end{proof}

\section{The Sufficiency of the Conditions} \label{sufficiency}
\noindent
In this section, we show that under the conditions in Theorem \ref{main}, the expanding Thurston map $f$ is topologically conjugate to a Latt\`es map.

For the next definition, we use the notion of \emph{continuum}, which is a compact connected set consisting of more than one point.
\begin{de}
A metric space $(X,d)$ is called \emph{linearly locally connected} (denoted \emph{LLC}) if there exists some $\lambda> 1$ such that the following two conditions are satisfied:
\begin{description}
  \item[(LLC1)] If $B(a,r)$ is a ball in $X$ and $x, y\in B(a, r)$ and $x\not=y$, then there exists
a continuum $E\subseteq B(a,\lambda r)$ containing $x$ and $y$;
  \item[(LLC2)] If $B(a,r)$ is a ball in $X$ and $x, y \in X\setminus B(a, r)$ and $x\not=y$, then there exists
a continuum $E \subseteq X \setminus B(a, r/\lambda)$ containing $x$ and $y$.
\end{description}
\end{de}

\excise{
-----------------------
\begin{lem} \label{LLC}
If let $d$ be a visual metric that we get under the assumption of Proposition \ref{existenceofvisualmetric}, then the metric space $(\S^2,d)$ is LLC.
\end{lem}

\begin{proof}
Let $x,y\in B(a,r)$ for some $a\in \S^2$, $r>0$. Let $P$ be a tile chain connecting $x,y$ such that
\[d(x,y)\leq \length_w(P)\leq 2 d(x,y).\] Recall that the \emph{Hausdorff distance} $d_H$ between two nonempty sets $A,B\subseteq X$ is defined as
\[ d_H(A,B)=\inf\{r; \,A\subseteq B_r,B\subseteq A_r\},\]
where $A_{r}:= \bigcup_{a\in A}\{x\in X; d(x,a)\leq r\}$. We obtain that
\[d_H(\{x\},P)\leq \length_w(P) \leq 2d(x,y)\leq 4r,\]and
\[d_H(\{a\},P) \leq d(a,x)+d_H(\{x\},A)\leq r+4r=5r.\]
Hence, $P\subset B(a,5r)$ is a continuum containing $x$ and $y$ and $(\S^2,d)$ satisfies LLC1.

Now let $x,y\not\in B(a,r)$ for $a\in \S^2$, $r>0$. There exists $C>0$ such that
\[\frac1C\Lambda_0^{-m(z,w)} \leq d(z,w)\leq 2\Lambda_0^{-m(z,w)},\]
for all $z,w\in \S^2$. This means that the infimum distance needed to join opposite sides of an $n$-tile is greater than $1/C\Lambda_0^{-n}$. Let $n_0\in \N$ be the smallest integer such that
\begin{equation} \label{upperbd}
\Lambda_0^{-n_0}\leq r/2.
\end{equation} Fix $M>0$, such that
\begin{equation} \label{lowerbd}
\Lambda_0^{-n_0}\geq \frac{Cr}{M} .
\end{equation}There exists an $n_0$-tile $A$ with $A$ containing $a$. Let $e_1(A)$ be an edge of $A$ that is closest to $a$ under the metric $d$. Let $A_1$ be the union of the two tiles containing $e(A)$. The edges of $A_1$ are the edges of these two tiles containing $e(A)$ except $e(A)$. We claim that there are at most two edges of $A_1$ with distance to $a$ less than $1/(2C)\Lambda_0^{-n_0}$. Indeed, if there are three edges with distance to $a$ less than $1/(2C)\Lambda_0^{-n_0}$, then two of them must be opposite sides, and the distance between them are less than the sum of the distance of $a$ to each of them, which is less than $1/C\Lambda_0^{n_0}$. This is a contradiction. Hence, there are at most two edges of $A_1$ with distance to $a$ less than $1/(2C)\Lambda_0^{-n_0}$. If there are two edges of $A_1$ with distance to $a$ less than $1/(2C)\Lambda_0^{-n_0}$, then there have to share the same vertex with $e(A)$. Otherwise, there will again be two opposite sides of $A_1$ with distance less than $1/C\Lambda_0^{-n}$. Let $v(A)$ be that common vertex. If there is only one edge of $A_1$ with distance to $a$ is less than $1/C\Lambda_0^{-n}$, then this edge has to share a common vertex with $e(A)$ since they cannot be opposite sides. We also call this common vertex $v(A)$. If there are no edges of $A_1$ with distance to $a$ less than $1/C\Lambda_0^{-n_0}$, we just call one of the vertices  $v(A)$ of $e(A)$.

Let $\overline{F}$ be the union of $n_0$-tiles containing $v(A)$. The interior $F$ of $\overline{F}$ is simply connected by Lemma \ref{flower}. So $E=\S^2\setminus F$ is a continuum. We claim that $F$ contains the ball $B(a,r/(2M))$. Assume that there exists a point $b$ on the boundary of $F$ with $d(a,b)\leq r/(2M)$. By assumption \eqref{lowerbd}, we have that \[d(a,b)\leq \frac1{2C}\,\Lambda_0^{-n}.\]
This means that $b$ can only lie on a neighbor edge of $e(A)$. Since $b$ lies on the boundary of $F$, there are only two edges on the boundary of $F$ neighboring with $e(A)$. However, we know that these two edges lie in $A_1$ and have distances greater than $1/C\Lambda_0^{-n}$. This is a contradiction to our assumption for $b$. Hence, we proved that $F$ contains ball the $B(a,r/(2M))$. By the assumption \eqref{upperbd}, we have that
\[d_H(a,F)< 2\cdot \Lambda_0^{-n_0} \leq r,\]so $F\subset B(a,r)$. Hence, $E\subset \S^2/B(a,r/(2M))$ is a continuum containing $x$ and $y$ and $(\S^2,d)$ satisfies LLC2.
\end{proof}
--------------------------
}

A metric space $X$ is called \emph{Ahlfors $Q$-regular} for $Q>0$, if there exists a Borel measure $\mu$ and a constant $C\geq 1$ such that for any $x\in X$ and $0<r\leq \diam (X)$, 
\[ \frac1C r^Q\leq \mu (\overline B(x,r))\leq C r^Q,\]
Two metric space $(X,d_X)$ and $(Y,d_Y)$ are \emph{quasisymmetriclly equivalent}  if there are homeomorphisms  $f\:X\ra Y$ and $\eta\: [0,\infty)\ra[0,\infty)$ such that for all $x,y,z\in X$ with $x\not=z$, we have
\[\frac{d_Y(f(x),f(y))}{d_Y(f(x),f(z))}\leq \eta\left(\frac{d_X(x,y)}{d_X(x,z)}\right).\]

We have a natural metric on $\widehat{\C}=\C\cup \{\infty\}$ by stereographic projection, called the \emph{chordal metric}, defined by
\[\delta(z,w)=\frac{2|z-w|}{\sqrt{1+|z|^2}\sqrt{1+|w|^2}}, \]
\[\delta(z,\infty)= \delta(\infty,z)=\frac2{\sqrt{1+|z|^2}}\]and
\[\delta(\infty,\infty)=0\]
for $z,w\in \C$.

\begin{pro} \label{qstosphere}
If we let $d$ be a visual metric that we get under the assumption of Proposition \ref{existenceofvisualmetric}, then $(\S^2,d)$ is Ahlfors $2$-regular and quasisymmetrically equivalent to the Riemann sphere $\widehat{\C}$.
\end{pro}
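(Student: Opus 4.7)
The plan is to equip $\S^2$ with a natural Borel measure $\mu$ derived from the cell decompositions $\D^n(f,\mathcal{C})$, show that $(\S^2,d,\mu)$ is Ahlfors $2$-regular, verify that $(\S^2,d)$ is linearly locally connected, and then invoke the Bonk--Kleiner uniformization theorem \cite[Theorem 1.1]{BKQuasisymmetric}.

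First I would construct $\mu$ by mass distribution: assign each $0$-tile mass $1$, and inductively distribute the mass of each $n$-tile uniformly among its $\deg(f)$ children at level $n+1$. Since the diameters of $n$-tiles tend to $0$, this procedure yields a well-defined Borel measure on $\S^2$ with $\mu(\S^2)=2$ and $\mu(X)=(\deg f)^{-n}$ for every $n$-tile $X$.

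Next I would verify $2$-regularity. Let $C\geq 1$ be the visual constant from Lemma \ref{charvisual}. Given $x\in\S^2$ and $0<r\leq \diam_d(\S^2)$, pick $n$ so that $\Lambda_0^{-(n+1)}\leq r/C\leq \Lambda_0^{-n}$, which gives $r^2\asymp \Lambda_0^{-2n}=(\deg f)^{-n}$. For the lower bound, the point $x$ lies in some $m$-tile $X^m$ for $m=n+k_0$ with $k_0$ a fixed constant large enough to force $C\Lambda_0^{-m}\leq r$; then $X^m\subseteq B(x,r)$ and hence $\mu(B(x,r))\geq \mu(X^m)=(\deg f)^{-m}\gtrsim r^2$. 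For the upper bound, any $n$-tile meeting $B(x,r)$ lies in $B(x,r+C\Lambda_0^{-n})\subseteq B(x,2Cr)$; using the separation estimate for disjoint $n$-cells from Lemma \ref{charvisual} together with the uniform bound on the number of $n$-tiles containing a given point (a consequence of Lemma \ref{flower} and the local degree bound in Lemma \ref{noperiodic}), one shows that only a uniformly bounded number $M=M(f)$ of $n$-tiles can meet $B(x,r)$, giving $\mu(B(x,r))\leq M(\deg f)^{-n}\lesssim r^2$.

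Finally I would verify that $(\S^2,d)$ is linearly locally connected. This follows from the flower structure: two points inside a ball can be joined by a continuum built from a bounded number of $n$-tiles of the appropriate scale, while the complement of a flower of the appropriate scale provides a continuum avoiding a smaller concentric ball. Both directions use Lemma \ref{flower} together with the two-sided visual estimates of Lemma \ref{charvisual}; the argument is the same as the one carried out for visual metrics of expanding Thurston maps in \cite{BMExpanding}. Since $d$ induces the standard topology on $\S^2$, the space $(\S^2,d)$ is a metric $2$-sphere that is Ahlfors $2$-regular and LLC, so \cite[Theorem 1.1]{BKQuasisymmetric} produces a quasisymmetric homeomorphism from $(\S^2,d)$ onto $\widehat{\C}$ with its chordal metric. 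The main obstacle is the upper bound in the $2$-regularity estimate: one must combine the diameter and separation estimates for $n$-tiles with a packing argument that caps the number of $n$-tiles meeting a ball of radius comparable to $\Lambda_0^{-n}$, and it is precisely here that the hypothesis of no periodic critical points (through Lemma \ref{noperiodic}) is used in an essential way.
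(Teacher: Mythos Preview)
Your proposal is correct and follows exactly the same logical route as the paper: Ahlfors $2$-regularity plus linear local connectedness, followed by the Bonk--Kleiner uniformization theorem \cite[Theorem~1.1]{BKQuasisymmetric}. The only difference is that the paper obtains these two ingredients by direct citation of \cite[Proposition~19.10]{BMExpanding} (which gives $Q$-regularity with $Q=\log(\deg f)/\log\Lambda=2$) and \cite[Proposition~16.3(iii)]{BMExpanding} (which gives LLC for any visual metric), whereas you sketch the content of those results by hand.
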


\begin{proof}
Proposition 19.10 in \cite{BMExpanding} states that for an expanding Thurston map $f\:\S^2\ra \S^2$ without periodic critical points, if $d$ is a visual metric with expansion factor $\Lambda$, then $(\S^2,d)$ is Ahlfors $Q$-regular with
\[ Q=\frac{\log (\deg (f))}{\log \Lambda}.\] Since our $\Lambda=\deg(f)^{1/2}$, the metric space $(\S^2,d)$ is Ahlfors $2$-regular. Proposition 16.3 (iii) in \cite{BMExpanding} states that $\S^2$, with a visual metric $d$ for $f$, is linearly locally connected. Now our proposition follows immediately from Theorem 1.1 in \cite{BKQuasisymmetric}, which states that for a metric space $X$ homeomorphic to $\S^2$, if $X$ is linearly locally connected and Ahlfors $2$-regular, then $X$ is quasisymmetrically equivalent to the Riemann sphere $\widehat{\C}$.
\end{proof}

Theorem 1.7 in \cite{BMExpanding} states that:
\begin{thm}[Bonk-Meyer 2010]
For an expanding Thurston map with visual metric $d$, $(\S^2,d)$ is quasisymmetrically equivalent to the Riemann sphere $\widehat \C$ if and only if $f$ is topologically conjugate to a rational map.
\end{thm}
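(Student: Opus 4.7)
The plan is to prove the two directions separately. The easier implication is that if $f$ is topologically conjugate to a rational map, then $(\S^2,d)$ is quasisymmetrically equivalent to $\widehat\C$. Suppose $f$ is topologically conjugate via $h\colon\S^2\to\widehat\C$ to an expanding rational Thurston map $R$. I would push $d$ forward to $d':=h_\ast d$ on $\widehat\C$; since the cell decompositions and the combinatorial function $m_{f,\mathcal C}$ transport along $h$, the metric $d'$ is a visual metric for $R$. It then suffices to check that $d'$ and the chordal metric $\delta$ are quasisymmetrically equivalent. Because $R$ is postcritically finite with Julia set $\widehat\C$, classical Koebe-type distortion bounds for iterates of $R$ (exploiting the finite postcritical set) show that $\delta$ itself satisfies the tile-diameter characterization of Lemma \ref{charvisual}, hence is a visual metric for $R$. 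Comparison via the quantity $\Lambda^{-m_{f,\mathcal C}}$ in Lemma \ref{charvisual} then shows that any two visual metrics for the same expanding Thurston map are quasisymmetrically equivalent, giving the desired conclusion.

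For the converse, suppose there is a quasisymmetric homeomorphism $\phi\colon(\S^2,d)\to(\widehat\C,\delta)$, and set $g:=\phi\circ f\circ \phi^{-1}$, an expanding Thurston map on $\widehat\C$ topologically conjugate to $f$. The plan is to produce a quasiconformal conjugacy $\psi$ from $g$ to a rational map $R$; then $\psi\circ\phi$ topologically conjugates $f$ to $R$. After possibly replacing $f$ by an iterate and $d$ by a visual metric with $\Lambda=(\deg f)^{1/2}$ (allowed by Proposition 8.8(v) in \cite{BMExpanding} together with Proposition \ref{existenceofvisualmetric}), Proposition \ref{qstosphere} gives that $(\S^2,d)$ is Ahlfors $2$-regular and LLC. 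A theorem of Heinonen--Koskela then promotes the quasisymmetric map $\phi$ to be $K$-quasiconformal for some $K\geq 1$.

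Once $\phi$ is $K$-quasiconformal, each iterate $g^n=\phi\circ f^n\circ \phi^{-1}$ is $K^2$-quasiregular: locally $f^n$ is a branched covering conjugate via orientation-preserving homeomorphisms to $z\mapsto z^d$, so its intrinsic dilatation is $1$, and conjugation by $\phi$ inflates the dilatation by at most a factor of $K^2$. Thus $g$ is \emph{uniformly} quasiregular. The final step invokes the theorem of Sullivan (see also Iwaniec--Martin) that every uniformly quasiregular branched self-cover of $\widehat\C$ is quasiconformally conjugate to a rational map: one pulls back the standard conformal structure on $\widehat\C$ under the iterates $g^n$, averages to produce a $g$-invariant Beltrami coefficient $\mu$ with $\|\mu\|_\infty<1$, and straightens $\mu$ via the Measurable Riemann Mapping Theorem. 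The resulting map $\psi$ conjugates $g$ to a rational map $R$, completing the proof.

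The main obstacle is the automatic upgrade from quasisymmetric to quasiconformal, since this requires both dimension matching ($Q=2$) and the LLC property of $(\S^2,d)$; without these, quasisymmetry is strictly weaker than quasiconformality and the uniform quasiregularity argument collapses. A secondary subtlety is that the target rational map $R$ must itself be an expanding Thurston map for the conjugacy to be meaningful in our category, but this is automatic from topological invariance of $\post(f)$, the degree, and the expansion condition.
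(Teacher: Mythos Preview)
The converse direction contains a genuine gap. You invoke Proposition~\ref{existenceofvisualmetric} to obtain a visual metric with expansion factor $(\deg f)^{1/2}$, and then Proposition~\ref{qstosphere} to get Ahlfors $2$-regularity. But Proposition~\ref{existenceofvisualmetric} has as hypothesis the bound $D_n\geq c(\deg f)^{n/2}$, which is precisely the combinatorial condition that singles out Latt\`es maps; a generic expanding Thurston map does \emph{not} satisfy it, and its visual sphere $(\S^2,d)$ is Ahlfors $Q$-regular with $Q=\log(\deg f)/\log\Lambda>2$. Consequently the Heinonen--Koskela upgrade from quasisymmetric to quasiconformal is unavailable: the source and target have different regularity exponents, and $\phi$ need not be quasiconformal in any classical sense.

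The subsequent claim that $f^n$ has ``intrinsic dilatation $1$'' is also unjustified. The local model $z\mapsto z^d$ is only a \emph{topological} normal form; the chart maps are merely orientation-preserving homeomorphisms, so $f$ carries no analytic structure and no dilatation can be read off. What is actually true---and what the Bonk--Meyer argument uses---is that with respect to the visual metric $d$ the map $f$ (and uniformly every iterate $f^n$) is a local quasi-similarity: $d(f(x),f(y))\asymp \Lambda\, d(x,y)$ on small balls, with implicit constants independent of $n$. Composing such a map on both sides with the quasisymmetry $\phi$ yields that each $g^n=\phi f^n\phi^{-1}$ is metrically quasiregular on $(\widehat\C,\delta)$ with uniform distortion; on the Riemann sphere this coincides with analytic quasiregularity, and then the Sullivan/Hinkkanen straightening applies. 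The point is that uniform quasiregularity of $g$ follows from the visual-metric geometry of $f$ directly, without any $2$-regularity assumption on $(\S^2,d)$ and without ever promoting $\phi$ to a quasiconformal map.
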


By this theorem and Proposition \ref{qstosphere}, there exists a rational map $R\: \S^2\ra \S^2$ and a homeomorphism $\phi$ such that $\phi \circ f=R\circ \phi$. See the diagram below:
\[\begin{CD}
\S^2 @>\phi>> \widehat{\C}\\
@VVf V @VV R V\\
\S^2 @>\phi>> \widehat{\C}
\end{CD}\]
Since $\phi$ is a homeomorphism,  $\deg(R)=\deg(f)$. The Jordan curve $\mathcal C'=\phi(\mathcal C)$ contains all the post-critical points of $R$, where $\mathcal{C}\subset \S^2$ is a Jordan curve containing $\post(f)$. Also, $X$ is an $n$-tile in the cell decomposition induced by $(f,\mathcal C)$ if and only if $\phi(X)$ is an $n$-tile in the cell decomposition induced by $(R,\mathcal C')$. So the minimal numbers of $n$-tiles needed to join opposite sides of the Jordan curves $\mathcal C$ and $\mathcal C'$ are the same, i.e., $D_n(R,\mathcal C')=D_n(f,\mathcal C)$. By Proposition \ref{existenceofvisualmetric} and Proposition \ref{qstosphere}, there exists a visual metric $d_R$ on $\widehat{\C}$ with expansion factor $\Lambda_0=(\deg(R))^{1/2}$ and $(\widehat{\C},d_R)$ is Ahlfors 2-regular. In addition, with the metric \[d_R(\phi(x),\phi(y))=d(x,y),\] $\phi$ is an isometry since $m_{f,\mathcal{C}}(x,y)=m_{R,\mathcal{C'}}(\phi(x),\phi(y))$. Hence, we have the following:

\begin{pro}\label{smallsum}
Let $f\:\S^2\ra \S^2$ be an expanding Thurston map with no periodic critical points. If there exists $c>0$ such that $D_n\geq c(\deg f)^{n/2}$ for all $n>0$, then $f$ is topologically conjugate to a rational map $R\:\widehat{\C}\ra \widehat{\C}$. In addition, there is a visual metric $d$ on $\widehat{\C}$ for $R$ with expansion factor $\Lambda=(\deg(R))^{1/2}$ such that $(\widehat{\C},d)$ is Ahlfors 2-regular.
\end{pro}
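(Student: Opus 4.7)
The plan is to assemble the proposition from the three earlier results that have already been prepared, namely Proposition~\ref{existenceofvisualmetric}, Proposition~\ref{qstosphere}, and the Bonk--Meyer rationality criterion (Theorem~1.7 in \cite{BMExpanding}) stated just before this proposition. First I would invoke Proposition~\ref{existenceofvisualmetric} to produce a visual metric $d$ on $\S^2$ with expansion factor $\Lambda = (\deg f)^{1/2}$; this is exactly where the hypothesis $D_n\geq c(\deg f)^{n/2}$ enters. Then I would feed $(\S^2,d)$ into Proposition~\ref{qstosphere} to conclude that $(\S^2,d)$ is Ahlfors $2$-regular (since $Q=\log(\deg f)/\log\Lambda=2$) and quasisymmetrically equivalent to the Riemann sphere $\widehat{\C}$.

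Next, the Bonk--Meyer theorem quoted just above the proposition says that an expanding Thurston map whose sphere with a visual metric is quasisymmetrically equivalent to $\widehat{\C}$ must be topologically conjugate to a rational map. Applying this to $f$ yields a rational map $R\:\widehat{\C}\ra\widehat{\C}$ and a homeomorphism $\phi\:\S^2\ra\widehat{\C}$ with $\phi\circ f = R\circ\phi$. The degrees match, so $\deg R = \deg f$, and the image $\mathcal{C}' := \phi(\mathcal C)$ is a Jordan curve containing $\post(R)$.

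The remaining step is to push the metric $d$ forward along $\phi$. Define
\[
d_R(\phi(x),\phi(y)) := d(x,y)\qquad\text{for all }x,y\in\S^2.
\]
This makes $\phi$ an isometry by construction, so $(\widehat{\C},d_R)$ inherits Ahlfors $2$-regularity from $(\S^2,d)$. To see that $d_R$ is a visual metric for $R$ with expansion factor $\Lambda$, I would observe that the semi-conjugacy $\phi\circ f=R\circ\phi$ forces $\phi$ to map $n$-tiles of $(f,\mathcal{C})$ bijectively onto $n$-tiles of $(R,\mathcal{C}')$ for every $n\geq 0$, since both cell decompositions are obtained by pulling back $\mathcal{C}$, respectively $\mathcal{C}'$, under $n$-fold iteration. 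Consequently
\[
m_{f,\mathcal{C}}(x,y) = m_{R,\mathcal{C}'}(\phi(x),\phi(y))
\]
for all distinct $x,y\in \S^2$, and the visual-metric estimate
\[
\tfrac{1}{C}\Lambda^{-m_{f,\mathcal{C}}(x,y)} \leq d(x,y)\leq C\Lambda^{-m_{f,\mathcal{C}}(x,y)}
\]
transfers verbatim to $d_R$ on $\widehat{\C}$ with the same constants and expansion factor $\Lambda = (\deg R)^{1/2}$.

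There is no genuine obstacle beyond bookkeeping; the only item that warrants care is the identification of $n$-cells under $\phi$, which I would justify by induction on $n$ using $\phi\circ f^n = R^n\circ\phi$ and the fact that tiles are precisely the closures of connected components of $\S^2\setminus f^{-n}(\mathcal C)$, respectively $\widehat{\C}\setminus R^{-n}(\mathcal{C}')$ (Lemma~\ref{tilenumber}~(4)). Once that is in hand, the visual-metric and Ahlfors-regularity conclusions are immediate from the isometry $\phi$.
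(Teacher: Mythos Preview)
Your proposal is correct and follows essentially the same approach as the paper: invoke Proposition~\ref{existenceofvisualmetric} and Proposition~\ref{qstosphere} to obtain the visual metric, Ahlfors $2$-regularity, and quasisymmetric equivalence to $\widehat{\C}$; apply the Bonk--Meyer theorem to get the conjugating homeomorphism $\phi$ and rational map $R$; and then use the fact that $\phi$ carries $n$-tiles for $(f,\mathcal C)$ to $n$-tiles for $(R,\mathcal C')$, so that $m_{f,\mathcal C}(x,y)=m_{R,\mathcal C'}(\phi(x),\phi(y))$ and the pushforward $d_R(\phi(x),\phi(y)):=d(x,y)$ is a visual metric for $R$ with the same expansion factor. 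The paper's argument additionally notes that $D_n(R,\mathcal C')=D_n(f,\mathcal C)$ and re-applies Propositions~\ref{existenceofvisualmetric} and~\ref{qstosphere} directly to $R$, but this is equivalent to your pushforward route.
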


Corollary 18.4 in \cite{BMExpanding} says that:
\begin{lem} \label{chordalmetirc}
If $d$ is a visual metric for an expanding rational Thurston map $R$, then
the identity map ${\rm id} \: (\S^2,d)\ra (\S^2,\delta)$ is a quasisymmetry, where $\delta$ is the chordal metric.
\end{lem}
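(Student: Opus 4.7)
The plan is to show that the chordal metric $\delta$ satisfies geometric estimates on the tile decomposition $\D^n(R,\mathcal{C}')$ that are compatible with those satisfied by the visual metric $d$ (as characterized by Lemma \ref{charvisual}), and then deduce quasisymmetry from this compatibility. Concretely, Lemma \ref{charvisual} already tells us that disjoint $n$-cells are $d$-separated by roughly $\Lambda^{-n}$ and that $n$-tiles have $d$-diameter roughly $\Lambda^{-n}$; the entire task is to exhibit $\delta$-analogues of these estimates, but with the fixed scale $\Lambda^{-n}$ replaced by a quantity that depends on location through the derivative of $R^n$.

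The first and main step is to exploit that $R$ is rational (hence conformal with respect to $\delta$) and that Lemma \ref{noperiodic} gives a uniform upper bound $N$ on local degrees of every iterate $R^n$. Consequently, each branch of $R^{-n}$ restricted to a slight enlargement of a $0$-tile factors as a composition of a branched cover of degree at most $N$ with a univalent map, to which Koebe distortion applies. From this I would extract two \emph{bounded geometry} properties: \textup{(i)} the chordal diameter and the chordal inradius of any $n$-tile $X$ are comparable, with constants independent of $n$ and $X$; \textup{(ii)} any two intersecting $n$-tiles have comparable chordal inradii, a Harnack-type estimate for the contraction rates of nearby inverse branches. These properties are precisely the $\delta$-analogues of Lemma \ref{charvisual}, with the role of $\Lambda^{-n}$ played by the inradius $r_n(x)$ of an $n$-tile containing $x$.

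Granting \textup{(i)} and \textup{(ii)}, for $x\neq y$ with $m=m_{R,\mathcal{C}'}(x,y)$ I would show $\delta(x,y)\asymp r_m(x)$: the upper bound comes from joining $x$ and $y$ through nondisjoint $m$-tiles and using \textup{(ii)}, while the lower bound follows since disjoint $(m+1)$-tiles containing $x$ and $y$ are $\delta$-separated by a quantity comparable to their inradii. Combining with $d(x,y)\asymp \Lambda^{-m}$ from Lemma \ref{charvisual}, the quasisymmetry condition reduces to controlling $r_{m'}(x)/r_m(x)$ as a function of $m-m'$. The decisive observation is that the contraction rate of the specific branch of $R^{-k}$ carrying a $0$-tile onto the $k$-level tile through $x$ is, by \textup{(i)}--\textup{(ii)}, captured up to bounded factor by $r_{n+k}(x)/r_n(x)$, and this rate is bounded uniformly between $\beta^{-k}$ and $\alpha^{-k}$ for some $1<\alpha\leq \beta<\infty$ (expansion on the whole sphere as Julia set provides the upper expansion bound $\beta$, while the trivial lower rate $\alpha$ comes from compactness). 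This produces a power-type distortion function $\eta(t)=C\,t^{\log_\Lambda \alpha}$, yielding the quasisymmetry of the identity map.

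The principal obstacle is establishing the bounded geometry properties \textup{(i)} and \textup{(ii)} uniformly across all tiles, particularly those touching the postcritical set, where Koebe distortion in its raw form fails. Overcoming this requires carefully controlling the branching of $R^{-n}$ near $\mbox{post}(R)$ by the universal bound $N$ from Lemma \ref{noperiodic}, and showing that a bounded-degree branched factor distorts shape only by a factor depending on $N$. This is the technical heart of the argument, and explains why the hypothesis that $R$ has no periodic critical points cannot be dispensed with.
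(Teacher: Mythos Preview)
The paper does not prove this lemma at all; it is simply quoted as Corollary~18.4 of \cite{BMExpanding}. Your outline is a correct sketch of the argument actually carried out in that reference: the bounded local degree from Lemma~\ref{noperiodic} lets one apply Koebe distortion to inverse branches of $R^n$ (with a bounded-degree branched factor absorbing the ramification over $\post(R)$), yielding your roundness property~(i) and Harnack property~(ii); these in turn give $\delta(x,y)\asymp r_{m(x,y)}(x)$, and the quasisymmetry follows by comparing the geometric decay of $r_n(x)$ to $\Lambda^{-n}$.

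One small correction: your distortion function $\eta(t)=C\,t^{\log_\Lambda\alpha}$ only works for $t\le 1$. For $t\ge 1$ (i.e.\ when $m(x,y)<m(x,z)$) the relevant exponent is the larger one, $\log_\Lambda\beta$, since it is the \emph{upper} expansion rate $\beta$ that controls how much larger the $\delta$-ratio can be than the $d$-ratio. A valid choice is $\eta(t)=C\max\bigl(t^{\log_\Lambda\alpha},\,t^{\log_\Lambda\beta}\bigr)$. Also, your closing remark that the no-periodic-critical-points hypothesis ``cannot be dispensed with'' overstates matters in the rational setting: for a rational Thurston map, expanding in the sense of Definition~\ref{expandingmap} forces the Julia set to be all of $\widehat\C$, which already rules out super-attracting cycles and hence periodic critical points.
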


To state our next lemma, let us recall some definitions on metric spaces. We refer the reader to \cite{HKQuasiconformal} for more details. Given a real-valued function $u$ on a metric space $X$, a Borel function $\rho\: X\ra [0,\infty]$ is said to be an \emph{upper gradient} of $u$ if
\[|u(x)-u(y)|\leq \int_{\gamma}\rho\, d s\]for each rectifiable curve $\gamma$ joining $x$ and $y$ in $X$. If $u$ is a smooth function on $\R^n$, then its gradient $|\nabla u|$ is an upper gradient. We say that a metric space $X$ equipped with a (Borel) measure $\mu$ admits a \emph{$(1,p)$-Poincar\'e inequality} for $p\geq 1$, if there are constants $0<\lambda \leq 1$ and $C\geq 1$ such that for all balls $B$ in $X$, for all bounded continuous functions $u$ on $B$, and for all upper gradients $\rho$ of $u$ on $B$, we have that
\[ \frac1{\mu(\lambda B)}\int_{\lambda B}|u-u_{\lambda B}|\,d\mu \leq C(\diam B)\left(\frac1{\mu(B)}\int_B\rho^p\,d\mu\right)^{1/p}, \]
where $\lambda B$ is a scaling of the ball $B$ by $\lambda$ and
\[u_{\lambda B}=\frac1{ \mu(\lambda B)}\int_{\lambda B}u\,d\mu.\] Corollary 7.13 in \cite{HKQuasiconformal} states that:
\begin{thm}[Heinonen-Koskela 1998] \label{absolutecontinuous}
Let $X$ and $Y$ be two locally compact $Q$-regular spaces,
where $X$ satisfies a $(1,p)$-Poincar\'e inequality for $p<Q$. If $g$ is a quasisymmetric map from $X$ to $Y$, then $g$ and its inverse are absolutely continuous with respect to the Hausdorff $Q$-measure (of each individual space).
\end{thm}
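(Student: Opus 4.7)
The plan is to establish the Lusin $N$-property for both $g$ and $g^{-1}$ by interpreting absolute continuity in terms of the $Q$-modulus of curve families. Recall that for a $Q$-regular space the $Q$-modulus of a curve family $\Gamma$ is $\mathrm{Mod}_Q(\Gamma) = \inf \int \rho^Q\, d\mathcal{H}^Q$, the infimum taken over Borel densities $\rho$ admissible for $\Gamma$. A Fuglede-type lemma combined with $Q$-regularity shows that whenever $\mathcal{H}^Q(E) = 0$, the family $\Gamma_E$ of rectifiable curves meeting $E$ has $\mathrm{Mod}_Q(\Gamma_E) = 0$, and conversely abundant curve families detect sets of positive $Q$-measure. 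So it suffices to show that $g$ and $g^{-1}$ quasi-preserve the $Q$-modulus.

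First I would show that $X$ is $Q$-Loewner, meaning that the $Q$-modulus of curves joining two disjoint nondegenerate continua $E,F \subset X$ is bounded below by a positive function of the relative distance $\mathrm{dist}(E,F)/\min(\mathrm{diam}\, E,\mathrm{diam}\, F)$. This is deduced from the $(1,p)$-Poincar\'e inequality with $p < Q$ together with $Q$-regularity by a now-standard argument: one tests the Poincar\'e inequality against truncated distance functions to transfer it into a lower modulus bound, using the strict inequality $p < Q$ to obtain the required metric Sobolev embedding. Next I would prove the modulus quasi-invariance: given a density $\rho$ admissible for $g(\Gamma)$, one pulls it back through $g^{-1}$ and rescales using the quasisymmetric distortion function $\eta$ together with the $Q$-regularity of both spaces, producing a density admissible for $\Gamma$ with comparable $L^Q$-norm. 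This yields $\mathrm{Mod}_Q(g(\Gamma)) \le K\,\mathrm{Mod}_Q(\Gamma)$ with $K$ depending only on the data, and by symmetry of quasisymmetry the reverse bound also holds. As a byproduct, $Y$ inherits the $Q$-Loewner property.

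With modulus quasi-invariance in hand, the theorem follows: if $E \subset X$ has $\mathcal{H}^Q(E) = 0$, then $\mathrm{Mod}_Q(\Gamma_E) = 0$, hence $\mathrm{Mod}_Q(g(\Gamma_E)) = 0$ in $Y$; since $Y$ is $Q$-Loewner and $Q$-regular, this forces $\mathcal{H}^Q(g(E)) = 0$, because any density point of $g(E)$ would be surrounded by a nontrivial family of curves contradicting vanishing modulus. The same argument applied to $g^{-1}$ completes the proof. The main obstacle is the modulus quasi-invariance step: the rescaling must handle long and short curves in the family uniformly, and depends on the geometric fact that in a $Q$-regular Loewner space admissible densities can be localized to tubular neighborhoods of curves of controlled length, so that the pointwise quasisymmetric distortion translates into a genuine $L^Q$-comparison. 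Once that technical step is in place, the symmetry between $X$ and $Y$ is automatic and absolute continuity in both directions follows.
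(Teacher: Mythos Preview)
The paper does not prove this theorem; it is quoted verbatim as Corollary~7.13 of Heinonen--Koskela and used as a black box. So there is no ``paper's proof'' to compare against, only the original Heinonen--Koskela argument.

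Your outline is in the right circle of ideas, but the central step---modulus quasi-invariance---has a circularity problem as you have written it. To ``pull back'' a density $\rho$ admissible for $g(\Gamma)$ and obtain a density admissible for $\Gamma$, you must compare the line integral $\int_{g\circ\gamma}\rho\,ds$ with an integral along $\gamma$, and that comparison requires $g$ to be absolutely continuous along $Q$-almost every curve. But absolute continuity on curves is essentially what you are trying to prove; quasisymmetry alone gives you control of diameter ratios, not of arclength under composition. Indeed, Tyson's theorem that quasisymmetric maps between $Q$-regular spaces (one of them $Q$-Loewner) quasi-preserve $Q$-modulus is proved \emph{using} the Heinonen--Koskela absolute continuity result, not the other way around. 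The actual Heinonen--Koskela argument proceeds differently: from the Loewner property one shows that a metrically quasiconformal map belongs to the Newtonian Sobolev class $N^{1,Q}_{\mathrm{loc}}$ with an $L^Q$ upper gradient, and then a change-of-variables inequality $\mathcal{H}^Q(g(E))\le C\int_E g_f^{\,Q}\,d\mathcal{H}^Q$ yields the Lusin $N$-property directly. Your final step is also not solid: positivity of $\mathcal{H}^Q(g(E))$ does not obviously produce a curve family through $g(E)$ of positive $Q$-modulus (curves through a fixed point already have zero modulus when $Q>1$), so the detection of null sets via $\mathrm{Mod}_Q(\Gamma_E)$ does not work in the direction you need without further argument.
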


To formulate the next theorem, we call a metric space $X$ \emph{linearly locally contractible} if there is a $C\geq 1$ so that, for each $x\in X$ and $R<C^{-1}\diam (X)$, the ball $B(x,R)$ can be contracted to a point in $B(x,CR)$. Theorem 6.11 in \cite{HKQuasiconformal} states that:
\begin{thm}[Heinonen-Koskela 1998]  \label{poincareinequality}
Let $X$ be a connected and $n$-regular metric space that is also an orientable $n$-manifold, with $n\geq 2$. If $X$ is linearly locally contractible,
then $X$ admits a $(1,p)$-Poincar\'e inequality for all $p\geq 1$.
\end{thm}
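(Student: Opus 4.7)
Since the statement is cited from Heinonen--Koskela, my plan is to sketch their strategy, which reduces the Poincar\'e inequality to a modulus lower bound (the Loewner condition) and then exploits self-improvement. The first reduction: in any Ahlfors $n$-regular, linearly locally connected metric space, a $(1,n)$-Poincar\'e inequality is equivalent to the space being $n$-Loewner, meaning that there exists a decreasing function $\psi\colon(0,\infty)\to(0,\infty)$ such that for every pair of disjoint, nondegenerate continua $E,F\subset X$,
\[
\mathrm{mod}_n(E,F;X)\;\geq\;\psi\!\left(\frac{\mathrm{dist}(E,F)}{\min\{\diam E,\diam F\}}\right).
\]
Hence it suffices to prove that $X$ is $n$-Loewner.

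Next, one constructs enough rectifiable curves. Linear local contractibility allows small loops to be contracted within balls of comparable radius; combined with the orientable $n$-manifold structure and Alexander duality, this supplies topologically nontrivial curve families joining any two given continua. At each dyadic scale one produces many pairwise essentially disjoint curves with length controlled by the scale, while $n$-regularity bounds the mass any admissible density can concentrate near each curve. A standard capacity argument then converts these two ingredients into the desired lower bound on the $n$-modulus, establishing the Loewner property and hence the $(1,n)$-Poincar\'e inequality.

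Finally, self-improvement. H\"older's inequality immediately gives $(1,p)$ for $p\geq n$. For $1\leq p<n$, Ahlfors $n$-regular spaces are doubling, so the Keith--Zhong self-improvement theorem promotes the inequality to $(1,p)$ for some $p<n$; in the manifold setting this can be pushed all the way down to $p=1$, yielding $(1,p)$ for every $p\geq 1$. The main obstacle is the middle step: extracting enough rectifiable curves with quantitative control from the purely topological hypothesis of linear local contractibility, which is exactly where the manifold structure and Alexander duality are essential.
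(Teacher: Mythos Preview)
The paper does not supply a proof of this statement: it is quoted verbatim as Theorem~6.11 of \cite{HKQuasiconformal} and used as a black box, so there is no ``paper's own proof'' to compare against. Your sketch is therefore not competing with anything in the text.

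As a standalone outline, your first two steps are broadly faithful to the Heinonen--Koskela strategy: one shows that a linearly locally contractible, Ahlfors $n$-regular $n$-manifold is $n$-Loewner via a topological linking argument (Alexander duality produces surfaces that any curve family separating $E$ from $F$ must cross, and the coarea-type estimate converts this into a modulus lower bound), and the Loewner property is equivalent to a $(1,n)$-Poincar\'e inequality in this setting.

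Your third step, however, has a genuine gap. Invoking Keith--Zhong is anachronistic (their self-improvement theorem appeared a decade after \cite{HKQuasiconformal}) and, more seriously, it does not do what you claim: Keith--Zhong promotes $(1,p)$ to $(1,p-\varepsilon)$ for some small $\varepsilon>0$, not all the way down to $p=1$. The assertion that ``in the manifold setting this can be pushed all the way down to $p=1$'' is exactly the content that needs proof and cannot be waved through. In the original argument the $(1,1)$-inequality is obtained directly: the quantitative-topology construction (going back to Semmes) actually produces, for each pair of points, a ``pencil'' of rectifiable curves whose arclength measures integrate to a measure comparable to the ambient Hausdorff measure, and averaging the line integral of an upper gradient over such a pencil yields the $(1,1)$-Poincar\'e inequality without passing through $p=n$ first. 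If you want a correct sketch, replace the self-improvement step with this pencil-of-curves construction.
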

By the previous theorem, the Riemann sphere with the chordal metric satisfies a $(1,p)$-Poincar\'e inequality.

\begin{thm}\label{suff}
Let $f\:\S^2\ra \S^2$ be an expanding Thurston map with no periodic critical points. If there exists $c>0$ such that $D_n\geq c(\deg f)^{n/2}$ for all $n>0$, then $f$ is topologically conjugate to a Latt\`es map.
\end{thm}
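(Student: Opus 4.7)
The plan is to combine Proposition~\ref{smallsum} with Theorem~\ref{dim}, using the Heinonen--Koskela absolute continuity result (Theorem~\ref{absolutecontinuous}) as the bridge. First I would apply Proposition~\ref{smallsum} to replace $f$ (up to topological conjugacy) by a rational map $R\:\widehat\C\ra\widehat\C$ carrying a visual metric $d$ with expansion factor $\Lambda=(\deg R)^{1/2}$, with $(\widehat\C,d)$ Ahlfors $2$-regular. By Lemma~\ref{chordalmetirc}, the identity ${\rm id}\:(\widehat\C,d)\ra(\widehat\C,\delta)$ is a quasisymmetry.

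Next I would verify that the hypotheses of Theorem~\ref{absolutecontinuous} hold for this quasisymmetry with $Q=2$ and some $p\in[1,2)$. On the $\delta$ side: $(\widehat\C,\delta)$ is connected, Ahlfors $2$-regular, an orientable $2$-manifold, and linearly locally contractible; hence Theorem~\ref{poincareinequality} yields a $(1,p)$-Poincar\'e inequality for every $p\geq 1$, in particular for some $p<2$. On the $d$ side, Ahlfors $2$-regularity was just recorded. Applying Theorem~\ref{absolutecontinuous} both to ${\rm id}$ and to ${\rm id}^{-1}$ shows that the Hausdorff $2$-measures $H^2_\delta$ and $H^2_d$ are mutually absolutely continuous under the identification of the two spaces.

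Since the chordal metric $\delta$ is (locally) bi-Lipschitz equivalent to the spherical metric, $H^2_\delta$ is a constant multiple of the normalized Lebesgue measure on $\widehat\C$. Therefore the Lebesgue measure is mutually absolutely continuous with $H^2_d$. I would then deduce that the dimension of Lebesgue measure with respect to $d$ equals $2$: the lower bound follows because any measurable set $E\subseteq\widehat\C$ with $\dim^d_H(E)<2$ satisfies $H^2_d(E)=0$, hence has Lebesgue measure zero, so in particular $\mu_{\rm Leb}(E)\neq 1$; the upper bound follows from the Ahlfors $2$-regularity of $(\widehat\C,d)$, which forces $\dim^d_H(\widehat\C)=2$.

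Having a visual metric on $\widehat\C$ under which Lebesgue measure has dimension $2$, Theorem~\ref{dim} identifies $R$ as a Latt\`es map, and consequently $f$ is topologically conjugate to a Latt\`es map. The most delicate step is the middle one, namely verifying the absolute continuity of $H^2_d$ and $H^2_\delta$ via Heinonen--Koskela: the quasisymmetry furnished by Lemma~\ref{chordalmetirc} together with the Poincar\'e inequality on $(\widehat\C,\delta)$ and the $2$-regularity established in Proposition~\ref{qstosphere} (which in turn rests on the hypothesis $\Lambda=(\deg f)^{1/2}$) must be combined carefully so that Theorem~\ref{absolutecontinuous} applies in both directions.
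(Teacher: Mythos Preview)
Your proposal is correct and follows essentially the same route as the paper's proof: invoke Proposition~\ref{smallsum} to pass to a rational map $R$ with a $2$-regular visual metric, use Lemma~\ref{chordalmetirc} and Theorem~\ref{poincareinequality} to feed the identity quasisymmetry into Theorem~\ref{absolutecontinuous}, deduce that Lebesgue measure has dimension $2$ in the visual metric, and conclude via Theorem~\ref{dim}. Your justification of $\dim(\Delta,d)=2$ is in fact slightly more explicit than the paper's.
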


\begin{proof}
By Proposition \ref{smallsum}, there exists a rational function $R$ conjugate to $f$, and $R$ has a visual metric $d$ with expansion factor $\Lambda=\deg(f)^{1/2}$ such that $(\S^2,d)$ is Ahlfors $2$-regular. Applying Lemma \ref{chordalmetirc} to the rational map $R$, the identity map ${\rm id} \: (\widehat{\C},d)\ra (\widehat{\C},\delta)$ is a quasisymmetry, where $\delta$ is the chordal metric.

The standard Riemann sphere with chordal metric $(\widehat{\C},\delta)$ satisfies a $(1,1)$-Poincar\'e inequality, with $p=1$ and $Q=2$ by Theorem \ref{poincareinequality}. By Lemma \ref{absolutecontinuous}, the (normalized) Hausdorff measure $H_{d}$ of the metric $d$ and the (normalized) Hausdorff measure $H_{\delta}$ of the metric $\delta$ are mutually absolutely continuous with each other. This implies that a set $E\subset \S^2$ has full measure under $H_{\delta}$ if and only if $E$ has full measure under $H_{d}$.

The dimension of the Lebesgue measure $\Delta$ (i.e., the normalized spherical measure of $\widehat\C$) with respect to the metric $d$ is
\begin{eqnarray*}
\dim (\Delta, d) &=& \inf\{\dim_{H_d}(E)\: \Delta(E)=1\} \\
                  &=& \inf\{\dim_{H_d}(E)\: H_{\delta}(E)=1\}\\
                  &=& \inf\{\dim_{H_d}(E)\: H_{d}(E)=1\}=2.
\end{eqnarray*}
Theorem \ref{dim} says that the dimension $\dim (\Delta, d)$ of the Lebesgue measure $\Delta$ with respect to the metric $d$ is equal to 2 if and only if $R$ is a Latt\`es map. Hence, $R$ is a Latt\`es map and $f$ is topologically conjugate to a Latt\`es map.
\end{proof}

\begin{eg} \label{latteseg3}
Recall the Latt\`es-type maps $f$ in Example \ref{latteseg} and $g$ in Example \ref{nonlatteseg}. Let Jordan curves $\mathcal C$ and $\mathcal C'$ be the same as described in Example \ref{latteseg2}. Then
\[D_n(f,\mathcal C)=2^n=\deg(f)^{n/2}\] and
\[D_n(g,\mathcal C')=2^n< 6^{n/2}=\deg(g)^{n/2}\]
for all $n>0$. By Theorem \ref{suff}, the map $f$ is topologically conjugate to a Latt\`es map while $g$ is not.
\end{eg}

In the proof of Theorem \ref{suff}, we also proved that
\begin{cor} \label{visuallattes}
Let $f\:\S^2\ra \S^2$ be an expanding Thurston map with no periodic critical points. If there exists a visual metric on $f$ with expansion factor $\Lambda=\deg(f)^{1/2}$,  then $f$ is topologically conjugate to a Latt\`es map.
\end{cor}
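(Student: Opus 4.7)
The corollary is essentially a repackaging of the final part of the proof of Theorem \ref{suff}: examining that proof shows the three hypotheses of Theorem \ref{main} enter only through Proposition \ref{existenceofvisualmetric}, whose output is a visual metric with expansion factor $\Lambda=(\deg f)^{1/2}$. Since such a metric is now supplied by hypothesis, I would skip that construction and run the remaining machinery verbatim.

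First I would note that, with $\Lambda=(\deg f)^{1/2}$, Proposition 19.10 in \cite{BMExpanding} gives $(\S^2,d)$ Ahlfors $Q$-regular with $Q=\log(\deg f)/\log \Lambda =2$. Together with the linear local connectivity of $(\S^2,d)$ (Proposition 16.3(iii) in \cite{BMExpanding}) and the Bonk-Kleiner characterization of the quasisymmetric Riemann sphere, this identifies $(\S^2,d)$ with $\widehat{\C}$ up to quasisymmetry, exactly as in Proposition \ref{qstosphere}. The Bonk-Meyer rationality criterion (Theorem 1.7 in \cite{BMExpanding}) then yields a rational map $R\:\widehat{\C}\ra\widehat{\C}$ and a homeomorphism $\phi$ with $\phi\circ f=R\circ\phi$. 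Transporting $d$ across $\phi$ produces a visual metric for $R$ with the same expansion factor $\Lambda$, and Ahlfors $2$-regularity is preserved since $\phi$ is an isometry for this transported metric (this is exactly the discussion following Proposition \ref{qstosphere}).

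Second I would apply Corollary 18.4 in \cite{BMExpanding} (recorded here as Lemma \ref{chordalmetirc}) to conclude that the identity $\mathrm{id}\:(\widehat{\C},d)\ra(\widehat{\C},\delta)$ is a quasisymmetry. Since the round sphere $(\widehat{\C},\delta)$ is $2$-regular and linearly locally contractible, Theorem \ref{poincareinequality} furnishes a $(1,p)$-Poincar\'e inequality on it for any $p\geq 1$, in particular for some $p<2=Q$. Theorem \ref{absolutecontinuous} of Heinonen-Koskela then forces the Hausdorff $2$-measures $H_d$ and $H_\delta$ to be mutually absolutely continuous. Because the normalized spherical (Lebesgue) measure $\Delta$ is comparable to $H_\delta$, a set has full $\Delta$-measure iff it has full $H_d$-measure, so the dimension computation
\[\dim(\Delta,d)=\inf\{\dim_{H_d}(E)\:\Delta(E)=1\}=\inf\{\dim_{H_d}(E)\:H_d(E)=1\}=2\]
goes through unchanged. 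Theorem \ref{dim} then identifies $R$ as a Latt\`es map, and conjugacy transfers this to $f$.

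\textbf{Expected obstacle.} There is no genuinely new obstacle; the entire corollary amounts to the observation that, in the proof of Theorem \ref{suff}, the combinatorial growth hypothesis on $D_n$ is consumed only to produce the visual metric with the sharp expansion factor. The one point requiring minor care is the transport of structure along $\phi$: the visual-metric property, the expansion factor $\Lambda$, and $2$-regularity must all pass from $(\S^2,d)$ to $(\widehat{\C},d\circ(\phi^{-1}\times\phi^{-1}))$, which follows because tiles and the combinatorial quantity $m_{f,\mathcal{C}}$ are preserved under topological conjugacy, as already recorded just before Proposition \ref{smallsum}.
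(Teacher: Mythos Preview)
Your proposal is correct and follows exactly the route the paper intends: the corollary is stated immediately after Theorem \ref{suff} with the remark ``In the proof of Theorem \ref{suff}, we also proved that,'' and you have correctly unpacked this by observing that the $D_n$ hypothesis enters only through Proposition \ref{existenceofvisualmetric} to manufacture the visual metric, after which Propositions \ref{qstosphere} and \ref{smallsum}, Lemma \ref{chordalmetirc}, Theorems \ref{absolutecontinuous} and \ref{poincareinequality}, and Theorem \ref{dim} run verbatim. There is no substantive difference between your argument and the paper's.
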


\newpage

\section{Conclusion}

\noindent
We get the following topological characterization of Latt\`es maps:

\begin{thm} \label{main}
A map $f\:\S^2\ra \S^2$ is topologically conjugate to a Latt\`es map if and only if the following conditions hold:
\begin{itemize}
  \item $f$ is an expanding Thurston map;
  \item $f$ has no periodic critical points;
  \item there exists $c>0$, such that $D_n\geq c(\deg f)^{n/2}$ for all $n>0$.
\end{itemize}
\end{thm}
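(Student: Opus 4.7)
The plan is to observe that Theorem \ref{main} is essentially the synthesis of results already established in the paper, and prove each direction by citing the appropriate machinery. First I would split the statement into its two implications: the necessity of the three conditions for a Latt\`es map, and their sufficiency.

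For necessity, suppose $f$ is topologically conjugate to a Latt\`es map $g$. By Proposition \ref{lattesexpanding}, every Latt\`es-type map (in particular every Latt\`es map) is an expanding Thurston map, and expansion is preserved under topological conjugacy, so $f$ is an expanding Thurston map. Since $g$ is a rational Latt\`es map, its Julia set is the whole sphere and hence it has no attracting cycles; in particular no critical point can be periodic, and this property is preserved under topological conjugacy, so $f$ has no periodic critical points. Finally, Proposition \ref{opinequality} gives a constant $c_g>0$ and a Jordan curve $\mathcal C_g$ with $D_n(g,\mathcal C_g)\geq c_g(\deg g)^{n/2}$ for all $n>0$; pushing $\mathcal C_g$ back under the conjugating homeomorphism and using the invariance lemma stated just after Lemma \ref{dn} (together with $\deg f=\deg g$) yields the desired bound $D_n(f,\mathcal C)\geq c(\deg f)^{n/2}$ for some $c>0$ and some curve $\mathcal C\supseteq \post(f)$; by Lemma \ref{dn}, this then holds (with a different constant) for every such Jordan curve.

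For sufficiency, the three hypotheses are exactly the hypotheses of Theorem \ref{suff}, which asserts that an expanding Thurston map with no periodic critical points satisfying $D_n\geq c(\deg f)^{n/2}$ is topologically conjugate to a Latt\`es map. Thus Theorem \ref{suff} applies verbatim to give the conclusion.

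The main obstacle is of course the sufficiency direction, but the heavy lifting has already been done: Proposition \ref{existenceofvisualmetric} produces a visual metric of expansion factor $(\deg f)^{1/2}$; Proposition \ref{qstosphere} upgrades this to Ahlfors $2$-regularity and a quasisymmetric equivalence with $\widehat{\mathbb C}$; Proposition \ref{smallsum} uses the Bonk--Meyer theorem to replace $f$ by a topologically conjugate rational map $R$; and finally the combination of Lemma \ref{chordalmetirc}, Theorem \ref{poincareinequality}, and Theorem \ref{absolutecontinuous} forces the Lebesgue measure to have dimension $2$ with respect to the visual metric, so that Theorem \ref{dim} of Meyer identifies $R$ as a Latt\`es map. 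Consequently, the proof of Theorem \ref{main} itself amounts to assembling these ingredients, with no additional technical difficulty beyond bookkeeping the topological-conjugacy invariance of each of the three conditions.
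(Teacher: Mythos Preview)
Your proposal is correct and follows essentially the same approach as the paper: necessity is reduced to checking the three conditions for an actual Latt\`es map (via Proposition \ref{lattesexpanding}, the absence of periodic critical points, and Proposition \ref{opinequality}) together with invariance under topological conjugacy, while sufficiency is a direct citation of Theorem \ref{suff}. Your additional justification for the absence of periodic critical points via $J(g)=\widehat{\C}$ and your sketch of the chain Proposition \ref{existenceofvisualmetric} $\to$ Proposition \ref{qstosphere} $\to$ Proposition \ref{smallsum} $\to$ Theorem \ref{dim} are accurate elaborations of steps the paper's proof leaves implicit or delegates to Theorem \ref{suff}.
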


\begin{proof}
Since all three conditions are preserved under topological conjugacy, we only need to check them for Latt\`es maps. If $f$ is a Latt\`es map, then $f$ is an expanding Thurston map without periodic critical points. In addition, by
Proposition \ref{opinequality}, there exists $c>0$, such that $D_n\geq c(\deg f)^{n/2}$ for all $n>0$.

The sufficiency of the three conditions follows from Theorem \ref{suff}.
\end{proof}

\begin{cor} \label{cormain}
A map $f\:\S^2\ra \S^2$ is topologically conjugate to a Latt\`es map if and only if the followings conditions hold:
\begin{itemize}
  \item $f$ is an expanding Thurston map;
  \item $f$ has no periodic critical points;
  \item there exists a visual metric on $\S^2$ with respect to $f$ with expansion factor $\Lambda=\deg(f)^{1/2}$.
\end{itemize}
\end{cor}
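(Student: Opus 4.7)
The plan is to derive Corollary \ref{cormain} as a bookkeeping consequence of Theorem \ref{main}, using the equivalence (for expanding Thurston maps without periodic critical points) between the $D_n$-lower bound $D_n \geq c(\deg f)^{n/2}$ and the existence of a visual metric with expansion factor $\Lambda=(\deg f)^{1/2}$ that is already encoded in Proposition \ref{existenceofvisualmetric} and Corollary \ref{visuallattes}.

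For the necessity direction, I would assume that $f$ is topologically conjugate to a Latt\`es map. Proposition \ref{lattesexpanding} gives that $f$ is an expanding Thurston map. To see that $f$ has no periodic critical points, I would argue from the parabolic orbifold structure: by Lemma \ref{noofpost} the orbifold $O_f$ is of type $(2,2,2,2)$, $(2,3,6)$, $(2,4,4)$ or $(3,3,3)$, so $\nu_f \leq 6$ everywhere; iterating the functional equation $\nu(f(p))=\deg_f(p)\,\nu(p)$ along a periodic orbit $p, f(p),\ldots, f^k(p)=p$ gives $\nu(p)=\deg_{f^k}(p)\,\nu(p)$ with $\nu(p)$ finite, hence $\deg_{f^k}(p)=1$, ruling out critical periodic orbits. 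Both properties are invariants of topological conjugacy. Theorem \ref{main} then supplies $c>0$ with $D_n\geq c(\deg f)^{n/2}$, and Proposition \ref{existenceofvisualmetric} converts this lower bound into the desired visual metric with expansion factor $(\deg f)^{1/2}$.

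For the sufficiency direction, assume the three conditions in Corollary \ref{cormain}. These are precisely the hypotheses of Corollary \ref{visuallattes}, which was extracted in Section \ref{sufficiency} from the proof of Theorem \ref{suff}: the argument there uses only the existence of a visual metric with expansion factor $(\deg f)^{1/2}$ (to pass through Ahlfors $2$-regularity, quasisymmetric equivalence with $\widehat{\C}$, absolute continuity of Hausdorff measures, and finally Theorem \ref{dim}). Thus $f$ is topologically conjugate to a Latt\`es map.

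No new technical obstacle arises; all substantive content is imported from earlier sections. The only point worth flagging is the self-contained verification that Latt\`es maps carry no periodic critical points, which we handle by the orbifold computation above rather than quoting hyperbolicity of Latt\`es maps externally.
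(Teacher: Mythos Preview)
Your proposal is correct and follows essentially the same route as the paper: sufficiency via Corollary \ref{visuallattes}, and necessity by invoking Theorem \ref{main} to get the $D_n$-lower bound and then Proposition \ref{existenceofvisualmetric} to produce the visual metric. The only difference is that you supply a self-contained orbifold argument for the absence of periodic critical points, whereas the paper simply quotes this fact from the proof of Theorem \ref{main}.
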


\begin{proof}
The sufficiency of these conditions follows directly from Corollary \ref{visuallattes}.

If $f$ is topologically conjugate to a Latt\`es map, then $f$ satisfies the three conditions in Theorem \ref{main}. By Proposition \ref{existenceofvisualmetric}, there exists a visual metric on $\S^2$ with expansion factor $\Lambda=\deg(f)^{1/2}$.
\end{proof}

\newpage

\bibliographystyle{amsplain}
\bibliography{qian}

\end{document}